\numberwithin{equation}{section}
\theoremstyle{plain}
\newtheorem{theorem}{Theorem}[section]
\newtheorem{proposition}{Proposition}[section]
\newtheorem{remark}{Remark}[section]
\newtheorem{lemma}{Lemma}[section]
\newtheorem{corollary}{Corollary}[section]
\newcommand{\R}{\mathbb{R}}
\newcommand{\1}{\mathbb{I}}
\renewcommand{\H}{{\cal{H}}}
\newcommand{\finv}{q}
\newcommand{\K}{{\mathcal KL}}
\newcommand{\KL}{{\mathcal KL}}
\begin{document}

\begin{frontmatter}
\title{Adaptive density estimation based \\ on a mixture of Gammas} 
\runtitle{Density estimation based on a mixture of Gammas}


\author{\fnms{Natalia} \snm{Bochkina}\ead[label=e1]{N.Bochkina@ed.ac.uk}}
\address{University of Edinburgh and Maxwell Institute, United Kingdom \\ \printead{e1}}
\and
\author{\fnms{Judith} \snm{Rousseau}\ead[label=e2]{Rousseau@ceremade.dauphine.fr}}
\address{CEREMADE (Universit\'e Paris Dauphine) and CREST (INSEE, Paris), France \\ \printead{e2}}


\runauthor{N.Bochkina and J.Rousseau}


\begin{abstract}
We consider the problem of Bayesian density estimation on the positive semiline for possibly unbounded densities.
We propose a hierarchical Bayesian estimator based on the gamma mixture prior which can be viewed as a location mixture.
We study convergence rates of Bayesian density estimators  based on such mixtures.
We construct approximations of the local H\"older densities, and of their extension to unbounded densities, to be continuous mixtures
of gamma distributions, leading to approximations of such densities by finite mixtures. These results are then used to derive
posterior concentration rates, with priors based on these mixture models.
The rates are minimax (up to a log n term) and since the priors are independent of the smoothness, the rates are adaptive to the smoothness.

One of the novel feature of the paper is that these results hold for densities with polynomial tails. Similar results are obtained using a hierarchical Bayesian model based on the mixture of inverse gamma densities which can be used to estimate adaptively densities with very heavy tails, including Cauchy density.

\end{abstract}

\begin{keyword}[class=MSC]
\kwd[Primary ]{62G07}
\kwd[; secondary ]{62G20}
\end{keyword}


\begin{keyword}
\kwd{adaptive estimation}
\kwd{Bayesian nonparametric estimation}
\kwd{density estimation}
\kwd{Dirichlet process}
\kwd{local H\"older class}
\kwd{mixture prior}
\kwd{rate of contraction}
\kwd{unbounded density}
\end{keyword}
\tableofcontents
\end{frontmatter}

\section{Introduction}
\subsection{ Context : posterior concentration rates in Bayesian nonparametric mixture models }

Nonparametric density estimation using Bayesian models with a mixture prior distribution has been used extensively in practice due to their flexibility and available computational techniques using MCMC. In some cases their theoretical properties have been studied, and in particular the asymptotic behaviour of the associated posterior distribution.
Posterior weak consistency has been studied quite systematically in particular by \citep{ghosal:wu}, but posterior concentration rates have been derived only for a small number of kernels.
In the case of density estimation on [0,1] (or any compact interval of $\R$) \citep{rousseau:09} has studied mixtures of Beta densities, and \citep{mcvinishetal:09} have considered mixtures of triangular densities, Gaussian location mixtures have been considered by \citep{ghosal:vdv:01, ghosal:vdv:mixture:07, kruijer:rousseau:vdv:10, ghosal:shen:tokdar, scricciolo:12} and, more generally, power exponential kernels by \citep{kruijer:rousseau:vdv:10,scricciolo:11}. Location scale mixtures have been considered also by \citep{canale:deblasi}. Apart from the latter paper, the posterior concentration rates have been obtained by the above authors are equal to the minimax estimation rate (up to a $\log n$ term) over some collections of functional classes, showing that nonparametric mixture models are not only flexible prior models, but they also lead to optimal procedures, in the frequentist sense.

The above results do not specifically address estimation of  densities on $\R^+ $, they do not cover fat tail densities and the posterior concentration rates have been obtained only under the condition that the densities are uniformly bounded.

In this paper we propose to estimate a possibly unbounded density supported on the positive semiline via a Bayesian approach using a Dirichlet Process mixture of Gamma densities as a prior distribution. The proposed prior distribution does not depend on regularity properties of the unknown density (such as the H\"older exponent) so the resulting posterior estimates are adaptive. Bayesian Gamma mixtures are widely used in practice, for instance, for pattern recognition \citep{GammaMixturesPatternRecogn2} and for modelling the signal-to-noise ratio in wireless channels \citep{GammaMixturesWireless}. An algorithm for implementing a Gamma mixture with unknown number of components as well as aspects of the practical application of this model is given in \cite{GammaMixturesRuggeri2001}.

The main purpose of the paper is to derive the conditions on the Gamma mixture prior  so that the posterior distribution asymptotically concentrates at the optimal rate (up to a log factor) around the true density over smooth classes of densities.
We derive the concentration rate of the posterior distribution when the unknown density belongs to a local H\"older class on $(0,\infty)$ (see the formal definition below) adapting the techniques applied in \citet{ghosal:shen:tokdar}, \citet{rousseau:09} and \citet{kruijer:rousseau:vdv:10} to the proposed mixture of Gamma densities. In particular, we will show that this mixture provides a good approximation for such functions.
Secondly, we investigate the concentration  rate of this posterior distribution for an unknown density on $(0,\infty)$ that can be unbounded at 0, namely   for a density $x^{\alpha-1}h(x)$ for $\alpha \in (0,1)$ in a neighbourhood of 0 and for function $h$ belonging to a locally H\"older class on $(0,\infty)$. A typical example of such behaviour is a Gamma density with the shape parameter between 0 and 1.
One of the novel feature of the paper is that these results hold for densities with polynomial tails. Similar results are obtained for a hierarchical
 Bayesian model based on the mixture of inverse gamma densities which can be used to  estimate adaptively densities with very heavy tails, including the Cauchy density.

For a bounded density, we use the lower bound on the rate of convergence for  estimators of  densities from the local H\"older class $\H(\beta, L)$  which is $n^{-\beta/(2\beta+1)}$ \citep{MM13}.

The paper is organised as follows. In Section~\ref{sec:mainresult} we define the prior distribution and study the concentration rate of the corresponding posterior distribution over an extension of the local H\"older class to possibly unbounded densities. We also discuss the choice of the base measure of the Dirichlet process prior as well as the hyperprior measure on the shape parameter of the Gamma distribution that lead to consistent estimation with the posterior concentration rate equal to the minimax optimal rate of convergence up to a log factor. The prior model based on the mixtures of inverse gammas and the corresponding posterior concentration result are given in Section~\ref{sec:mixtureIG}.  Numerical performance of the estimator is studied on simulated data for bounded and unbounded densities and on real data, with the results presented in Section~\ref{sec:Applications}. The proof of the main result is given in Section~\ref{sec:Proofs}, and the proofs of the auxiliary results are deferred to the appendix.

\subsection{ Setup and Notation}
 Throughout the paper we assume that $X^n = (X_1, \cdots, X_n)$ is an $n$-sample from a distribution with density $f$ on $\mathcal R^+$ with respect to Lebesgue measure. We denote by  $\mathcal F = \{ f \in L_1(\R^+); \, f : \R^+ \rightarrow \R^+; \,\int_{\R^+} f(x) dx = 1 \}$ with $L_1(\R^+)$ denoting the set of measurable and integrable functions on $\R^+$.

 The aim is to estimate the unknown density $f \in \mathcal F$ which we do using a Bayesian approach.
  We construct a prior probability on $\mathcal F$, by modelling $f$ as a mixture of Gamma densities, see \eqref{mixture}  below.
The associated posterior distribution is denoted by $\Pi( \cdot | X^n) $. Let $f_0$ be the true density of the $X_i$'s  and we are interested in determining  the posterior concentration rate $\varepsilon_n = o(1) $ defined by
 \begin{equation*}
 \Pi( B_{\epsilon_n} | X^n )  = 1 + o_{P_{f_0}}(1), \quad B_{\epsilon_n} = \{ f:\, \|f_0 - f \|_1 \leq \varepsilon_n\},
 \end{equation*}
 where $\| \cdot \|_1$ is the $L_1$ norm.

 We denote by $\K(f_1, f_2)$ the Kullback-Leibler divergence between $f_1$ and $f_2$ and
 by $V(f_1, f_2)$ the variance of log-densities ratio:
 \begin{eqnarray*}
 \K(f_1, f_2) &=& \int_{0}^\infty  \log \left(\frac{f_1(x)}{f_2(x)}\right) f_1(x) dx, \\
 V(f_1, f_2) &=& \int_{0}^\infty \left[ \log \left(\frac{f_1(x)}{f_2(x)}\right)\right]^2 f_1(x)dx - \K(f_1, f_2)^2
 \end{eqnarray*}
 and the square of the Hellinger distance by $$D^2_H(f_1, f_2) = \int_{\R^+} (\sqrt{f_1}- \sqrt{f_2})^2(x) dx.$$

Throughout the paper $f(\cdot) \gtrsim g(\cdot)$ (resp. $ f(\cdot) \lesssim g(\cdot) $) means that there exists a positive constant $C$ such that $f(\cdot) \geq C g(\cdot) $ (resp. $f(\cdot) \leq C g(\cdot)) $ and $f(\cdot) \asymp g(\cdot) $ means that $g(\cdot) \lesssim f(\cdot) \lesssim g(\cdot)$.

In the following Section we present the main results of the paper.

\section{Main results}\label{sec:mainresult}

We start with description of the mixture of Gamma distributions which underpins the construction of our prior model on $\mathcal F$.

 \subsection{Prior model : mixtures of Gamma distributions}\label{sec:prior}

We consider the following Gamma mixture types of models:
\begin{equation} \label{mixture}
f_{P, z }(x) = \int_0^\infty g_{z,\epsilon}(x) dP(\epsilon), \quad g_{z,\epsilon}(x) = x^{z-1}e^{ -zx/\epsilon} \left( \frac{ z}{ \epsilon }\right)^{ z } \frac{ 1 }{ \Gamma(z) }.
\end{equation}
We consider $( P, z) \sim \Pi = \Pi_1 \otimes \Pi_z$, where $\Pi_1$ is a probability on the set of discrete distributions over $\R^+$ an $\Pi_z$ is a probability distribution on $\R^{+} = [0,  + \infty)$. We also denote $\R^{+*} = (0, +\infty )$.

Hence the densities are represented by \textit{location} Gamma mixtures, since in the above parametrization $\epsilon$ is the mean of the Gamma distribution with parameters $(z, z/\epsilon)$. This particular parametrization leads to the variance equal to $\epsilon^2 /z$, and as $z$ goes to infinity, the Gamma distribution $(z, z/\epsilon)$ can be approximated by a Gaussian random variable with mean $\epsilon$ and variance $\epsilon^2 /z$.
This allows for precise approximation near 0 and more loose approximation in the tail. This parametrization has also been used in \citet{GammaMixturesRuggeri2001}.

The key to good approximation properties of a continuous density $f$ by the gamma mixtures defined above is
 $$ K_z f (x) \rightarrow f(x) \text{ as } z\to \infty$$
 where operator $K_z$ is defined by
 \begin{equation} \label{operator}
 K_z f(x) =   \int_0^\infty g_{z,\epsilon}(x) f(\epsilon)d\epsilon.
 \end{equation}
We explain in more details in Section \ref{sec:approx:dens}, why mixtures of Gamma distributions as proposed here are flexible models for estimating smooth densities on $\mathbb R^+$. The general idea is that, as in the case of mixtures of Beta distributions in \citet{rousseau:09}  or mixtures of Gaussian distributions  in \citet{kruijer:rousseau:vdv:10}, under regularity conditions with $f$ verifying some H\"older - type condition with regularity $\beta >0$,  one can construct a probability density $f_1$ on $\mathbb R^+$ such that
$$|K_z f_1(x) - f(x)| \lesssim z^{-\beta/2}  , \quad z \rightarrow  +\infty $$
The continuous mixture $K_z f_1$ can then be approximated by a discrete mixture with $O(\sqrt{z}\log z)$ components.

We consider discrete priors on $P$ and priors on $z$ that satisfy the following condition:

\noindent \textbf{ Condition $(\mathbf{\mathcal P})$}:
The prior on $z$, $\Pi_z$ satisfies : for some constants $c\geq  c'>0 , c_0 >0$ and $\rho_z \geq 0$,
\begin{equation}
\begin{split}
\Pi_z([x, 2x]) &\gtrsim e^{-c \sqrt{x} (\log x)^{\rho_z}}, \quad \Pi_z ([x, +\infty )]\lesssim e^{-c' \sqrt{x} (\log x)^{\rho_z}} \quad  \mbox{ as } x\rightarrow +\infty, \\
 \Pi_z( [0,x]) & \lesssim x^{c_0} \quad \text{ for } x \rightarrow  0,
\end{split}
\label{eq:PriorZ}
\end{equation}

We consider either of these two types of prior on $P$:
\begin{itemize}
\item \textbf{Dirichlet Prior} of $P$: $P\sim DP(m, G)$ where  $DP(m,G)$ denotes the Dirichlet Process with mass $m>0$ and base probability measure $G$ having  positive and continuous density $g$ on $\R^{+*}$ satisfying:
 \begin{equation} \label{cond:priordens}
  x^{a_0} \lesssim g(x) \lesssim x^{a_0'} \mbox{ as } x \rightarrow 0, \,\, \& \,\,   x^{-a_1} \lesssim g(x) \lesssim x^{-a_1'}  \mbox{ as }  x \rightarrow +\infty
  \end{equation}
  for some $-1< a_0' \leq a_0$ and $ 1 < a_1' \leq a_1$ .
\item \textbf{Finite mixture } :
\begin{gather*} P (d\epsilon) = \sum_{j=1}^K p_j \delta_{\epsilon_j}(d\epsilon) , \quad K \sim \pi_K,  \quad \epsilon_j \stackrel{iid}{\sim} G,\\
   e^{- c_1 k (\log k)^{\rho_2}}\gtrsim  \pi_K(k) \gtrsim e^{- c_2 k (\log k)^{\rho_2}} \, \forall k\geq 2,\\
 (p_1, \cdots, p_k ) \sim \mathcal D(\alpha_1, \cdots, \alpha_k),
\end{gather*}
 with $G$ satisfying \eqref{cond:priordens}, $\rho_2 \geq 0$, $0 < c_1 \leq c_2 < +\infty$ and there exists $\bar m$ such that
  $$ \sum_{i=1}^k \alpha_i \leq \bar m k , \quad \sum_{i=1}^k (-\log \alpha_i)_+ \leq \bar m k \log k\,\,  \forall k\geq 2.$$
Note that $(\alpha_j, j\leq k)$ possibly all depend on $k$, but we omit the additional index $k$ to simplify the notation. 
\end{itemize}

Condition $(\mathbf{\mathcal P})$ is quite mild. It is satisfied for instance for $\sqrt{z} $ following a $Gamma $ distribution, in which case $\rho_z=0$. The prior condition on the base measure \eqref{cond:priordens} imposes fat tails on $G$. It is satisfied for instance if
$g(x) \propto x^{a_0} ( 1 + x^2)^{-a_0- 1} $ with $a_0 >0$.

 Note that, it appears from the proofs that if $g(x) \leq e^{-c x}$ then posterior concentration rates would remain unchanged over the functional class described below, assuming that the true density $f$ also has exponential type tails. Hence, when estimating such densities, Gamma densities satisfy the conditions required for $G$  but inverse Gammas do not.

\subsection{Functional classes} \label{sec:fnclass}

In this paper we are interested in estimating densities that are possibly unbounded at 0. To construct a class of such functions over which posterior concentration rates are derived, first we need a class of bounded functions.

Let $\mathcal P ( \beta, L(\cdot), \gamma, C_0, C_1, e, \Delta ) $, $\beta>0$,  $\gamma\geq 0$ be the set of such functions  $f: \, \R^+ \to [0,\infty)$ which are $r$ times continuously differentiable with $r = \lceil \beta \rceil - 1$ and which  satisfy for all $x \in \R_+$ and $y$: $y>-x$ and $|y|\leq \Delta$,
 \begin{equation}\label{cond:taylor}
\left| f^{( r )}(x+y) - f^{( r )} (x) \right| \leq L(x) |y |^{\beta - r } ( 1 + |y|^{\gamma}), \quad f(x) \leq C_0,
\end{equation}
 defining $r_0 = \lceil \beta /2 \rceil - 1$ for $\beta >2$ and $r_0=0$ if $\beta \leq 2$,
\begin{eqnarray}\label{moment:func}
\int_0^\infty \left(\frac{ x^j |f^{(j)}(x)|  }{ f(x) } \right)^{ (2 \beta + e)/j} f(x) dx &\leq& C_1 ,\quad  j \leq r;  \notag \\
 \int_0^\infty \left(\frac{ (1  + x^{\gamma+ 2r_0}) x^\beta L(x)   }{  f(x) }\right)^{2} f(x) dx &\leq& C_1
\end{eqnarray}
for some $ e >0$. In the above definition  $L(.)$ is a  fixed positive function from $\mathbb R^+$ to $\mathbb R^+$.

We also consider classes of densities unbounded around zero that are defined as follows.
 Let  $\mathcal P_{\alpha}( \beta, L(\cdot), \gamma, C_0, C_1, e, \Delta)$, $\alpha \in (0,1]$, $\beta>0$,  $\gamma\geq 0$,  be the set of such functions $f: \,\R^+ \to [0,\infty)$ such that
 function $h(x) :=  x^{-(\alpha-1)} f(x)$ satisfies the following conditions:
 \begin{itemize}
 \item[1)] function $h$ is $r$ times continuously differentiable with $r = \lceil \beta \rceil - 1$ and such that for all $x>0$, $y>-x$ and $|y|\leq \Delta$,
     \begin{equation}\label{cond:tayloralpha}
\left| h^{( r )}(x+y) - h^{( r )} (x) \right| \leq L(x) |y |^{\beta - r } ( 1 + |y|^{\gamma}), \quad h(x) \leq C_0
\end{equation}
\item[2)] for some $ e >0$,
\begin{gather} \label{integralHolderalpha}
\int_0^\infty \left(\frac{ x^j |h^{(j)}(x)| }{ h(x) } \right)^{ (2 \beta + e)/j} x^{\alpha-1}h(x) dx \leq C_1 ,\quad  j \leq r;  \notag\\
 \int_0^\infty \left(\frac{  L(x)x^{\beta}(1+ x^{\gamma+ 2r_0}) } { h(x) }\right)^2 x^{\alpha-1}h(x) dx \leq C_1,
\end{gather}
where $r_0 = \lceil \beta /2 \rceil - 1$ for $\beta >2$ and $r_0=0$ if $\beta \leq 2$.
\end{itemize}

Note that  in the case $\alpha = 1$, we recover the first functional class, namely
$$\mathcal P_{1}( \beta, L(\cdot), \gamma,C_0, C_1, e, \Delta) = \mathcal P( \beta, L(\cdot), \gamma,C_0, C_1, e, \Delta).$$

The rationale behind the functional class  $\mathcal P_{\alpha}( \beta, L(\cdot), \gamma, C_0, C_1,e,\Delta)$  comes from the following lemma.
\begin{lemma}\label{lem:representation}
 For any $f\in \mathcal P_{\alpha}( \beta, L(\cdot), \gamma, C_0, C_1, e, \Delta )$ and $x>0$,
$$
K_z f(x) = x^{\alpha-1} K_{z+1-\alpha} h(x/C_z) (1+O_z)
$$
where $h(x) = x^{1-\alpha}f(x)$,  $C_z = 1 +(1-\alpha)/z$ and $O_z=\frac{z^{\alpha}\Gamma(z-\alpha)}{\Gamma(z)}-1=O(1/z)$ for large $z$.
\end{lemma}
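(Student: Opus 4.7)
The plan is a direct computation exploiting the self-similar structure of the Gamma kernel under the power weighting $x^{\alpha-1}$. First, I write out
\[
K_z f(x) = \frac{z^z x^{z-1}}{\Gamma(z)}\int_0^\infty \epsilon^{-z} e^{-zx/\epsilon} f(\epsilon)\,d\epsilon,
\]
and substitute $f(\epsilon)=\epsilon^{\alpha-1}h(\epsilon)$, which is the defining relation of the class $\mathcal P_\alpha(\beta,L(\cdot),\gamma,C_0,C_1,e,\Delta)$. The inner factor $\epsilon^{-z}\epsilon^{\alpha-1}=\epsilon^{-(z+1-\alpha)}=\epsilon^{-z'}$ with $z':=z+1-\alpha$ immediately suggests that the integral is, up to a constant, a kernel of index $z'$.

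The second step is to match the exponential. Since $zx=z'(x/C_z)$ with $C_z=z'/z=1+(1-\alpha)/z$, we get $e^{-zx/\epsilon}=e^{-z'(x/C_z)/\epsilon}$, and therefore
\[
\int_0^\infty \epsilon^{-z'} e^{-zx/\epsilon} h(\epsilon)\,d\epsilon
= \frac{\Gamma(z')}{(z')^{z'}(x/C_z)^{z'-1}}\,K_{z'}h(x/C_z).
\]
Combining with the prefactor of $K_zf(x)$ and using $z-z'=\alpha-1$ and $C_z^{z'-1}=(z'/z)^{z-\alpha}$, the powers of $x$ collapse to $x^{\alpha-1}$ and the numerical constant simplifies to
\[
\frac{z^z\,\Gamma(z+1-\alpha)}{(z')^{z'}\,\Gamma(z)}\,C_z^{z-\alpha}=\frac{z^\alpha\,\Gamma(z+1-\alpha)}{(z+1-\alpha)\,\Gamma(z)}=\frac{z^\alpha(z-\alpha)\,\Gamma(z-\alpha)}{(z+1-\alpha)\,\Gamma(z)},
\]
after the identity $\Gamma(z+1-\alpha)=(z-\alpha)\Gamma(z-\alpha)$.

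Finally, I invoke Stirling's expansion to see that $z^\alpha\Gamma(z-\alpha)/\Gamma(z)=1+O(1/z)$, and note that $(z-\alpha)/(z+1-\alpha)=1-1/(z+1-\alpha)=1+O(1/z)$ as well. Hence the full multiplicative factor is of the form $1+O_z$ with $O_z=O(1/z)$, consistent with the stated formula $O_z=z^\alpha\Gamma(z-\alpha)/\Gamma(z)-1$ up to the same $O(1/z)$ tolerance. The conclusion $K_zf(x)=x^{\alpha-1}K_{z+1-\alpha}h(x/C_z)(1+O_z)$ follows.

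The proof is essentially bookkeeping, so there is no real obstacle; the only step requiring mild care is tracking the three sources of $z$-dependence (the $x^{z-1}$ factor, the $C_z^{z-\alpha}$ arising from the rescaled argument, and the $\Gamma(z')/\Gamma(z)$ ratio) so that the powers of $x$ line up to produce exactly $x^{\alpha-1}$ and the remaining constant can be Stirling-expanded. The resulting identity is \emph{exact} up to the explicit constant, and the asymptotic $O(1/z)$ is uniform in $x$, which is what will be used downstream in bounding $|K_z f(x)-f(x)|$ via the unbounded approximation theory.
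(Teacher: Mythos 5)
Your proof is correct and follows essentially the same route as the paper's Lemma~\ref{lem:ApproxAlpha}: substitute $f(\epsilon)=\epsilon^{\alpha-1}h(\epsilon)$, absorb $\epsilon^{\alpha-1}$ into the kernel to lower its shape parameter to $z'=z+1-\alpha$, rescale the argument to $x/C_z$ so that $zx/\epsilon=z'(x/C_z)/\epsilon$, and Stirling-expand the residual Gamma-ratio constant. Your computed prefactor $\tfrac{z^\alpha(z-\alpha)\Gamma(z-\alpha)}{(z+1-\alpha)\Gamma(z)}$ matches the paper's intermediate form in the proof of Lemma~\ref{lem:ApproxAlpha}, and your observation that it differs from the displayed $1+O_z$ only by another multiplicative $1+O(1/z)$ factor is accurate.
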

Lemma \ref{lem:representation} is a consequence of Lemma~\ref{lem:ApproxAlpha} which is given in Appendix~\ref{sec:ProofsAdjAlpha}.

\begin{remark}\label{rk:functionalclass}
\begin{enumerate}
\item Note that in the above functional class $\mathcal P ( \beta, L(\cdot), \gamma, C_0, C_1, e, \Delta ) $, $f$ is bounded from above but is allowed converge to 0 as $x$ goes to 0. Interestingly if $f(x) = x^{\tau } h(x)$ for any $\tau \geq 0$ and $h$ satisfies \eqref{moment:func} then condition \eqref{moment:func} is satisfied by $f$.

\item  If  $\log f$ is locally H\"older, with suitable integrability conditions,  then  $f(x) \in \mathcal P ( \beta, L(\cdot), \gamma, C_0, C_1, e, \Delta ) $. Similarly, the same holds for  $h(x)$ bounded from below if $f(x)=x^{\alpha-1} h(x) \in \mathcal P_\alpha ( \beta, L(\cdot), \gamma, C_0, C_1, e, \Delta ) $.



\item The moment condition \eqref{moment:func} is satisfied for a number of densities on the positive semiline (see Appendix~\ref{sec:VerifCond} for details):
\begin{enumerate}
\item   for the Weibull distribution with density $f(x) = C_{a,b} x^{a-1} e^{- x^{b}}$, $a, b> 0$, with $\beta=b$ for $a\in (0,1]$; for $a\geq 1$, $\log f$ is H\"older with exponent $b$;
 \item for  folded Student t distribution $f(x) = c_\nu (1+x^2)^{-(\nu+1)/2} I(x>0)$ with $\nu\geq 1$;

 \item  for a Frechet-type distribution with density  $f(x)= c x^{-b-1} e^{-x^{-b}}$, $b>0$. In this case,  $\lim_{x\to 0}f(x) =0$ so we can take $\alpha=1$, $\gamma=1$ and $\beta \leq 1$.
\end{enumerate}

\end{enumerate}

\end{remark}

\subsection{Posterior concentration rates}

\subsubsection{Posterior concentration rate for the mixture of Gammas} \label{sec:post}

Similarly to location  mixtures of Gaussian densities, mixtures of Gamma densities provide a flexible tool to approximate smooth  densities on $\R^+$, and using the representation of Lemma \ref{lem:representation}, to approximate smooth but possibly unbounded densities. The posterior concentration rates are presented in the following Theorem.
In addition to the regularity conditions induced by the functional classes defined above we will need the following tail assumption :
 \begin{equation} \label{tailcond}
 \exists \rho_1 >0 \, \& \, C_2 >0: \,\, \int_{x}^\infty y^2 f_0(y) dy  \leq C_2 (1+x)^{-\rho_1}.
  \end{equation}
We denote by $\mathcal T(\rho_1, C_2)$ the set of densities satisying \eqref{tailcond}.

\begin{theorem}\label{th:rate}
Consider the prior defined in Section \ref{sec:prior} and
assume that $\mathbf{X}^n = (X_1, \cdots, X_n)$ is a sample of independent observations identically distributed according to a probability $P_0$ on $\R^+$ having density  $f_0$ with respect to Lebesgue measure.

Then, for any  $\alpha_0>0$, $\beta_1\geq\beta_0>0$, $L(\cdot)$, $\gamma$,  $C_0$, $C_1$,  $e$, $\Delta$, $C_2$ and $\rho_1$ , there exists $M>0$ such that
\begin{equation} \label{eq:rate}
\sup_{\alpha \in [\alpha_0,1]} \sup_{\beta \in [\beta_0,\beta_1]} \sup_{f_0 \in  \mathcal Q_{\alpha,\beta}(\cdots)} E_{P_0}\left[ \Pi\left( \| f - f_0\|_1 > M \frac{(\log n)^{q}}{n^{\beta/(2\beta+1)}}| \mathbf{X}^n \right) \right] = o(1)
\end{equation}
with $q= (5 \beta + 1)/(4\beta+2)$ if $\rho_z \leq 5/2$ and $q = ( 2 \rho_z \beta +1)/(4\beta+2)$ if $\rho_z > 5/2$ and
$$\mathcal Q_{\alpha,\beta} (\cdots) =  \mathcal P_{\alpha}( \beta, L(\cdot), \gamma,  C_0, C_1, e, \Delta) \cap \mathcal T(\rho_1, C_2).$$
\end{theorem}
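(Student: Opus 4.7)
The plan is to apply the standard Ghosal--Ghosh--van der Vaart posterior concentration machinery, which reduces \eqref{eq:rate} to three ingredients: (i) a Kullback--Leibler prior mass lower bound $\Pi(B_{\K}(f_0,\tilde\epsilon_n))\gtrsim e^{-n\tilde\epsilon_n^2}$ for some $\tilde\epsilon_n \asymp n^{-\beta/(2\beta+1)}(\log n)^{q'}$; (ii) a sieve $\mathcal{F}_n\subset\mathcal{F}$ with metric entropy $\log N(\epsilon_n,\mathcal{F}_n,\|\cdot\|_1)\lesssim n\epsilon_n^2$; and (iii) an exponential prior-mass bound $\Pi(\mathcal{F}_n^c)\lesssim e^{-(C+4)n\epsilon_n^2}$. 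The tests required for Hellinger separation then follow from standard constructions. The rate $\epsilon_n = n^{-\beta/(2\beta+1)}(\log n)^q$ is forced by balancing the bias of a continuous Gamma mixture approximation against the complexity cost of a discrete finite-mixture approximation.

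The key step is the approximation of $f_0$ by a Gamma mixture, handled in two layers. If $\alpha<1$, Lemma~\ref{lem:representation} reduces the problem to approximating the bounded function $h(x)=x^{1-\alpha}f_0(x)$, up to a factor $x^{\alpha-1}$ and a multiplicative $1+O(1/z)$ correction. For $h$ satisfying \eqref{cond:tayloralpha}--\eqref{integralHolderalpha}, I would construct, following the strategy of \citet{rousseau:09} and \citet{kruijer:rousseau:vdv:10}, a density $f_1$ on $\R^+$ obtained by subtracting a polynomial correction in $z^{-1/2},\ldots,z^{-r_0}$ from $h$ such that $|K_z f_1 - h|(x)\lesssim z^{-\beta/2}R(x)$ with a remainder $R$ integrable against $h$ in the appropriate $(2\beta+e)/j$-moments of \eqref{integralHolderalpha}. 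Choosing $z_n$ so that $z_n^{-\beta/2}\asymp \epsilon_n$, i.e.\ $z_n\asymp n^{2/(2\beta+1)}(\log n)^{-2q/\beta}$, this yields $K_{z_n}f_1\approx f_0$ in both $\|\cdot\|_1$ and relevant KL-type divergences, after combining with the tail assumption \eqref{tailcond} to control the contribution of $\{x> T_n\}$ for $T_n$ a polylogarithmic truncation point. A further discretization, approximating $K_{z_n}f_1$ by a finite Gamma mixture $f_{P_n^\ast,z_n}$ supported on $N_n\asymp \sqrt{z_n}\log z_n$ atoms in $[\epsilon_\ell,\epsilon_u]$, is handled via Gaussian-type comparison at the heart of the location-mixture construction.

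For the prior mass lower bound, I would adapt the Dirichlet process calculations of \citet{ghosal:shen:tokdar}: the probability that $P\sim DP(m,G)$ puts $\epsilon_n^{2\beta+2}$-approximating mass on $N_n$ specified cells is bounded below by $e^{-C N_n\log(1/\epsilon_n)}$, and the tail condition \eqref{cond:priordens} gives enough mass to the support region. Combining with condition \eqref{eq:PriorZ}, which supplies $\Pi_z([z_n,2z_n])\gtrsim e^{-c\sqrt{z_n}(\log z_n)^{\rho_z}}$ and $N_n\log(1/\epsilon_n)\asymp \sqrt{z_n}(\log n)^{3/2}$, yields the required $e^{-n\tilde\epsilon_n^2}$ lower bound exactly when $q$ is the exponent stated in the theorem; the two regimes $\rho_z\lessgtr 5/2$ reflect whether the approximation logs or the $\Pi_z$-prior logs dominate. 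For the finite-mixture prior the argument is analogous using the hypotheses on $\pi_K$ and the Dirichlet $(\alpha_1,\ldots,\alpha_k)$. The sieve is taken as $\mathcal{F}_n = \{f_{P,z}: z\leq Z_n,\ P([\epsilon_\ell,\epsilon_u]^c)\leq \epsilon_n\}$ (respectively with $K\leq K_n$ in the finite case), with $Z_n$ chosen so that $\Pi_z(z>Z_n)$ is exponentially small; the entropy bound then follows from the standard Lipschitz property of $(z,\epsilon)\mapsto g_{z,\epsilon}$ on compact sets and a net on discrete mixing measures of size $\sqrt{Z_n}\log Z_n$.

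The main obstacle is the construction of the inverting polynomial correction defining $f_1$ so that $|K_z f_1 - h|\lesssim z^{-\beta/2}$ holds with an $x$-dependent constant that is integrable against the moment weights in \eqref{integralHolderalpha}; this is where the somewhat intricate assumption \eqref{moment:func} is used in full, and where the analysis differs genuinely from the Gaussian case because the Gamma kernel $g_{z,\epsilon}$ is asymmetric in $x$ and its scale $\epsilon/\sqrt z$ varies with the location. A secondary difficulty is propagating the $x^{\alpha-1}$ factor from Lemma~\ref{lem:representation} cleanly through the KL and $L^1$ bounds without losing powers of $\log n$, which requires the refined truncation near $0$ controlled by the exponent $a_0$ in \eqref{cond:priordens} and by the hypothesis $\alpha\geq\alpha_0>0$.
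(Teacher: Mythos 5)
Your proposal follows essentially the same route as the paper: verify the Ghosal--Ghosh--van der Vaart conditions, with the prior-mass lower bound obtained by approximating $f_0$ (via Lemma~\ref{lem:representation} and the polynomial-corrected density $f_1$ of Proposition~\ref{lem:approx:1}) first by a continuous Gamma mixture and then by a finite discrete one (Lemmas~\ref{lem:discrete1}--\ref{lem:KL:discrete2}), and with the entropy of a truncated-mixture sieve controlled as in Lemma~\ref{prop:Entropy}. The only slip worth flagging is a log-exponent: the discretization in Lemma~\ref{lem:discrete1} produces $O(\sqrt{z}(\log z)^{3/2})$ atoms, not $O(\sqrt{z}\log z)$, and this extra half power is precisely what generates the $5/2$ threshold in $\rho_z$; your sieve description should also carry the stick-breaking truncation $\sum_{j>J}\pi_j<\varepsilon$ and a lower bound on $z$, but otherwise the strategy --- including the reduction to $h=x^{1-\alpha}f_0$ and the role of the tail condition --- matches the paper's proof.
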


Theorem \ref{th:rate} is proved in Section \ref{sec:pr:thrate}.

Note that in \eqref{eq:rate}, $f_0$ is supposed to satisfy \eqref{tailcond} so that the supremum in $f_0$ is taken over the intersection of

Note 	also that the tail assumption \eqref{tailcond}  is much weaker than  the tail condition considered for mixtures of Gaussians as in \citep{kruijer:rousseau:vdv:10} or \citet{ghosal:shen:tokdar} where exponential decay in the form $e^{- c |x|^\tau}$ is assumed for some $\tau >0$. Because the posterior concentration rates obtained in \citep{kruijer:rousseau:vdv:10} or \citet{ghosal:shen:tokdar} are upper bounds, it is not clear that the exponential tail condition was necessary. It seems however that because Gamma densities have fatter tails than Gaussian densities, they allow for approximations of densities with fatter tails.

The theorem is proved following the approach of \citet{ghosal:ghosh:vdv:00}, so that first we construct an approximation of the true density $f_0$ by a continuous mixture of Gammas in the form $K_zf_1$ for some density $f_1$ close to $f_0$ and then approximate the continuous mixture by a discrete and finite mixture. This allows us to control the prior mass of Kullback-Leibler neighbourhoods. The tail condition \eqref{tailcond} is used in this second step, similarly as the exponential tail condition used with Gaussian mixtures in  \citep{kruijer:rousseau:vdv:10} or \citet{ghosal:shen:tokdar}. It is to be noted however that the integrability conditions \eqref{integralHolderalpha} in the  definition of the functional classes $\mathcal P_\alpha ( . ) $ implicitly  induce some tail constraints on $f_0$, see  Remark  \ref{rk:functionalclass}.

\subsubsection{Mixture of inverse Gamma distributions}\label{sec:mixtureIG}

Although \eqref{tailcond} is a rather mild condition, it excludes fat tail distributions such as the folded Cauchy density whose density at infinity behaves like $x^{-2} $. The Bayesian model proposed here can be adapted to estimating fat tail densities at infinity as follows.

Let  $X_i \sim f_0$ with $f_0(x) \lesssim x^{a+1}$ for small $x$ and some $a>0$ and $f_0(x) \lesssim x^{-b-1}$ for some $0< b\leq 2$ when $x$ goes to infinity. Then the density of $1/X_i$, $\finv_0(y) =y^{-2} f_0(1/y) \lesssim y^{-a-1}$ when $y$ goes to infinity and $\finv_0(y) \lesssim y^{b-1}$ when $y$ goes to 0. Hence $\finv_0$ satisfies the tail conditions both at 0 and infinity assumed in Theorem~\ref{th:rate}.
 Since $||\finv-\finv_0||_1 = ||f-f_0||_1$, Theorem~\ref{th:rate} implies that density $\finv(x)$ can be estimated using the appropriately adapted prior, such that the corresponding prior on $f$ satisfies the conditions stated in the theorem, with the same rate of convergence.

The prior for estimating $f(x) = x^{-2} \finv(1/x)$ is a mixture of inverse Gammas:
\begin{equation} \label{mixtureIG}
 f_{P, z }(x) = \int_0^\infty \bar{g}_{z, \xi}( x) d\bar{P}(\xi)
\end{equation}
where
$$
\bar{g}_{z, \xi}( x) = x^{-2} g_{z,1/\xi}(1/x) = x^{-z-1}e^{ -z\xi/x} \left(  z \xi \right)^{ z } \frac{ 1 }{ \Gamma(z) }
$$
which is the density of an inverse Gamma distribution. 

\noindent \textbf{ Condition} $(\bar{\mathbf{\mathcal P}})$:

 The hyperprior is $(\bar{P}, z) \sim \bar\Pi = \bar\Pi_1 \otimes \Pi_z$, where $\Pi_z$ is a probability distribution on $\R^{+}$ satisfying condition \eqref{eq:PriorZ} and $\bar\Pi_1$ is a probability on the set of discrete distributions over $\R^+$ satisfying either of these two types of prior on $P$:
\begin{itemize}
\item \textbf{Dirichlet Prior} of $\bar P$: $\bar P\sim DP(m, \bar G)$ where  $DP(m,\bar G)$ denotes the Dirichlet Process with mass $m>0$ and base probability measure $\bar G$ having  positive and continuous density $\bar g$ on $\R^{+*}$ satisfying:
 \begin{equation} \label{cond:priordensInverse}
y^{\bar a_0} \lesssim  \bar g(y)  \lesssim y^{\bar a_0'}  \mbox{ as }  y \rightarrow 0, \,\, \& \,\, y^{-\bar a_1} \lesssim  \bar g(y) \lesssim y^{-\bar a_1'} \mbox{ as } y \rightarrow +\infty
  \end{equation}
  for some $ -1 < \bar a_0  \leq \bar a_0'$ and $1< \bar a_1  \leq \bar a_1'$ (note that this corresponds to the density $x^{-2} \bar g(1/x)$ satisfying conditions \eqref{cond:priordens} with $\bar a_0 = a_1-2$ and $\bar a_1 = a_0+2$).

\item \textbf{Finite mixture } :
\begin{gather*}
\bar P (d\xi) = \sum_{j=1}^K p_j \delta_{\xi_j}(d\xi) , \quad K \sim \pi_K,  \quad \xi_j \stackrel{iid}{\sim} \bar G,\\
  e^{- c_1 k (\log k)^{\rho_2}}\gtrsim  \pi_K(k) \gtrsim e^{- c_2 k (\log k)^{\rho_2}} \, \forall k\geq 2,\\
  (p_1, \cdots, p_k ) \sim \mathcal D(\alpha_1, \cdots, \alpha_k),
 \end{gather*}
 with $\bar G$ satisfying \eqref{cond:priordensInverse}, $\rho_2 \geq 0$, $0 < c_1 \leq c_2 < +\infty$ and there exists $\bar m$ such that
  $$\forall k \geq 1, \quad  \sum_{i=1}^k \alpha_i \leq k \bar m, \quad \sum_{i=1}^k (-\log \alpha_i)_+ \leq \bar m  k \log k \,\,  \forall k\geq 2.$$
\end{itemize}



In particular, we have similar approximation properties for $f$ in the following class:
$$
\bar{\mathcal P}_{\alpha}( \beta, L(\cdot), \gamma, C_0, C_1, e, \Delta)  = \{f: \, x^{-2} f(1/x) \in \mathcal P_{\alpha}( \beta, L(\cdot), \gamma, C_0, C_1, e, \Delta)\},
$$
since for all $x>0$, as $z\to \infty$, with $\finv(x) = x^{-2} f(1/x)$,
$$
\bar{K}_z f (x) \stackrel{def}{=} \int_0^\infty \bar{g}_{z, \xi}( x) f (\xi) d\xi = \frac 1 {x^{2}} \int_0^\infty g_{z, \epsilon}(1/x) \finv(\epsilon) d\epsilon \to \frac 1 {x^{2}}\finv(1/x) = f (x).
$$

We summarize the result in the following corollary.
\begin{corollary}\label{th:rateIG}
Consider the prior defined by \eqref{mixtureIG} that satisfies condition $(\bar{\mathcal P})$, and
assume that $\mathbf{X}^n = (X_1, \cdots, X_n)$ is a sample of independent observations distributed according to a probability $P_0$ on $\R^+$ having density  $f_0$ with respect to Lebesgue measure.

Then, for any  $\alpha_0>0$, $\beta_1\geq\beta_0>0$, $L(\cdot)$, $\gamma$,  $C_0$, $C_1$, $e$, $\Delta$, $C_2$ and $\rho_1$ , there exists $M>0$ such that
\begin{equation*}
\sup_{\alpha \in [\alpha_0,1]} \sup_{\beta \in [\beta_0,\beta_1]} \sup_{f_0 \in  \bar{\mathcal Q}_{\alpha,\beta}(\cdots)} E_0\left[ \Pi\left( \| f - f_0\|_1 > M n^{-\beta/(2\beta+1)}(\log n)^{q}| \mathbf{X}^n \right) \right] = o(1)
\end{equation*}
with $q= (5 \beta + 1)/(4\beta+2)$ if $\rho_z \leq 5/2$ and $q = ( 2 \rho_z \beta +1)/(4\beta+2)$ if $\rho_z > 5/2$ and
$$\bar{\mathcal Q}_{\alpha,\beta} (\cdots) = \{f: \, \,  x^{-2} f(1/x) \in \mathcal P_{\alpha}( \beta, L(\cdot), \gamma,  C_0, C_1, e, \Delta) \cap \mathcal T(\rho_1, C_2)\}.$$
\end{corollary}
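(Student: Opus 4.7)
The plan is to reduce Corollary \ref{th:rateIG} to Theorem \ref{th:rate} via the invertible change of variables $Y_i = 1/X_i$. First, if $X_i \sim f_0$ then $Y_i$ has density $\finv_0(y) = y^{-2} f_0(1/y)$, and by definition of $\bar{\mathcal Q}_{\alpha,\beta}$ the assumption $f_0 \in \bar{\mathcal Q}_{\alpha,\beta}(\cdots)$ is exactly $\finv_0 \in \mathcal Q_{\alpha,\beta}(\cdots)$, so the regularity and tail hypotheses of Theorem \ref{th:rate} are met by the $Y$-sample.

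Second, I would check that the prior $\bar\Pi$ on $f$ defined by \eqref{mixtureIG} and condition $(\bar{\mathcal P})$ induces, under the map $f \mapsto (\cdot)^{-2} f(1/\cdot)$, a Gamma mixture prior of the form \eqref{mixture} satisfying condition $(\mathcal P)$. Since $\bar g_{z,\xi}(x) = x^{-2} g_{z,1/\xi}(1/x)$, if $f = \int \bar g_{z,\xi}\, d\bar P(\xi)$ then the transformed density equals $\int g_{z,\epsilon}(\cdot)\, dP^*(\epsilon)$, where $P^*$ is the pushforward of $\bar P$ under $\xi \mapsto 1/\xi$. In the Dirichlet case, $\bar P \sim DP(m, \bar G)$ pushes forward to $P^* \sim DP(m, G^*)$ with base density $g^*(\epsilon) = \epsilon^{-2} \bar g(1/\epsilon)$; in the finite mixture case the weights and $\pi_K$ are unchanged while the atoms become i.i.d.\ from $G^*$. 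A direct computation using \eqref{cond:priordensInverse} shows that $g^*$ satisfies \eqref{cond:priordens}, and the hyperprior $\Pi_z$ is common to both conditions, so the induced prior on the transformed density satisfies condition $(\mathcal P)$.

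Finally, I would invoke invariance of the problem under $y = 1/x$ to conclude. For any two densities $f_1, f_2$ on $\R^+$, the change of variables with Jacobian $y^{-2}$ gives $\|f_1 - f_2\|_1 = \|\finv_1 - \finv_2\|_1$ with $\finv_i(y) = y^{-2} f_i(1/y)$. Moreover $\prod_i f(X_i) = \prod_i Y_i^2 \prod_i \finv(Y_i)$, so the Jacobian factor cancels in Bayes' formula and the posterior on $f$ given $\mathbf{X}^n$ is the pushforward under $f \mapsto \finv$ of the posterior on $\finv$ given $\mathbf{Y}^n = (1/X_1,\ldots,1/X_n)$. Consequently the probability in the statement equals the analogous quantity with $f, f_0, \mathbf{X}^n$ replaced by $\finv, \finv_0, \mathbf{Y}^n$, and applying Theorem \ref{th:rate} to the transformed problem yields the claimed bound, uniformly over the parameter ranges specified in the corollary.

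The only non-automatic step is the translation of the polynomial bounds \eqref{cond:priordensInverse} on $\bar g$ into \eqref{cond:priordens} on $g^*$, with exponents flipping as indicated in the parenthetical remark after \eqref{cond:priordensInverse}. This is routine but must be checked carefully so that the constants appearing in the approximation and entropy arguments underpinning Theorem \ref{th:rate} remain uniform across the class $\bar{\mathcal Q}_{\alpha,\beta}$.
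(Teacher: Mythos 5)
Your argument reduces the corollary to Theorem~\ref{th:rate} by the change of variables $Y_i = 1/X_i$, checking that (i) the $L_1$ loss is invariant, (ii) the likelihood picks up only a data-dependent (not $f$-dependent) Jacobian factor so that the posterior transforms by pushforward, and (iii) the inverse-Gamma mixture prior pushes forward to a Gamma mixture prior satisfying condition~$(\mathcal P)$, with the exponents in \eqref{cond:priordensInverse} mapping to those in \eqref{cond:priordens} exactly as the paper's parenthetical remark indicates. This is precisely the approach the paper takes (the text preceding the corollary carries out the same reduction, if more tersely), so the proposal is correct and matches the paper's proof; your write-up merely makes the pushforward-of-prior and Bayes-invariance steps more explicit.
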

 Note that the folded Cauchy density satisfies the conditions for $f_0(x)$ required in the corollary.

\subsection{Approximation of densities by Gamma mixtures} \label{sec:approx:dens}

As in \citet{rousseau:09, kruijer:rousseau:vdv:10}, one of the key elements in the proof of Theorem \ref{th:rate}  is the approximation of a smooth density $f$ by a continuous Gamma mixture of the form $ K_z f_1$ where $f_1$ is a smooth function close to $f$ which is of independent interest. Similarly to \citet{rousseau:09,kruijer:rousseau:vdv:10}, $f_1$ is constructed iteratively to be able to adapt to the smoothness of $f_0$. The general idea is that $K_z f_0(x) $ is a good approximation of $f_0$ if $f_0$ has smoothness  $\beta\leq 2$, as in the Gaussian case, because the Gamma density $g_{z,\epsilon}$ behaves like a Gaussian density when $z$ goes to infinity. To approximate $f_0$ with the correct order $z^{-\beta/2}$ for $\beta >2$, we need to correct for the error $K_z f_0 - f_0$ and replace $K_z f_0$ by $K_z f_1$ where construction of $f_1$ takes into account $K_z f_0 - f_0$. Thus, we iterate until the approximation error $K_z f_k - f_k$ is of the required order, $z^{-\beta/2}$.

The above approximation scheme is described in the following proposition:
 \begin{proposition}\label{lem:approx:1}
 For all $z>1-\alpha$, for all $f \in \mathcal P_{\alpha} ( \beta, L(\cdot), \gamma,  C_0, C_1, e, \Delta)$, there exist $d_j \in \R$, $j=1, \cdots, 2 r_0 $ such that the function $\tilde f_{\beta,\alpha}(x) = C_z f_{\beta,\alpha}(C_z x)$ with $C_z = (z -\alpha+1)/z$ and
$$
f_{\beta, \alpha}(x) = f(x) - x^{\alpha-1}\sum_{j=1}^{r}\frac{ d_j(z) }{ j !z^{j/2} } x^j [x^{1-\alpha}f(x)]^{(j)}, \quad d_j(z) = d_j + O(1/z),
$$
  satisfies
    \begin{equation} \label{pointwise:approx}
    \left| K_z \tilde{f}_{\beta,\alpha} (x) - f(x) \right| \leq  z^{-\beta/2} R(x), \quad \int_0^\infty \frac{ R(x)^2 }{ f(x) }dx  \leq C_R , \end{equation}
  where  $C_R$ depends on $\alpha, \beta, L(\cdot), \gamma,  C_0, C_1, e, \Delta$ only.
Moreover, the probability density
  \begin{equation} \label{fbeta}
 \bar f_\beta = c_\beta \left( \tilde f_{\beta,\alpha}\1_{\tilde f_{\beta,\alpha}\geq \tilde f /2} +\frac{ \tilde f}{2}\1_{\tilde f_{\beta,\alpha}< \tilde f /2}\right)
 \end{equation}
with $\tilde f (x) = C_z f(C_z x) $ for all $x \in \R^+$, satisfies
\begin{equation}\label{Hell:contdens}
D_H(K_z \bar f_\beta, f) \leq B z^{-\beta/2}, \quad \forall f \in \mathcal P_{\alpha} ( \beta, L(\cdot), \gamma,  C_0, C_1, e, \Delta)
\end{equation}
where $B$ depends only on $\alpha, \beta, L(\cdot), \gamma,  C_0, C_1, e, \Delta$.
\end{proposition}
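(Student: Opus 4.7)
The plan is to first strip off the possible singularity at zero via Lemma~\ref{lem:representation}, then perform an explicit Taylor--moment expansion of the kernel $K_z$ on a smooth density and cancel the low-order terms by an iteratively chosen correction, and finally pass from the (signed) approximant $\tilde f_{\beta,\alpha}$ to a bona fide probability density by truncation.

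\textbf{Reduction to the bounded case.} By Lemma~\ref{lem:representation}, $K_z f(x) = x^{\alpha-1} K_{z+1-\alpha} h(x/C_z)(1+O_z)$ with $h(x) = x^{1-\alpha}f(x)$ bounded and H\"older of order $\beta$ on $\R^{+*}$. It therefore suffices to build, for $h$ in the corresponding H\"older class, a function $\tilde h_\beta$ with $|K_{z'}\tilde h_\beta(x)-h(x)|\leq z'^{-\beta/2}R_h(x)$ and $\int R_h^2/h<\infty$, and then transport everything back to $f$ by the rule $x \mapsto x^{\alpha-1}h(x/C_z)$, absorbing the multiplicative $(1+O_z)$ into the rescaling $C_z = (z-\alpha+1)/z$. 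This is the origin of the ansatz $f_{\beta,\alpha}(x) = f(x) - x^{\alpha-1}\sum_j d_j(z)x^j[x^{1-\alpha}f(x)]^{(j)}/(j!z^{j/2})$.

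\textbf{Taylor--moment expansion and cancellation.} Viewed as a function of $\epsilon$ at fixed $x$, $g_{z',\epsilon}(x)$ is (up to normalisation) an inverse-gamma density in $\epsilon$, and its moments $M_j(x) = \int g_{z',\epsilon}(x)(\epsilon-x)^j d\epsilon$ evaluate to $x^j\mu_j(z')$ with $\mu_j(z') = c_j z'^{-\lceil j/2\rceil}(1+O(1/z'))$. Taylor-expanding $h(\epsilon)$ around $\epsilon=x$ to order $r=\lceil\beta\rceil-1$ gives
\[
K_{z'}h(x)-h(x) \;=\; \sum_{j=1}^{r}\frac{h^{(j)}(x)}{j!}x^j\mu_j(z') + \mathcal R(x),
\]
and \eqref{cond:tayloralpha} controls the Taylor remainder by $|\mathcal R(x)|\lesssim z'^{-\beta/2}L(x)x^\beta(1+x^{\gamma+2r_0})$. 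Following the iterative scheme of \citet{rousseau:09, kruijer:rousseau:vdv:10}, I solve a triangular linear system for coefficients $d_j(z) = d_j+O(1/z)$ which cancel every term of the double sum of order strictly larger (in $z^{-1/2}$) than $z^{-\beta/2}$, producing the approximant $\tilde f_{\beta,\alpha}$. The pointwise bound \eqref{pointwise:approx} then reduces to bounding $R(x)$ by $L(x)x^\beta(1+x^{\gamma+2r_0})$ plus terms of the form $x^j|h^{(j)}(x)|$, for which the weighted integrability $\int R^2/f \leq C_R$ is exactly the content of the moment conditions \eqref{integralHolderalpha}.

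\textbf{From signed approximant to probability density.} On the set $A=\{\tilde f_{\beta,\alpha}<\tilde f/2\}$ one has $|\tilde f_{\beta,\alpha}-\tilde f|\geq \tilde f/2$, so the $L^2(\tilde f^{-1})$ control from the previous step combined with Markov's inequality yields $\int_A \tilde f \lesssim z^{-\beta}C_R$. Hence $\bar f_\beta$ defined by \eqref{fbeta} is nonnegative, the renormalisation satisfies $c_\beta=1+O(z^{-\beta/2})$, and $\|\bar f_\beta - \tilde f_{\beta,\alpha}\|_1 = O(z^{-\beta/2})$. Using $(\sqrt a-\sqrt b)^2 \leq (a-b)^2/b$ gives
\[
D_H^2(K_z\bar f_\beta, f) \leq 2\int\frac{(K_z\bar f_\beta - K_z\tilde f_{\beta,\alpha})^2}{f}dx + 2\int\frac{(K_z\tilde f_{\beta,\alpha}-f)^2}{f}dx;
\]
the second integral is bounded by $z^{-\beta}C_R$ via \eqref{pointwise:approx}, while the first is handled by Jensen's inequality applied to $K_z$ combined with the $L^2(\tilde f^{-1})$ bound on the truncation error, yielding \eqref{Hell:contdens}.

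\textbf{Main obstacle.} The delicate part is the simultaneous, order-by-order cancellation in Step~2: the Taylor--moment expansion is a double sum in which each $\mu_j(z')$ has its own $1/z'$ expansion, so different powers of $z'^{-1/2}$ couple several derivatives of $h$, and the $d_j(z)$ must be determined by backward substitution so that all remaining terms are of order $z^{-\beta/2}$. Keeping the residual pointwise function $R(x)$ in $L^2(f^{-1})$, given only the fatter-than-Gaussian tails of the gamma kernel and the minimal moment conditions \eqref{integralHolderalpha}, is the delicate bookkeeping that distinguishes the present gamma case from the Gaussian mixture analysis of \citet{kruijer:rousseau:vdv:10}.
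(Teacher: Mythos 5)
Your Steps 1 and 2 track the paper's argument: the reduction via Lemma~\ref{lem:representation} and the Taylor--moment cancellation by a backward-triangular solve for the $d_j(z)$ are exactly the route taken (the paper's order is the reverse --- bounded case first, then transport --- but the logic is the same), and your moment formula with $z'^{-\lceil j/2\rceil}$ is if anything stated more carefully than the paper's Lemma~\ref{lem:tech2}, which is where the $(2\beta+e)/j$ moment conditions in \eqref{integralHolderalpha} are consumed. So far so good.

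The gap is in Step 3, and it is a real one, not cosmetic. You claim that ``the $L^2(\tilde f^{-1})$ control from the previous step combined with Markov's inequality yields $\int_A \tilde f\lesssim z^{-\beta}C_R$.'' But the $L^2(\tilde f^{-1})$ norm of $\tilde f_{\beta,\alpha}-\tilde f$ is of order $z^{-1/2}$ (its leading term carries a single factor $z^{-1/2}$, and the functional class only guarantees the weighted integral is finite), so Markov with $p=2$ gives $\tilde F(A)\lesssim z^{-1}$, which is far short of $z^{-\beta}$ once $\beta>1$. Getting $z^{-\beta}$ requires Markov with the higher exponent $p=(2\beta+e)/j$ --- this is precisely why the functional class is defined with those exponents rather than squares --- and that is what the paper does via the set $\mathcal A_1(a)$ in \eqref{def:A1} and the estimates \eqref{FEzc2}--\eqref{FEzcbis2}. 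Also, the renormalisation estimate $c_\beta = 1+O(z^{-\beta/2})$ (the paper's \eqref{eq:Cbeta}) needs the same high-moment Markov, not the $L^2$ one.

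Second, your proposed ``Jensen's inequality applied to $K_z$'' for the remaining term $\int (K_z(\bar f_\beta-\tilde f_{\beta,\alpha}))^2/f$ is not enough. A Cauchy--Schwarz/Jensen bound of the form $(K_z g)^2\leq K_z(g^2/\tilde f)\cdot K_z\tilde f$ produces a factor $\sup_x K_z\tilde f(x)/f(x)$, which is not uniformly bounded, and even after integrating out $x$ one only recovers the (insufficient) $L^2(\tilde f^{-1})$ quantity above. The structural issue is that $K_z$ spreads mass: controlling $\tilde F$ on the bad set $A$ in $\epsilon$-space does not immediately control $\int K_z(\tilde f\1_A)(x)\,dx$. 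The paper handles this with a transfer argument between $\mathcal A_1(a)$ (in $\epsilon$) and $\mathcal A_1(a/2)$ (in $x$): using the Gaussian approximation of $g_{z,\epsilon}(x)$ to localise to $|\epsilon/x-1|\leq M\sqrt{\log z/z}$, it shows that if $\epsilon\in\mathcal A_1(a)$ and $x$ is nearby then $x\in\mathcal A_1(a/2)$ (unless $a<\beta$), and then applies the high-moment Markov bound to $\tilde F(\mathcal A_1^c(a/2))$. That localisation-and-transfer step, together with the choice $a\in(\beta/2,\beta)$, is the missing idea; without it, Step~3 does not deliver \eqref{Hell:contdens} at the stated rate.
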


Note that in  the approximations \eqref{pointwise:approx} and  \eqref{Hell:contdens}, when $\alpha \in [\alpha_0, 1]$ with $0 < \alpha_0 \leq 1$, then $C_R$ and $B$ can be chosen depending on $\alpha_0$ and not $\alpha$.
Proposition  \ref{lem:approx:1} is proved in Section \ref{sec:proof:approx1}.

\section{Numerical results}\label{sec:Applications}

\subsection{Prior model}

In the following sections we will fit a Bayesian model to the data with following Dirichlet Process prior:
\begin{eqnarray}\label{eq:DefaultPrior}
f(x) &=& \sum_j p_j g_z(x|\epsilon_j) , \quad \epsilon_j\stackrel{iid}{\sim} G, \quad p_j = V_j\prod_{i<j}(1-V_i), \quad V_i \sim \mbox{Beta}(1, m),\notag\\
G(x)  &\propto& [x^a I_{x\leq 1}+ x^{-a}I_{x>1}], \quad a>1,\\
\Pi_z: && \sqrt{z} \sim \Gamma(b,c), \quad b,c>0. \notag
\end{eqnarray}
It is easy to check that Condition $(\mathbf{\mathcal P})$  holds for this prior.
 We use default choices of free parameters $m=1$, $a=2$ and $b=c=1$, however we check sensitivity with respect to these parameters.

To sample from the posterior we use the slice sampling algorithm of \citet{Kalli2011}. Introducing the auxiliary variables $\mathbf u= (u_1, \cdots, u_n)$ the uniform random truncation variables  and $\mathbf c = ( c_1, \cdots, c_n)$ the allocation variables so that the full likelihood is written as
$$L_n(\mathbf X^n , \mathbf u, \mathbf c) = \prod_{i=1}^n\frac{ \1_{u_i\leq p_{c_i}}}{  p_{c_i}} g_{z, \epsilon_{c_i}}(X_i) p_{c_i}, \quad p_j = V_j \prod_{l<j} (1-V_l)$$
allows to use a Gibbs sampler algorithm based on the following full conditional distributions
\begin{itemize}
\item $$[\epsilon_j | \cdots] \stackrel{ind}{\sim } G(\epsilon_j) e^{-z S_j/\epsilon_j} \epsilon_j^{-z n_j}  , \quad S_j = \sum_{c_i=j}X_i, \quad n_j = \sum_{i=1}^n\1_{c_i=j}$$
\item $$[V_j | \cdots \mbox{exclude} \, u]  \stackrel{ind}{\sim } \mbox{Beta}( n_j+1, \sum_{l>j}n_l + m)$$
\item $$[u_i | \cdots ]  \stackrel{ind}{\sim } \mathcal U(0,p_{c_i})$$
\item $$p[c_i=j| \cdots ] \propto \1_{u_i\leq p_{j}}g_z(X_i|\epsilon_j)$$
\item $$[z| \cdots ] \propto \frac{\pi_z(z) z^{nz}}{ \Gamma(z)^n}e^{ -z \sum_j S_j/\epsilon_j + z \sum_i \log X_i - z\sum_{j: n_j>0}n_j \log \epsilon_j} $$
\end{itemize}
where the full conditional distribution of $z$ is sampled using a Metropolis-Hastings step with proposal
\begin{equation}\label{eq:ProposalZ}
\Gamma\left((b+n)/2, \sum_{j: n_j>0} \frac{ S_j}{\epsilon_j} - n - \sum_i \log X_i+ \sum_{j: n_j>0}n_j \log \epsilon_j\right).
\end{equation}
To improve convergence when sampling from  the conditional distribution of $z$ at iteration $i$, we also use a proposal which is a mixture of the gamma density \eqref{eq:ProposalZ} with weight $1-w_z$ and  of $\Gamma( z^{(i-1)} B_z,B_z)$ with a small weight $w_z$ and a large $B_z$ where $z^{(i-1)}$ is the value of $z$ sampled at iteration $i-1$. We use the default values $w_z=0.01$ and $B_z=10$.

Another Metropolis-Hastings step is sampling from the full conditional distribution of $\epsilon_j$ which uses proposal $IG(a+z^{(i-1)} n_j, z^{(i-1)} S_j)$ at iteration $i$ for $j$: $n_j>0$.

\subsection{Simulations}

  \begin{figure}
 \centering
 \includegraphics[width=5cm , height= 5cm]{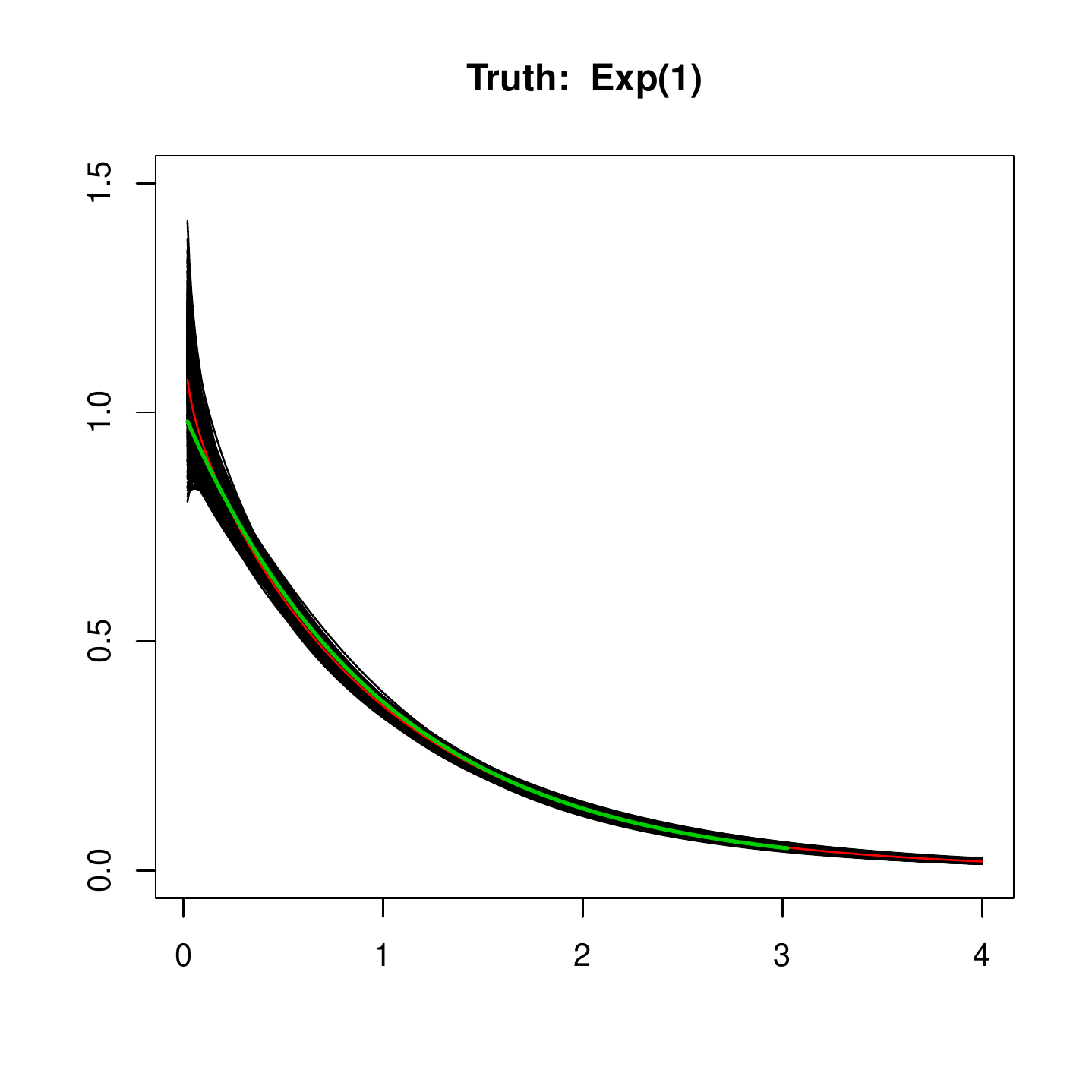}
 \includegraphics[width=5cm , height= 5cm]{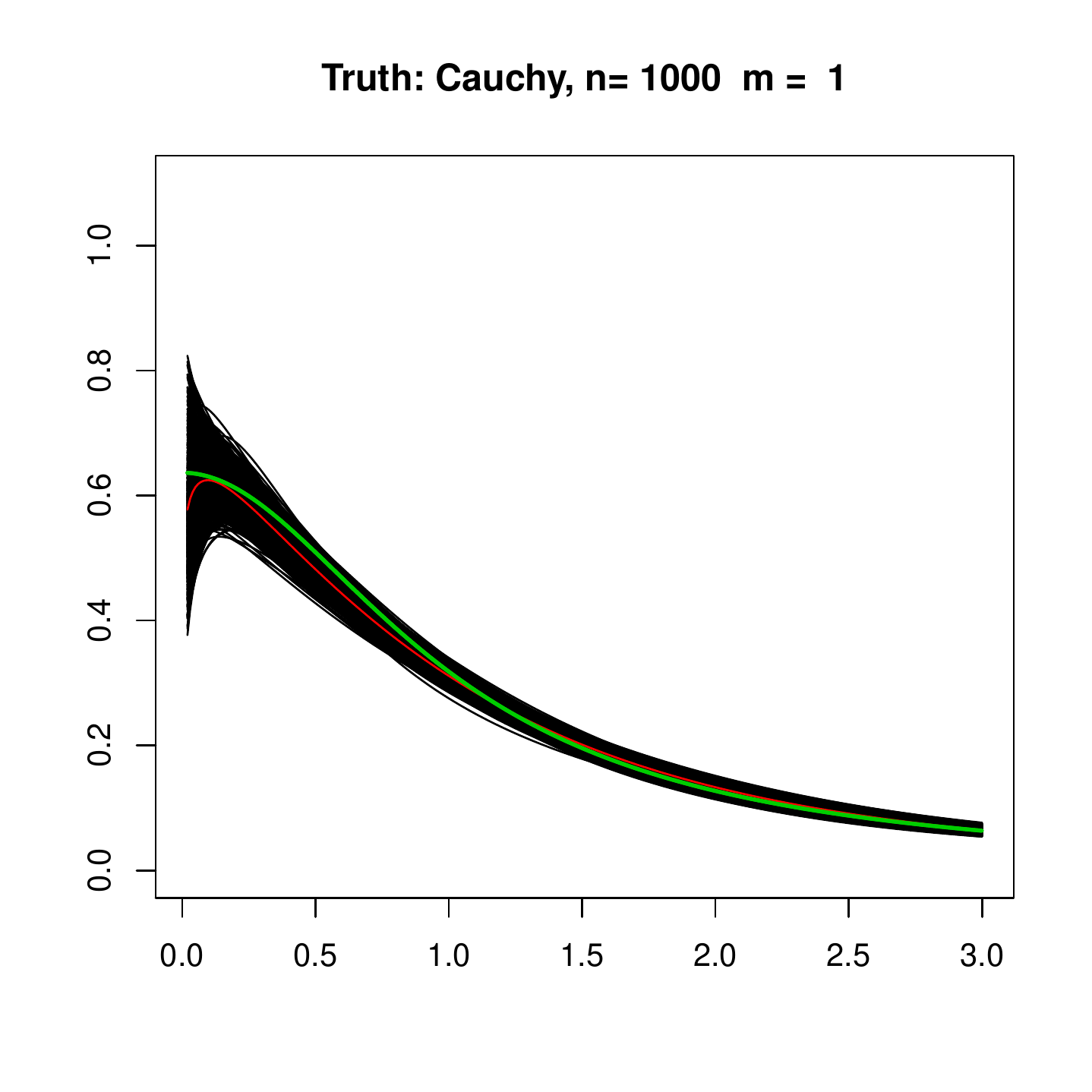}
\caption{Left: exponential density. Right: folded Cauchy density}
\label{fig:ExpCauchy}
 \end{figure}

  \begin{figure}
 \centering
 \includegraphics[width=5cm , height= 5cm]{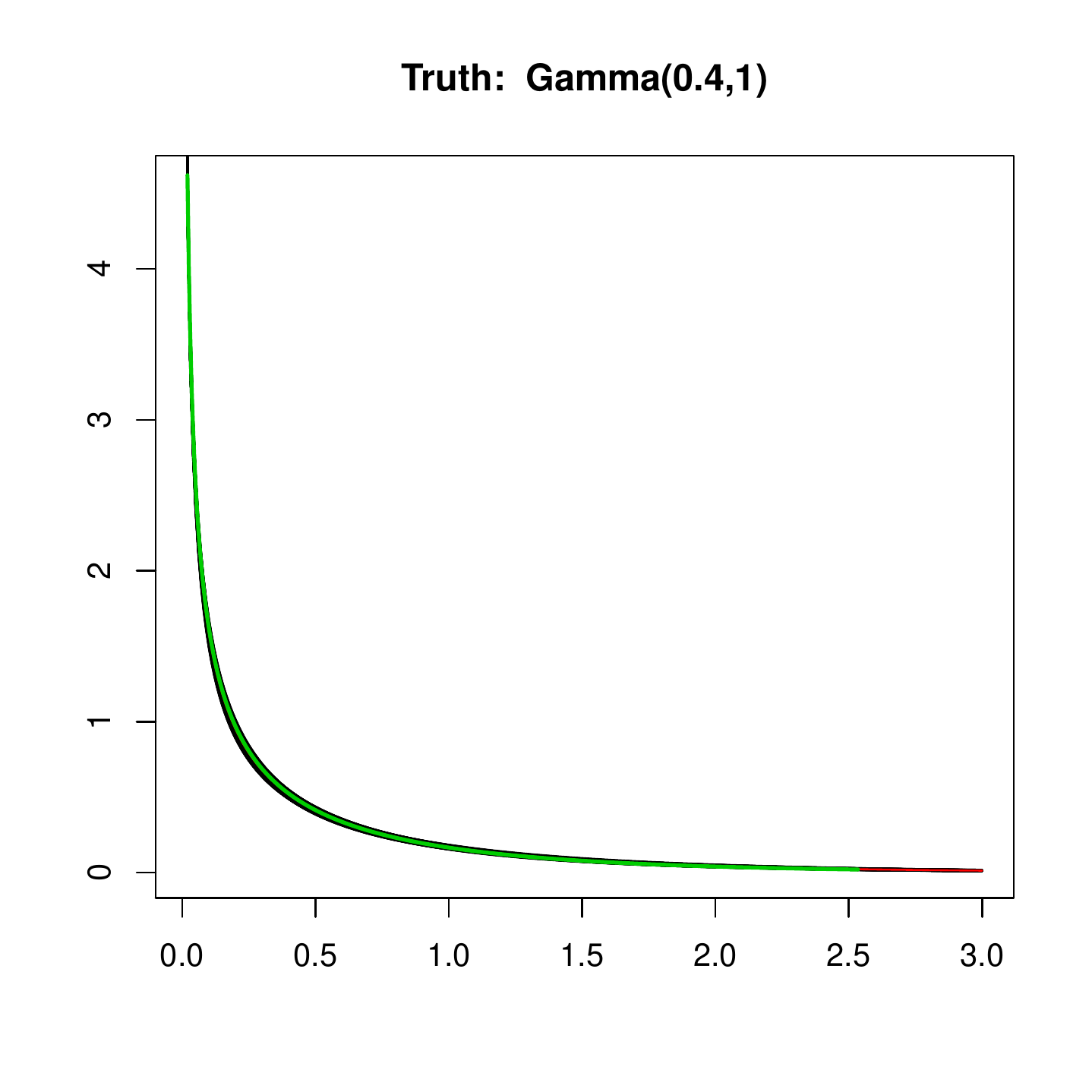}
 \includegraphics[width=5cm , height= 5cm]{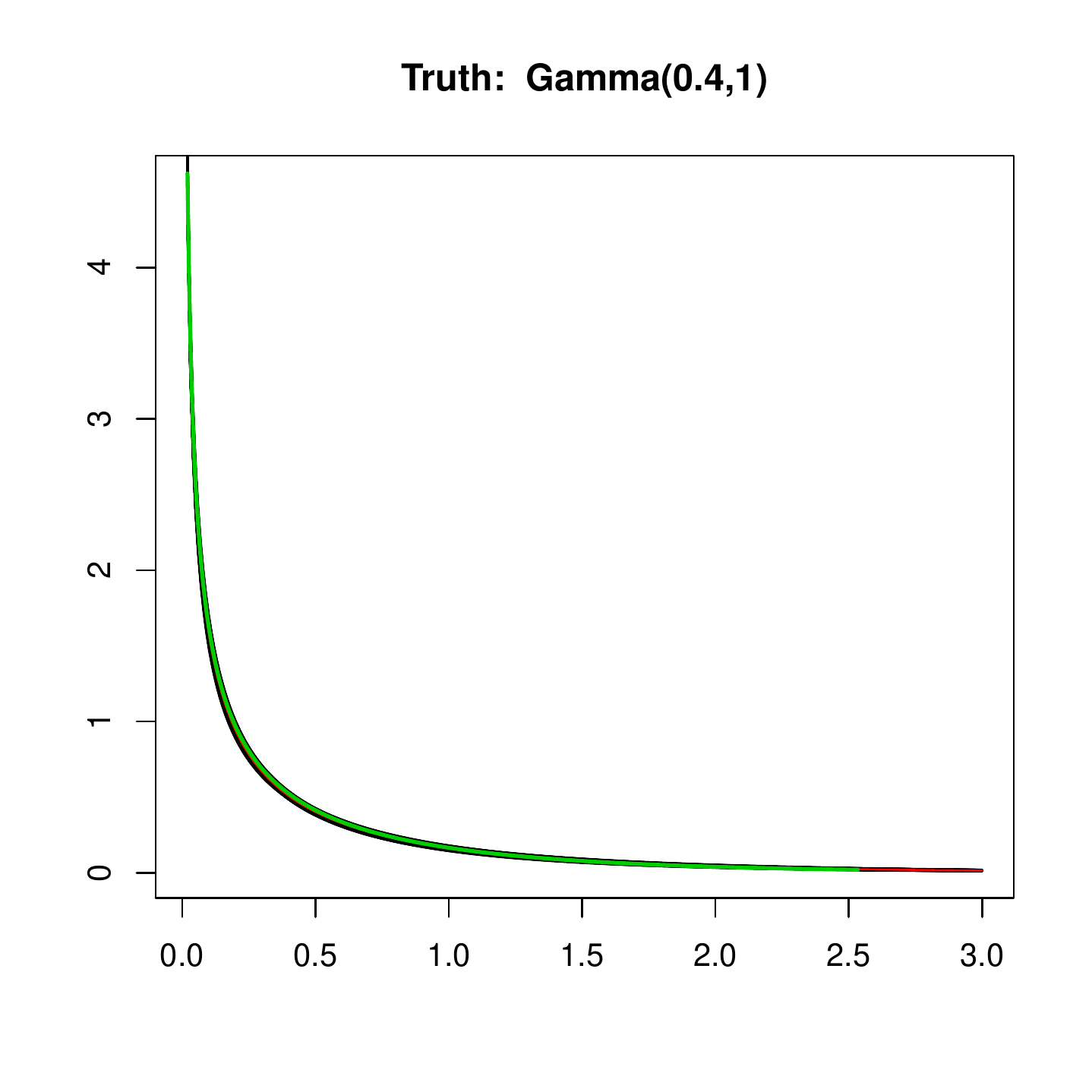}
\caption{$f_0(x) = Gamma(0.4, 1)$. Left: m=0.1, right : m=1}
\label{fig:GammaUnbounded}
 \end{figure}

  \begin{figure}
 \centering
 \includegraphics[width=5cm , height= 5cm]{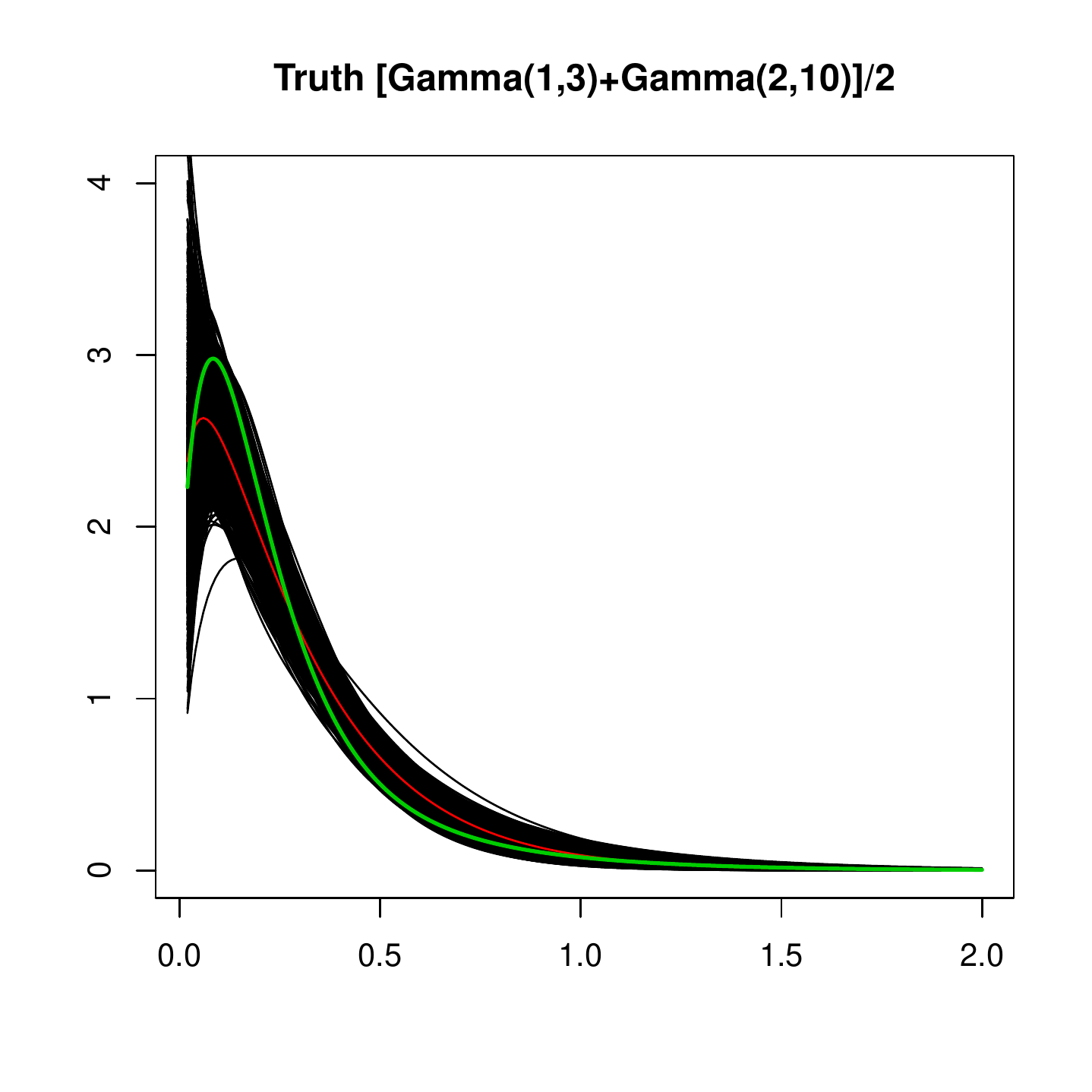}
 \includegraphics[width=5cm , height= 5cm]{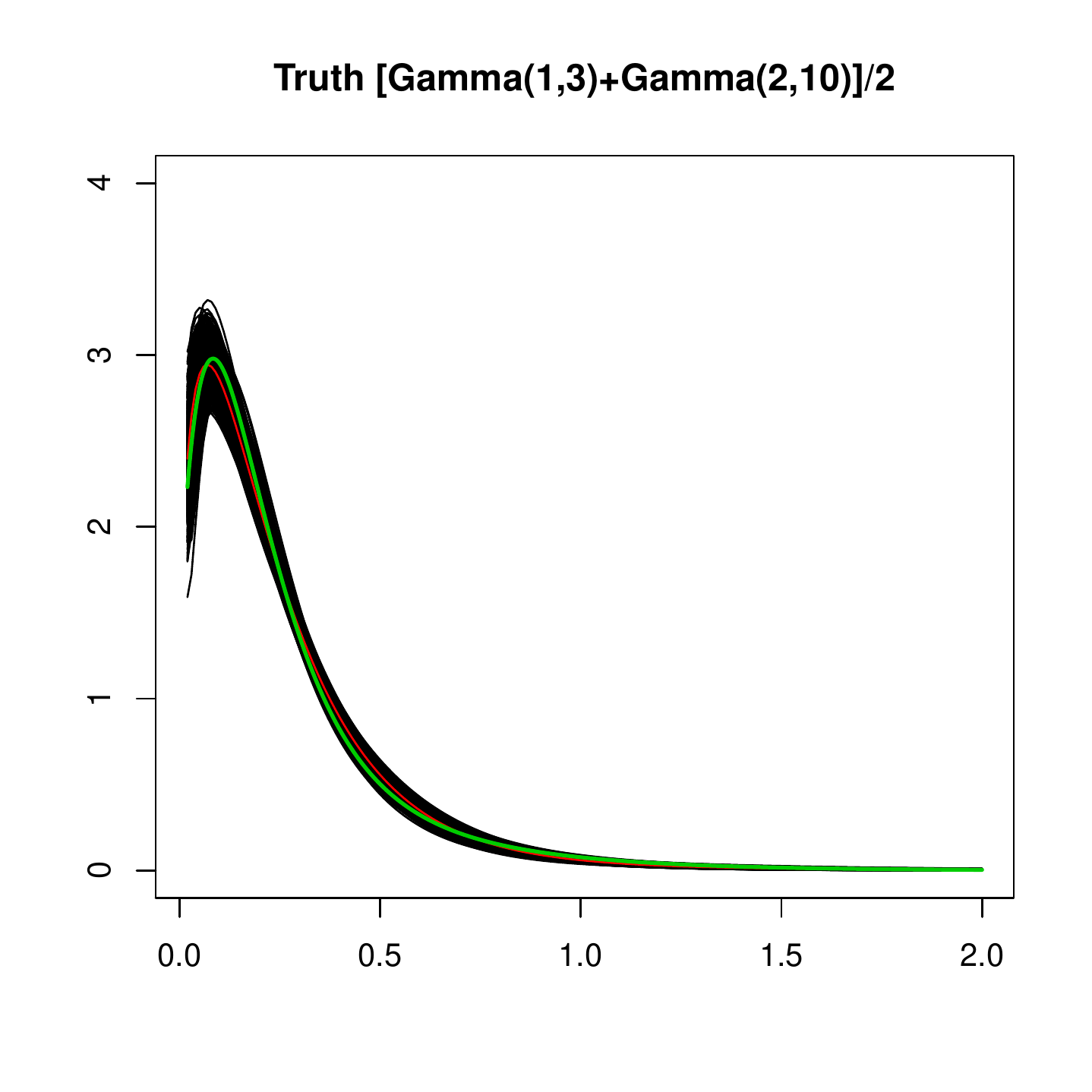}
\caption{Density:  0.5 Gamma(1,3) + 0.5 Gamma(2,10). Left: n=100, right : n= 1000}
\label{fig:MixtureDiffZ}
 \end{figure}

We simulate $n=1000$ observations from the true density $f_0$, and fit a Bayesian model with prior \eqref{eq:DefaultPrior}. We considered the following true densities.
\begin{enumerate}
\item  Exponential: $f_0(x) = e^{-x} $, $x>0$  (Figure \ref{fig:ExpCauchy}).
\item  Folded Cauchy: $f_0(x)  = \frac{ 2 }{\pi} \frac 1 {(1+x^2)}$, $x>0$   (Figure \ref{fig:ExpCauchy}).
\item  Unbounded: $f_0(x)$ is  $\Gamma(0.4, 1)$ (Figure \ref{fig:GammaUnbounded}).
\item Mixture with different $z$: $f_0(x)$ is  $0.5 \Gamma(1,3) + 0.5 \Gamma(2,10)$ (Figure \ref{fig:MixtureDiffZ}).
\end{enumerate}
Even though the folded Cauchy density does not satisfy the conditions of the theorem, we show that the proposed gamma mixture still provides a reasonably concentrating posterior distribution.

1000 thinned samples from the posterior distribution are plotted for each true density after at least 50000 burn in iterations, with the red line representing the mean of the posterior distribution and the green line representing the true density. Improvement of the concentration of the posterior distribution with increasing sample size is presented for the two component mixture in Figure~\ref{fig:MixtureDiffZ} (see also Table~\ref{table:L1Norm} for other densities). For all considered true densities, including the unbounded one and the folded Cauchy density, the proposed gamma mixture model performs well. However, the value of the folded Cauchy density around 0 has high uncertainty. Sensitivity with respect to the choice of the free parameters was investigated for all densities, all leading to good performance (presented for the unbounded density $Gamma(0.4, 1)$). We found that using $m=10$ leads to a high number of mixture components even in the cases $\Gamma(0.4,1)$ and $Exp(1)$ (there is only one component if $m=1$ or $m=0.1$).

We study sensitivity of the quality of estimation with regard to the considered loss function, the $L_1$ norm $||f-f_0||_1$, with respect to the DP mass parameter $m$ and  the sample size $n$. The median and the 95\% quantile of the posterior distribution of  $||f-f_0||_1$ for the considered densities with different values of $m$ (0.1, 1, 10) and different sample sizes for the default value $m=1$ ($n=100$ and $n=1000$) are presented in Table~\ref{table:L1Norm}. The quantiles decrease with increasing sample size, and are little affected by changing the value of $m$.

\begin{table}
\begin{center}
\begin{tabular}{|l| l|l|   c| c| }
\hline
Distribution & $m$ & $n$  &50\%&95\%\\
\hline
Cauchy & $m=1$ & $n=100$   &0.187&0.303\\
Cauchy& $m=1$& $n=1000$   &0.071&0.109\\
Cauchy& $m=0.1$& $n=1000$  &0.073&0.108\\
Cauchy& $m=10$ & $n=1000$  &0.065&0.100\\
\hline
Exp(1) & $m=1$& $n=100$  &0.0933&0.1873\\
Exp(1) & $m=1$& $n=1000$  &0.0323 &0.0605\\
Exp(1) & $m=0.1$& $n=1000$   &0.0312 &0.0609\\
Exp(1) & $m=10$& $n=1000$   &0.0383&0.0727\\
\hline
Gamma mixture& $m=1$& $n=100$  &0.1527 &0.2549\\
Gamma mixture& $m=1$& $n=1000$  &0.0944 &0.1170\\
Gamma mixture& $m=0.1$& $n=1000$  &0.0931 &0.1184\\
Gamma mixture& $m=10$& $n=1000$  &0.0692&0.0961\\
\hline
Gamma(0.4,1)& $m=1$& $n=100$ & 0.1274 & 0.2267\\
Gamma(0.4,1)& $m=1$& $n=1000$ & 0.0400&0.0631\\
Gamma(0.4,1)& $m=0.1$& $n=1000$  &0.0321 &0.0655\\
Gamma(0.4,1) & $m=10$ & $n=1000$  & 0.0347 &0.0658\\
\hline
\end{tabular}
\end{center}
\caption{\textsl{ 50\% and 95\% quantiles of the posterior distribution of $||f-f_0||_1$, for different values of the Dirichlet Process prior mass $m$ and sample size $n$.} }\label{table:L1Norm}
\end{table}

\subsection{Email arrival data}

\begin{figure}
 \centering
 \includegraphics[width=4.6cm ]{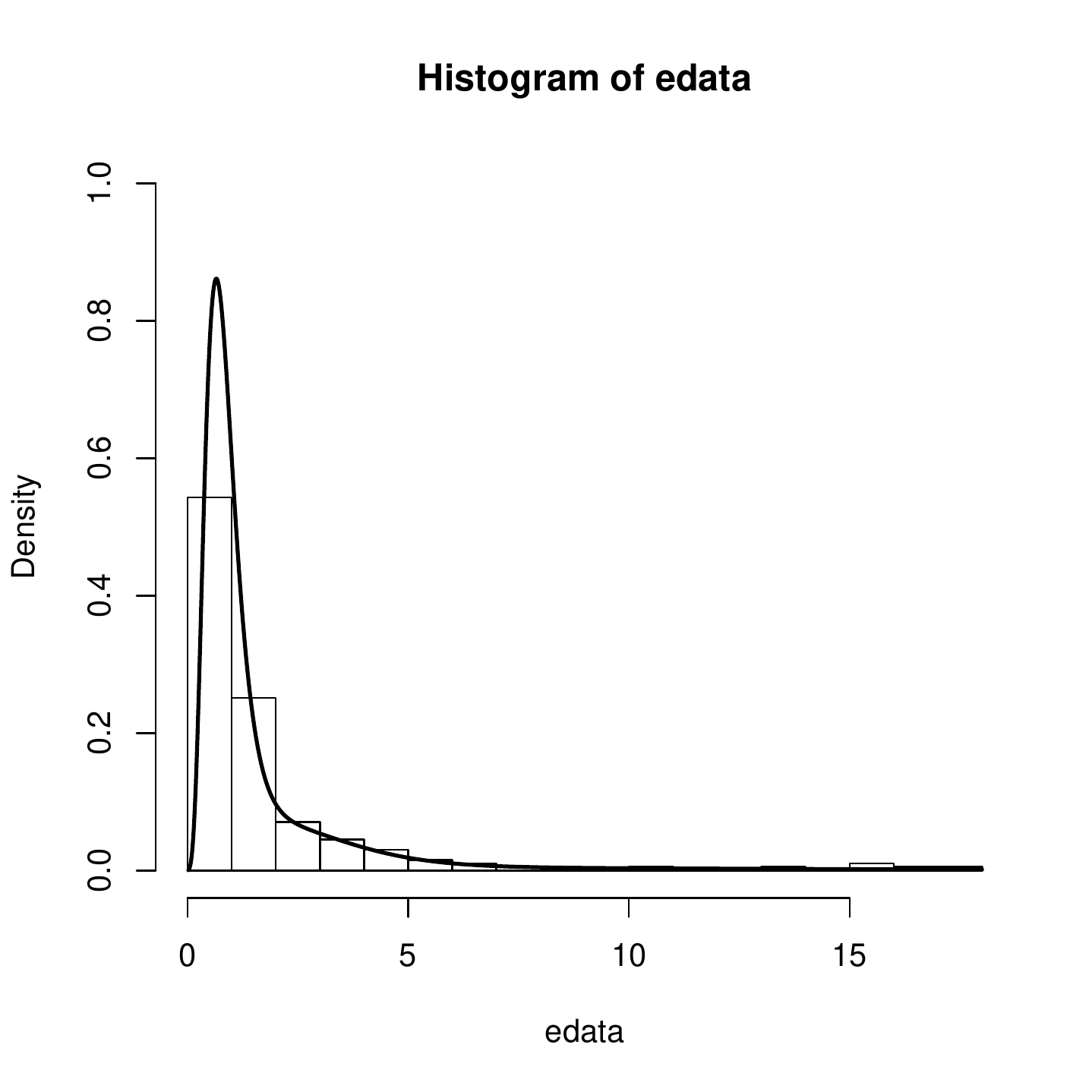}
 \includegraphics[width=4.6cm ]{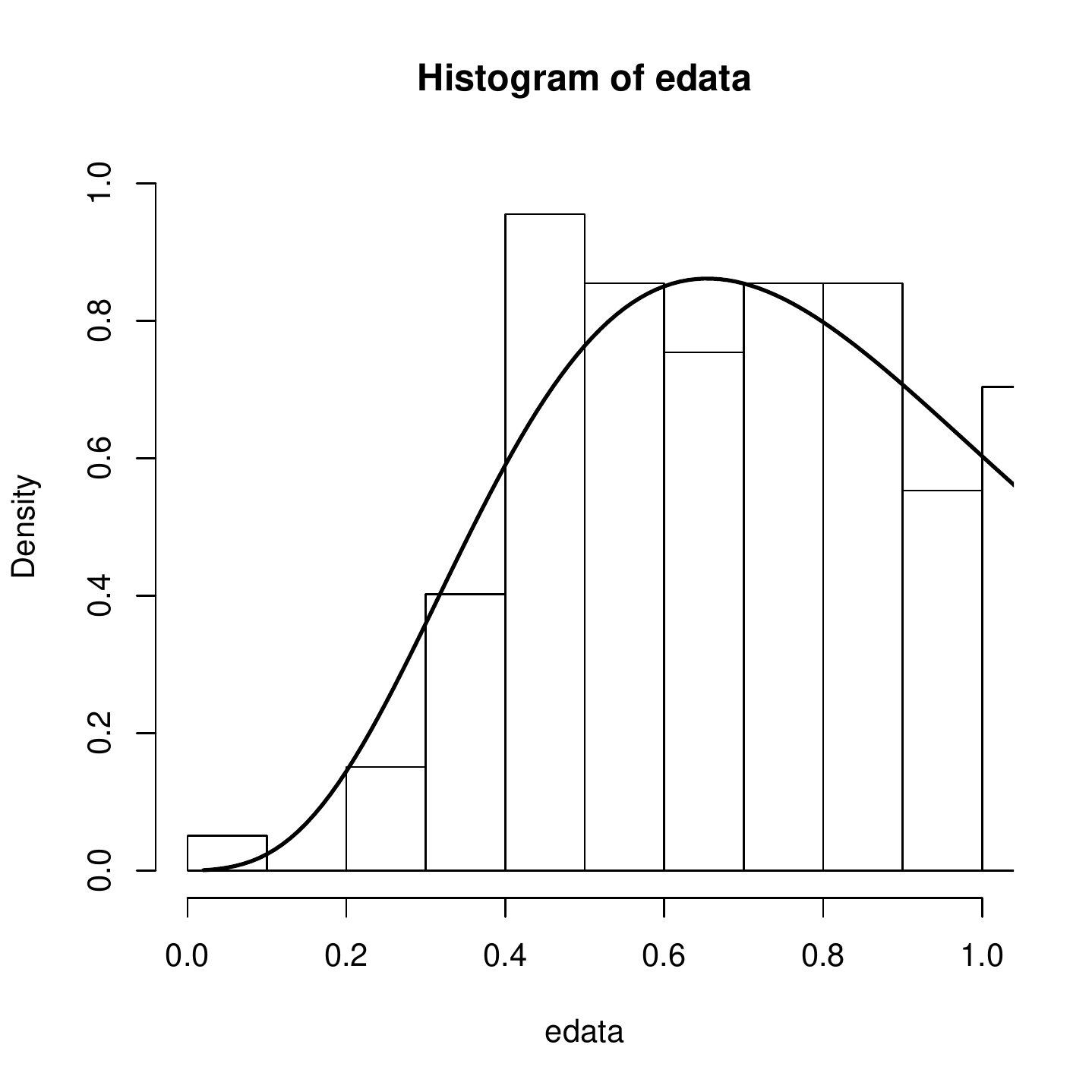}
 \includegraphics[width=4.6cm ]{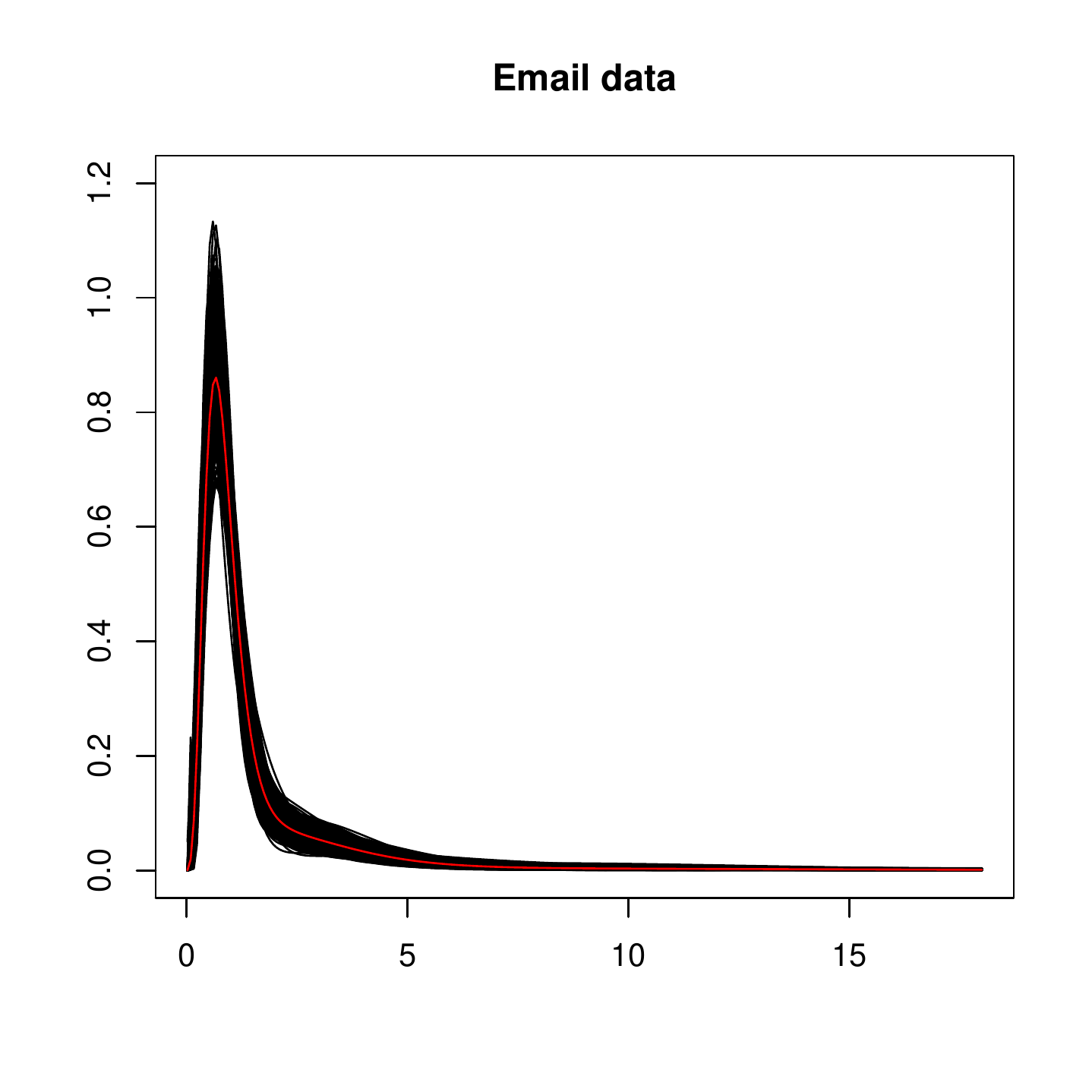}
\caption{ Email arrival data. Top left: histogram of the data with the posterior mean density estimate (black line).  Top right: zoom in of the histogram of the data with the posterior mean density estimate (black line). Bottom:  draws from the posterior distribution with posterior mean (red line). }\label{fig:EmailData}
 \end{figure}

In this section we consider the data of the intervals between arrival times of emails modelled in \cite{GammaMixturesRuggeri2001} which consists of the interarrival times (minutes) of 203 E-mail messages (we are grateful to Fabrizio Ruggeri, one of  the authors, who has kindly provided the data). The proposed location Gamma mixture  \eqref{eq:DefaultPrior} with the default choice of the free parameters was fitted to the data. 1250 samples from the posterior were used which are the last 25\% of the  50000 iterations thinned by 10. The histogram of the data with superimposed posterior mean and the samples from the posterior distribution are shown in Figure~\ref{fig:EmailData}. The histogram shows that the fit of the location mixture is similar to the fit of the location-scale mixture presented in \cite{GammaMixturesRuggeri2001}. The plot of the  samples with the posterior indicates uncertainty about the values of the density around 0 as well as high uncertainty about the possible second mode around 3. We also present a zoom in into the neighbourhood of 0 which confirms the findings that the density is small around 0 and that the distribution of the email arrival times differs from exponential.

\section{Proofs}\label{sec:Proofs}
\subsection{Proof of Theorem \ref{th:rate}} \label{sec:pr:thrate}

\begin{proof}[Proof of Theorem \ref{th:rate}]
The proof consists in verifying the assumptions of Theorem 2.1 of \citet{ghosal:ghosh:vdv:00}.
The first assumption, on the prior mass of Kullback - Leibler neighbourhood of the true density, is verified in Lemma~\ref{lem:DiscrApproxBoundary}.
Note that there is a small modification from condition (2.4) of Theorem 2.1 of \citet{ghosal:ghosh:vdv:00} in that the  bound on the variance of the log-likelihood ratio has an extra $\log n$ term. This does not affect the conclusion since the variance term only need to be negligible compared to $n\KL( f , K_z*P)^2$.
In Lemma~\ref{prop:Entropy} we control the $L_1$ (and Hellinger) entropy of the sieves which are defined below.

Fix an arbitrary $\zeta >0$ to be defined later, and take a sieve $Q_n = Q(\zeta\varepsilon_n, J_n , a_n, b_n,\underline{z}, \bar{z}) $ as defined by  \eqref{def:sieveQ} in Lemma~\ref{prop:Entropy} with
\begin{gather*}
 \varepsilon_n =  n^{-\beta/(2\beta+ 1)} [\log n]^{(5\beta+1)/(4\beta+2)},  \quad  J_n = J_0 n^{1/(2\beta+1)} [\log n]^{3\beta/(2\beta+1)},\\
 a_n = \exp(- C(n(\log n)^{5\beta+1})^{ 1/(2\beta+1)}), \quad  b_n =  \exp( C(n(\log n)^{5\beta+1})^{ 1/(2\beta+1)}),  \\
  \underline z =  \exp(- z_0(n(\log n)^{5\beta+1})^{ 1/(2\beta+1)} ), \quad \bar z =  z_0 n^{2/(2\beta+1)} (\log n)^{2(q - \rho_z)}
\end{gather*}
for some constants $C, z_0 $ and $J_0$ large enough and $q$ as defined in the theorem.

Lemma \ref{prop:Entropy} implies that we need to verify whether these constants satisfy the following conditions:
 \begin{eqnarray}\label{eq:EntropyConditions}
\log N( \zeta \epsilon_n, Q_n, \| \cdot \|_1) \leq  \tilde c n \epsilon_n^2,\quad
\Pi(Q_n^c) \lesssim e^{-\tilde c n \epsilon_n^2}
\end{eqnarray}
 for any $\tilde c >0$ with $\epsilon_n=\epsilon_0 \varepsilon_n$ by choosing $\epsilon_0$ large enough. This corresponds to choosing $\varepsilon = \zeta \epsilon_n/5$ in Lemma \ref{prop:Entropy}.

  The second inequality in \eqref{eq:EntropyConditions} holds if
\begin{eqnarray*}
J_n \bar{G}((0,a_n)) &\lesssim& 
e^{-c n \varepsilon_n^2},\\
J_n \bar{G}((a_n+b_n, \infty)) &\lesssim& 
e^{-c n \varepsilon_n^2},\\
1 - H([\underline{z}, \bar{z}]) &\lesssim& e^{-c n \varepsilon_n^2},\\
\left( \frac{e m}{J_n} \log(1/\varepsilon_n) \right)^{J_n} &\lesssim& e^{-c n \varepsilon_n^2}.
\end{eqnarray*}
In our case, $\varepsilon_n= n^{-\gamma} (\log n)^t$ with $\gamma = \beta/(2\beta+1)$ and $t=(5\beta+1)/(4\beta+2)$. The last condition holds if
$$J_n(\log J_n - \log\log n + C)\geq c n^{1-2\gamma} [\log n]^{2t},
$$
e.g. if $J_n = C n^{1-2\gamma} [\log n]^{2t-1} = C n^{1/(2\beta+1)} [\log n]^{3\beta/(2\beta+1)}$ .

The first inequality in \eqref{eq:EntropyConditions} holds if
$$
J_n  \left[   \log \log(b_n/a_n) +  \log(\bar{z}_n)  + \log (1/\varepsilon_n) \right] +\log\log(\bar{z}_n/\underline{z}_n) \lesssim  n \varepsilon_n^2 = n^{1-2\gamma} [\log n]^{2t}
$$
that is, if $b_n/a_n \lesssim e^{B n^A}$ for some $A, B>0$, $\bar{z}_n\lesssim  n^D$ for some $D>0$,
$$
 n^{1-2\gamma} [\log n]^{2t-1} \left\{  \log n + \log\log n   + C\right\} + \log n \leq  C  n^{1-2\gamma} [\log n]^{2t}
$$
which holds for large enough constant $C$, and if $\log\log(\bar{z}_n/\underline{z}_n) \lesssim n^{1-2\gamma} [\log n]^{2t}$.
 In our case, $b_n/a_n = \exp( C(n(\log n)^{5\beta+1})^{ 1/(2\beta+1)} )\lesssim e^{B n^A}$ with any $A> 1/(2\beta+1)$, and $\bar{z}_n\lesssim  n^D$ with any $D > 2/(2\beta+1)$.

  Choosing $\epsilon_0$ large enough in the definition of $\epsilon_n$ completes the proof of \eqref{eq:EntropyConditions}, and hence the proof of Theorem \ref{th:rate}.

\end{proof}

We extend the definition of $K_z$ in the following way: for any distribution $P$, define
 $$
 K_z * P (x) =   \int_0^\infty g_{z,\epsilon}(x) d P(\epsilon).
 $$
If $P$ has Lebesgue density $f$, then $K_z * P (x)= K_z f(x)$.

\begin{lemma}\label{lem:DiscrApproxBoundary}
Assume that the probability density $f_0 \in \mathcal P_{\alpha}(\beta, L, \gamma,  C_0, C_1, e, \Delta)$   and that there exist $C>0$ and $\rho_1>0$  such that
 $$ \int_{x}^\infty y^2 f_0(y) dy  \leq C (1+x)^{-\rho_1}. $$ Then, for any $\epsilon_0>0$, there exist $ \kappa, C_p >0$ such that
 \begin{equation*}
 \Pi\left( \KL( f , K_z*P) \leq \epsilon_n^2; V( f, K_z*P)\leq \epsilon_n^2 \log n \right) \geq  C_p e^{- \kappa n^{1/(2\beta+1)} (\log n)^{2q} },
 \end{equation*}
 for any prior satisfying condition $(\mathbf{\mathcal P})$ and $n\geq 1$ where $\epsilon_n = \epsilon_0 n^{-\beta/(2\beta+1)} (\log n)^{q}$,  with $q$ defined in Theorem \ref{th:rate}.    The constants $\kappa$ and $C_p $ depend on $\Pi$, $\epsilon_0$  and on the constants defining the functional class.
 \end{lemma}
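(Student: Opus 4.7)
The plan is to verify the prior-mass condition of \citet{ghosal:ghosh:vdv:00} via a three-step construction: a deterministic continuous-mixture approximant of $f_0$, a discretization of that approximant into a finitely-supported mixing measure $P^{\ast}$, and a prior-mass lower bound on a suitable neighbourhood of $(P^{\ast},z_{n})$. Pick $z_{n}$ so that $B z_{n}^{-\beta/2}\asymp \epsilon_{n}/(\log n)^{1/2}$, i.e.\ $z_{n}\asymp n^{2/(2\beta+1)}(\log n)^{(2q-1)/\beta}$. By Proposition \ref{lem:approx:1} there exists a density $\bar f_{\beta}$ with $D_{H}(K_{z_{n}}\bar f_{\beta},f_{0})\lesssim z_{n}^{-\beta/2}$, and in particular $\|K_{z_{n}}\bar f_{\beta}-f_{0}\|_{1}\lesssim \epsilon_{n}/\sqrt{\log n}$; this is the starting point.

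Next I would restrict $\bar f_{\beta}$ to a truncation interval $[a_{n},b_{n}]$ chosen so that $\int_{[a_{n},b_{n}]^{c}}\bar f_{\beta}(\epsilon)d\epsilon\lesssim\epsilon_{n}$ (using \eqref{tailcond} transferred through $\bar f_{\beta}$, which inherits polynomial tails from $f_{0}$), and partition $[a_{n},b_{n}]$ into $N\asymp\sqrt{z_{n}}\log n$ cells of width $\asymp \epsilon_{n}z_{n}^{-1/2}$. Setting $P^{\ast}=\sum_{j=1}^{N}p_{j}^{\ast}\delta_{\epsilon_{j}^{\ast}}$ with $p_{j}^{\ast}$ equal to the $\bar f_{\beta}$-mass of cell $j$ and $\epsilon_{j}^{\ast}$ its midpoint, the Lipschitz-in-$\epsilon$ estimates on $g_{z_{n},\epsilon}(x)$ yield $\|K_{z_{n}}\bar f_{\beta}-K_{z_{n}}\ast P^{\ast}\|_{1}\lesssim\epsilon_{n}$, so by the triangle inequality $\|f_{0}-K_{z_{n}}\ast P^{\ast}\|_{1}\lesssim\epsilon_{n}$. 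To turn this into control of $\KL(f_{0},K_{z_{n}}\ast P^{\ast})$ and of $V(f_{0},K_{z_{n}}\ast P^{\ast})$, I would use the standard device of \citet{ghosal:ghosh:vdv:00}: on the bulk the log-ratio is controlled by $\log\{f_{0}/(K_{z_{n}}\ast P^{\ast})\}\lesssim\log n$ because $K_{z_{n}}\ast P^{\ast}(x)\gtrsim f_{0}(x)\,n^{-c}$ there, and on the tail $\{x>T_{n}\}$ the contribution is $\lesssim\epsilon_{n}^{2}\log n$ thanks to the moment bound \eqref{tailcond} together with the uniform lower bound $K_{z_{n}}\ast P^{\ast}(x)\gtrsim e^{-cz_{n}}$ on $[a_{n},b_{n}]$. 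Calibrating $T_{n}$, $a_{n}$, and $b_{n}$ delivers $\KL\lesssim\epsilon_{n}^{2}$ and $V\lesssim\epsilon_{n}^{2}\log n$.

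The prior-mass step then evaluates $\Pi$ on a neighbourhood of $(z_{n},P^{\ast})$ within which the above bounds persist. For the Dirichlet-process prior, the classical estimate bounds $\Pi_{1}\{P:\,|P(A_{j})-P^{\ast}(A_{j})|\le\delta,\,\forall j\}$ from below by $\exp(-cN\log(1/\delta))$ with $\delta\asymp\epsilon_{n}/N$, provided the base measure $G$ charges each cell with mass $\gtrsim N^{-1}(a_{n}/b_{n})^{a_{1}}$, which follows from \eqref{cond:priordens}; for the finite-mixture prior, one pays $\pi_{K}(N)\gtrsim e^{-c_{2}N(\log N)^{\rho_{2}}}$ for choosing $K=N$ and an identical Dirichlet-concentration factor. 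The prior on $z$ contributes $\Pi_{z}([z_{n},2z_{n}])\gtrsim\exp(-c\sqrt{z_{n}}(\log z_{n})^{\rho_{z}})$ by \eqref{eq:PriorZ}. Multiplying, the leading exponent is the maximum of $N\log n\asymp\sqrt{z_{n}}(\log n)^{2}$ and $\sqrt{z_{n}}(\log z_{n})^{\rho_{z}}$, and with the chosen $z_{n}$ this equals $\kappa n^{1/(2\beta+1)}(\log n)^{2q}=\kappa n\epsilon_{n}^{2}$, the two cases in the definition of $q$ corresponding exactly to which of the two terms dominates.

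The main obstacle is the $L_{1}$-to-$\KL/V$ conversion under only the polynomial tail hypothesis \eqref{tailcond}: unlike the Gaussian-mixture setting of \citet{kruijer:rousseau:vdv:10}, one cannot appeal to exponential tails to trivialize the uniform lower bound on $K_{z_{n}}\ast P^{\ast}/f_{0}$ outside a compact set. Instead one has to balance the cell width, the truncation $T_{n}$, and the endpoints $a_{n},b_{n}$ so that (i) the discrete mixture stays comparable to $f_{0}$ on a set capturing all but $\epsilon_{n}^{2}$ of $f_{0}$'s second moment, and (ii) the extra $\log n$ absorbed by the variance is exactly what appears in the $\epsilon_{n}^{2}\log n$ hypothesis of Theorem 2.1 of \citet{ghosal:ghosh:vdv:00}. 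This delicate calibration is what determines the exponent $q$ in $\epsilon_{n}$.
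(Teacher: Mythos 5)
Your three-step skeleton (continuous approximation via Proposition~\ref{lem:approx:1}, discretization, prior-mass lower bound) matches the paper, and your prior-mass computation and the calibration of $z_n$ and $q$ are essentially correct. The genuine gap is in the discretization step, and it is not a matter of detail: the midpoint discretization you propose cannot deliver the required approximation with only $O(\sqrt{z_n}\log n)$ components.

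First, your numbers are internally inconsistent: $N\asymp\sqrt{z_{n}}\log n$ cells of width $\asymp\epsilon_{n}z_{n}^{-1/2}$ cover an interval of length $\asymp N\cdot\epsilon_{n}z_{n}^{-1/2}\asymp\epsilon_{n}\log n\to 0$, not the truncation interval $[a_{n},b_{n}]$ (which must stretch out polynomially in $z_n$ to capture the polynomial tails). If instead you use geometrically spaced cells with \emph{relative} width $\delta$, the Lipschitz estimate in Lemma~\ref{L1:geps}, $\|g_{z,\epsilon_{1}}-g_{z,\epsilon_{2}}\|_{1}\lesssim\sqrt{z}\,|\epsilon_{1}/\epsilon_{2}-1|$, forces $\delta\lesssim\epsilon_{n}/\sqrt{z_{n}}$ to get $\|K_{z_n}\bar f_\beta - K_{z_n}\ast P^\ast\|_1\lesssim\epsilon_n$, hence $N\gtrsim\sqrt{z_{n}}(\log z_{n})/\epsilon_{n}$, which is polynomially larger than $\sqrt{z_{n}}\log n$ and destroys the prior-mass exponent (you would pay $\exp(-cN\log n)\ll\exp(-\kappa n\epsilon_n^2)$). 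Second, even if the counting worked, $\|f_{0}-K_{z_{n}}\ast P^{\ast}\|_{1}\lesssim\epsilon_{n}$ is too weak: Lemma~B2 of \citet{ghosal:shen:tokdar} converts a \emph{squared Hellinger} bound $D_{H}^{2}\lesssim\epsilon_{n}^{2}$ into $\KL\lesssim\epsilon_{n}^{2}\log n$, and since $D_{H}^{2}\leq\|\cdot\|_{1}$ you would need $L_{1}\lesssim\epsilon_{n}^{2}$, a full extra factor of $\epsilon_{n}$.

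The paper resolves both problems by matching moments rather than masses: following Lemma~A1 of \citet{ghosal:vdv:01}, on each of the $J_{z}\asymp\sqrt{z\log z}$ geometric cells it constructs a sub-probability supported on $O(\log z)$ atoms whose first $O(\log z)$ moments agree with those of $\bar f_\beta$ restricted to the cell (Lemma~\ref{lem:discrete1}). Because $g_{z,\epsilon}(x)$ admits a polynomial expansion in $\epsilon$ of degree $O(\log z)$ with error $z^{-\kappa}$ for arbitrary $\kappa$, this yields the \emph{pointwise} bound $|K_{z}\ast\tilde P_{N}-K_{z}f_{\beta}|\lesssim z^{-\kappa}$ and thus $\|K_{z}\ast\tilde P_{N}-K_{z}f_{\beta}\|_{1}\lesssim z^{-\kappa+b}$, which can be made $\lesssim z^{-2\beta}\asymp\epsilon_{n}^{2}$ by taking $\kappa\geq 2\beta+b$ --- all with only $N\lesssim\sqrt{z}(\log z)^{3/2}$ points. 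That is the ingredient your proposal is missing, and without it the prior-mass bound and the $\KL/V$ bound cannot both hold.
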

Lemma \ref{lem:DiscrApproxBoundary} is proved in Section \ref{sec:pr:KLNeigh}.

As in \citet{ghosal:shen:tokdar}, we control the entropy of the following approximating sets.
\begin{lemma}\label{prop:Entropy}
Fix $\varepsilon >0$, $J\in \mathbb{N}$,  $a,b>0$, $0< \underline{z}< \bar{z}<\infty$ and introduce the following class of densities:
\begin{eqnarray}\label{def:sieveQ}
Q = Q(\varepsilon, J , a, b, \underline{z}, \bar{z}) =  \left\{
\begin{array}{ll}
 f = & \sum_{j=1}^{\infty} \pi_j g_{z,\epsilon_j}: \, \sum_{j>J} \pi_j <\varepsilon,  \,\, z \in [\underline{z}, \bar{z}], \\
  & \epsilon_j \in [a, a+b] \, \text{ for } \, j=1,\ldots,J
\end{array}
 \right\}.
\end{eqnarray}

Then, for $\varepsilon \leq  \sqrt{\bar{z}}$,
\begin{eqnarray*}
\log N(5\varepsilon, Q, \| \cdot \|_1) &\leq& C+ J \left[ \log \log\left(\frac b  a \right)   - 3 \log \varepsilon + 0.5 \log (\bar{z})\right]
  +\log \log\left(\frac {\bar{z}}{\underline{z}}\right),\\
\Pi(Q^c) &\leq&  \left( \frac{e m}{J} \log(1/\varepsilon) \right)^J + J (1 -G([a, a+b])) + 1 - \Pi_z([\underline{z}, \bar{z}]),
\end{eqnarray*}
where $\Pi$ is a prior satisfying condition $({\mathcal P})$.
\end{lemma}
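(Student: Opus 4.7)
The plan is to prove the two assertions separately: the entropy bound by an explicit discretization of the parameter space, and the prior mass of $Q^c$ by a union bound together with a stick-breaking tail computation.

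For the entropy bound, I would discretize the three ingredients of an element $f = \sum_{j \geq 1} \pi_j g_{z,\epsilon_j}$ of $Q$ as follows.
(i) Truncate the sum at $J$ (with $L_1$-error at most $\varepsilon$ by the definition of $Q$), and cover the $J$-simplex in $\ell_1$ at resolution $\varepsilon$, yielding a factor of $(C/\varepsilon)^J$.
(ii) Discretize each $\epsilon_j \in [a, a+b]$ on a multiplicative grid adapted to the continuity estimate
$\|g_{z,\epsilon} - g_{z,\epsilon'}\|_1 \lesssim \sqrt{z}\,|\log(\epsilon'/\epsilon)|$,
which follows from the rescaling $g_{z,\epsilon}(\cdot) = \epsilon^{-1} g_{z,1}(\cdot/\epsilon)$ combined with the Gaussian-scale heuristic $\mathrm{Gamma}(z,z) \approx \mathcal{N}(1, 1/z)$.
(iii) Discretize $z \in [\underline{z}, \bar{z}]$ using the analogous estimate
$\|g_{z,\epsilon} - g_{z',\epsilon}\|_1 \lesssim \sqrt{z}\,|\log(z'/z)|$,
which comes from a Fisher-information type expansion in the shape parameter. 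Combining the triangle inequality across these three error sources and multiplying the corresponding grid sizes produces a cover whose log-cardinality gives the claimed bound: the $\epsilon_j$-grid at step of order $\varepsilon^2/\sqrt{\bar z}$ yields the $J\log\log(b/a) + 0.5\,J\log\bar{z} - 2J\log\varepsilon$ contribution, the weight cover adds $-J\log\varepsilon$, and the coarse $z$-grid contributes $\log\log(\bar{z}/\underline{z})$.

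For the prior mass of $Q^c$, decompose $Q^c = A_1 \cup A_2 \cup A_3$ with $A_1 = \{\sum_{j > J} \pi_j \geq \varepsilon\}$, $A_2 = \{\exists\, j \leq J:\, \epsilon_j \notin [a, a+b]\}$, $A_3 = \{z \notin [\underline{z}, \bar{z}]\}$. Since the $\epsilon_j$ are i.i.d.\ from $G$, a union bound gives $\Pi(A_2) \leq J(1 - G([a, a+b]))$; by definition $\Pi(A_3) = 1 - \Pi_z([\underline{z}, \bar{z}])$. For $A_1$ under the Dirichlet process with stick-breaking, $\sum_{j > J} \pi_j = \prod_{i \leq J}(1 - V_i)$ with $V_i \stackrel{\mathrm{iid}}{\sim} \mathrm{Beta}(1, m)$, so $-\log(1 - V_i) \sim \mathrm{Exp}(m)$ and $S_J := \sum_{i \leq J}(-\log(1 - V_i)) \sim \mathrm{Gamma}(J, m)$. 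The event $A_1$ is then $\{S_J \leq \log(1/\varepsilon)\}$, and the small-ball bound $P(\mathrm{Gamma}(J, m) \leq t) \leq (mt)^J/J! \leq (em t/J)^J$ (via Stirling) yields the $(em \log(1/\varepsilon)/J)^J$ term. The finite mixture case is handled analogously: when $K \leq J$ the tail sum vanishes, and for $K > J$ one exploits concentration of the Dirichlet tail weights, noting that the prior on $K$ has exponential decay by assumption and can be absorbed into the constants.

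The main obstacle is the careful bookkeeping of the discretizations to match the precise form of the entropy bound, and in particular the $\log\log(\bar{z}/\underline{z})$ dependence on the $z$-range. This forces the use of a fairly coarse logarithmic grid on $z$, compensated by a finer $\epsilon_j$-grid (step $\varepsilon^2/\sqrt{\bar z}$ rather than $\varepsilon/\sqrt{\bar z}$) so that local perturbations of the shape parameter are partially absorbed by adjustments in the locations when one passes through the triangle inequality. Establishing the precise $L_1$-continuity estimates for $g_{z,\epsilon}$ with the correct dependence on $z$, and verifying that they compose correctly through the mixture, is the technical heart of the argument.
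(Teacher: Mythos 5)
Your overall structure—explicit discretization of $(\pi, \epsilon_j, z)$ and a union-bound-plus-stick-breaking argument for $\Pi(Q^c)$—matches the paper's. The $\Pi(Q^c)$ half is essentially correct, including the $\mathrm{Gamma}(J,m)$ small-ball computation for the Dirichlet process stick-breaking tail and the corresponding finite-mixture case. The $\epsilon_j$-continuity estimate $\|g_{z,\epsilon}-g_{z,\epsilon'}\|_1 \lesssim \sqrt z\,|\epsilon'/\epsilon-1|$ is also right (Fisher information in the location-scale parameter $\epsilon$ scales like $z/\epsilon^2$).

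However, your continuity estimate in the shape parameter, $\|g_{z,\epsilon}-g_{z',\epsilon}\|_1 \lesssim \sqrt z\,|\log(z'/z)|$, is wrong, and this error propagates into an internally inconsistent bookkeeping. The Fisher information in $z$ (at fixed $\epsilon$) is $\psi_1(z)-1/z$, where $\psi_1$ is the trigamma function; since $\psi_1(z)=1/z+1/(2z^2)+O(z^{-3})$ for large $z$, this is $O(z^{-2})$, not $O(z^{-1})$. Consequently $\KL(g_{z,\epsilon},g_{z',\epsilon}) \asymp (z'/z-1)^2$ and the $L_1$ distance is $\lesssim |z'/z-1|$ with \emph{no} $\sqrt z$ factor. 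This is exactly what the paper establishes via the trigamma expansion. Given this, one simply takes a multiplicative grid on $z$ with step $\asymp\varepsilon$ (not step $O(1)$), giving $\log L = \log\log(\bar z/\underline z) - \log\varepsilon + O(1)$, and the extra $-\log\varepsilon$ is folded into the $-3J\log\varepsilon$ term since $J\geq 1$; the $\epsilon_j$-grid needs only step $\varepsilon/\sqrt{2z}$, not $\varepsilon^2/\sqrt{\bar z}$.

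Your proposed ``compensation'' mechanism—a coarse $z$-grid with step $O(1)$ absorbed by a finer $\epsilon_j$-grid—does not work and is not how the proof goes. If the multiplicative step in $z$ is a constant, the resulting $L_1$ perturbation of a single kernel is $O(1)$ (by your own estimate, even $O(\sqrt{\bar z})$), which cannot be reduced below $\varepsilon$ by re-choosing the $\epsilon_j$: a relative change in $z$ alters the relative variance $\epsilon^2/z$ of the kernel and cannot be traded off against a shift in its mean $\epsilon$. The triangle inequality decomposes the error into independently bounded pieces; it does not allow one source of error to be offset by another. With the correct trigamma-based estimate there is nothing to compensate for, and the bound falls out directly.
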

The proof of Lemma \ref{prop:Entropy} is given in Section \ref{sec:pr:propEnt}.
 In the next section we prove Lemma \ref{lem:DiscrApproxBoundary}.

\subsection{Proof of Lemma \ref{lem:DiscrApproxBoundary}} \label{sec:pr:KLNeigh}

Consider $P_N$ the discrete distribution constructed in Lemma \ref{lem:KL:PN}, which we write as
 $P_N  = \sum_{j=1}^{N} p_{j} \delta_{u_{j}}$,  with $N   \leq N_0 \sqrt{z}(\log z)^{3/2}$, $ u_j \in [e_z, E_z]$, $u_1\leq u_2 \leq \cdots, \leq u_N$, $u_{i+1} - u_i > z^{-A}$ and $p_j > z^{-A}$ for some $A$ and with $e_z = z^{- a} $ and $E_z = z^b$, with $a,b$ defined as in Lemma \ref{lem:KL:PN}.

 Set $U_{j}  = [(u_{j}+u_{j-1})/2, (u_{j}+u_{j+1})/2]$, with $u_{0} = u_{1}$ and $u_{N+1}= u_{N}$,  $U_0 = \R^+ \setminus \cup_{j=1}^N U_j$ and define for $A>0$
$$\mathcal P_z = \{ P: \, P(U_{j})/p_{j} \in ( 1 - 2 z^{-A}, 1 - z^{-A}) \quad \forall 1 \leq j  \leq N \}. $$
Note that for all $P \in \mathcal P_z$, $P(U_0) \in ( z^{-A}, 2z^{-A})$. 

Let $z_n =n^{2/(2\beta + 1)} (\log n)^{t}$ with $t =2q - 5 $ if $\rho_z \leq 5/2$ and $t = 2(q-\rho_z)$ if $\rho_z > 5/2$ and set  $I_n = (z_n, 2z_n)$. Then for all $z\in I_n$ and all
$P \in \mathcal P_{2z_n}$, Lemma  \ref{lem:KL:discrete2} implies that
\begin{equation*}
\begin{split}
\KL(f, K_z*P)  &\leq n^{-2\beta/(2\beta+1)} (\log n)^{-2\beta t +1} \asymp  \epsilon_n^2 ,\\
 V(f, K_z*P) &\leq C_2n^{-2\beta/(2\beta+1)} (\log n)^{-2\beta t +2} \asymp \epsilon_n^2 \log n,
\end{split}
 \end{equation*}
 if $A$ is large enough (depending on $\beta, L, \gamma,  C_0, C_1, e, \Delta$).

 To prove Lemma \ref{lem:DiscrApproxBoundary}, we thus need to  bound from below $\Pi(I_n\times \mathcal P_n)$.
 Denote $\alpha_{j} = mG(U_{j})$, $j=0, \cdots, N$ with $N \asymp \sqrt{z_n} (\log z_n)^{3/2}  $ in the $DP(m, G)$ type prior case.
 Note that for large $u_{j-1} \gtrsim E_z $,
 \begin{eqnarray*}
 \alpha_{j} &=& m G(U_{j}) =m\int_{(u_{j-1}+u_j)/2}^{(u_j+u_{j+1})/2} g(u) du \geq C  \int_{(u_{j-1}+u_j)/2}^{(u_j+u_{j+1})/2} u^{-a_1} du
\geq C  [ E_z]^{1-a_1}\\
&=& C z^{b(1-a_1)},
  \end{eqnarray*}
and similarly $ \alpha_{j} \leq  C z^{-b(a_1'-1)}$.
For small $u_{j+1} \lesssim  e_z$,
 $$
 \alpha_{j} = m G(U_{j})\geq C \int_{(u_{j-1}+u_j)/2}^{(u_j+u_{j+1})/2} u^{a_0'} du \geq C  [e^z]^{a_0'+1}  =   C z^{-a(a_0'+1)},
  $$
  and similarly $\alpha_j \leq  C z^{-a(a_0+1)}$.
 Hence we have $\alpha_j \geq C z^{-B}$ with $B=\min(a(a_0+1), b(a_1'-1))$.

In particular, we have that $\sum_j\alpha_j=m$ for the DP prior, and
$$
\sum_{j=1}^N (-\log \alpha_j) \leq \sum_{j=1}^N B \log z = N B \log z \leq  N B \log N.
$$
 Also, we have that
$$
\sum_{i=1}^N \alpha_i (\log \alpha_i)_+ =\sum_{i=1}^N m p_i (\log m p_i)_+\leq m\log m 
$$

Note that for $x \in (0,1]$, $\Gamma(x)\leq x^{-1}$, and for $x>1$,
 $\Gamma(x) < \exp(  (x-1/2) \log x -(x-1))$ \cite{GammaFnInequalities}.


Adapting Lemma 6.1 of \citet{ghosal:ghosh:vdv:00} to the case of hyperparameters $\alpha_i$ of the Dirichlet distribution possibly greater than $1$ , we obtain:
\begin{equation*}
\begin{split}
 \Pi( \mathcal P_n) &\geq \frac{  \Gamma(m) }{ \prod_{i} \Gamma( \alpha_{i})}z_n^{-A(\alpha_0-1)}2^{-(\alpha_0-1)_-}
  \int \prod_{i=1}^N\1_{x_i \in (p_i(1-2z_n^{-A}), p_i(1 -z_n^{-A}))}x_i^{\alpha_i-1}d x_i \\
  &\gtrsim z_n^{-A(\alpha_0-1+N)} \frac{  \Gamma(m) }{ \prod_{i} \Gamma( \alpha_{i})} \prod_{i=1}^N p_i^{\alpha_i}  \\
&  \gtrsim  z_n^{-A( N + m   + B N \log N) } \gtrsim z_n^{-(B+1)A N \log N } \gtrsim e^{- (B+2)AN_0 \sqrt{z_n} (\log z_n)^{5/2}}
\end{split}
\end{equation*}
 Using condition $(\mathcal P)$ on $\Pi_z$ we have that
$\Pi_z(I_n) \gtrsim e^{-c \sqrt{z_n} (\log z_n)^{\rho_z}}$, replacing $z_n$ by its expression terminates the proof of Lemma~\ref{lem:DiscrApproxBoundary} for the DP prior.

For the mixture prior satisfying $(\mathcal P)$, we have
\begin{equation*}
\begin{split}
 \Pi( \mathcal P_n) &\geq \frac{  \Gamma(\sum_i \alpha_i) }{ \prod_{i} \Gamma( \alpha_{i})}z_n^{-A(\alpha_0-1)}2^{-(\alpha_0-1)_-}
  \int \prod_{i=1}^N\1_{x_i \in (p_i(1-2z_n^{-A}), p_i(1 -z_n^{-A}))}x_i^{\alpha_i-1}d x_i \\
  &\gtrsim z_n^{-A(\alpha_0-1+N)} \frac{  \Gamma(\sum_i \alpha_i) }{ \prod_{i} \Gamma( \alpha_{i})} \prod_{i=1}^N p_i^{\alpha_i}  \quad
 \gtrsim  B^N z_n^{-A(\alpha_0-1+N)}\\
  & \times \exp\left(- A\log z_n\sum_{i: \alpha_i> 1} \alpha_i+ \sum_i (-\log \alpha_i)_+ - \sum_{i: \alpha_i>1} (\alpha_i-1/2)\log \alpha_i \right)\\
&\gtrsim \exp\left( -  C(N + N\log n) \right),
\end{split}
\end{equation*}
which terminates the proof.


\subsection{Proof of Lemma \ref{prop:Entropy}} \label{sec:pr:propEnt}


Take any $f\in Q$, that is, $f = \sum_{j=1}^{\infty} \pi_j g_{z,\epsilon_j}$ such that $\sum_{j>J} \pi_j <\varepsilon$, $z \in [\underline{z}, \bar{z}]$ and $\epsilon_j \in [a, a+b]$  for $j=1,\ldots,J$.

Fix $\delta_2 = \varepsilon/C$ for the constant $C$ to be defined below, and $\delta_0 =\varepsilon/\sqrt{2z}$.
Let $\hat{A}$ be the following set $\{ a (1+\delta)^k\}_{k=0}^{K}$ with $ K =K_z= \lceil \log(1+b/a)/\log(1+\delta)\rceil$, and $\hat{Z} =\{  \underline{z}(1+\delta_2)^\ell\}_{\ell=0}^L$ with  $ L = \lceil \log(\bar{z}/\underline{z})/\log(1+\delta_2)\rceil$.
 In particular, for any $z \in [\underline{z}(1+\delta_2)^\ell, \underline{z}(1+\delta_2)^{\ell+1})$ for some $\ell \in \{0,1,\ldots, L\}$, $\inf_{\hat{z} \in \hat{Z}} |\hat{z}/z-1| \leq \delta_2$. Let $\hat{S}$ be an $\varepsilon$-net for $S = \{ (p_1,\ldots, p_J): \, p_j = \pi_j/(1-\sum_{j=1}^J \pi_j)  \, \forall j \}$.

Define
\begin{eqnarray*}
\hat{Q} = \left\{
\begin{array}{l}
\hat{f}= \sum_{j=1}^{J} \hat\pi_j g_{\hat{z},\hat\epsilon_j} \text{ where } \hat{z}\in \hat{Z},  \quad  |\hat{z}/z -1| < \delta_2, \quad \\
\quad  \hat\epsilon_j \in \hat{A}, \, j=1,\ldots,J, \,\,\max_{j=1,\ldots,J}|\hat\epsilon_j/\epsilon_j-1| < \delta =\varepsilon/\sqrt{2z},\\
\quad  \hat\pi=(\hat\pi_j) \in \hat{S} \text{ and } \sum_{j=1}^{J} |\hat\pi_j -\tilde\pi_j| < \varepsilon, \quad \text{with }  \tilde\pi_j = \pi_j/[\sum_{j=1}^J \pi_j]
\end{array}
 \right\}.
\end{eqnarray*}

Now we show that $\hat{Q}$ is a $5\varepsilon$-net of $Q$:
\begin{gather*}
||\sum_{j=1}^{J} \hat\pi_j g_{\hat{z},\hat\epsilon_j} - \sum_{j=1}^{\infty} \pi_j g_{z,\epsilon_j}||_1 \leq
||\sum_{j=1}^{J} \hat\pi_j g_{z,\hat\epsilon_j} - \sum_{j=1}^{J} \hat\pi_j g_{\hat{z},\hat\epsilon_j}||_1 +
||\sum_{j>J} \pi_j g_{z,\epsilon_j}||_1\\
+ ||\sum_{j=1}^{J}  \pi_j (g_{z,\hat\epsilon_j}-g_{z,\epsilon_j})||_1+
||\sum_{j=1}^{J} (\hat\pi_j - \pi_j) g_{z,\hat\epsilon_j}||_1 \\
\leq
\sum_{j=1}^{J} \hat\pi_j ||g_{z,\hat\epsilon_j} -  g_{\hat{z},\hat\epsilon_j}||_1 +
\sum_{j>J} \pi_j  + \sum_{j=1}^{J}  \pi_j ||g_{z,\hat\epsilon_j}-g_{z,\epsilon_j}||_1+
 \sum_{j=1}^{J} |\hat\pi_j - \pi_j|.
\end{gather*}
The second term is less than $\varepsilon$ by the definition of $Q$. The last term is bounded in the same way as in \cite{ghosal:shen:tokdar} by
$\sum_{j=1}^{J} |\hat\pi_j - \pi_j| \leq 2\varepsilon$.

 To bound the third term, we first bound the $L_1$ distance between the two gamma densities using Lemma \ref{L1:geps}:
\begin{eqnarray*}
 ||g_{z,\hat\epsilon_j}-g_{z,\epsilon_j}||_1 &\leq& \sqrt{2\KL(g_{z,\hat\epsilon_j}, g_{z,\epsilon_j})} \leq \sqrt{2 z} \delta = \varepsilon
\end{eqnarray*}
by the definition of $\delta$.

To bound the first term, we bound the  Kullback-Leibler distance between the corresponding probability distributions:
\begin{eqnarray*}
\KL(g_{z,\epsilon},  g_{\hat{z},\epsilon}) &=&   \log\left( \frac{\hat{z}^{-\hat{z}}\Gamma(\hat{z})}{z^{-z} \Gamma(z)} \right) -(\hat{z}-z) \left[    \Gamma'(z)/\Gamma(z) - \log z -1\right] \\
&=& 0.5 (\hat{z}-z)^2 [-z_c^{-1} + \psi_1(z_c)]
\end{eqnarray*}
for some $z_c$ between $z$ and  $\hat{z}$ where $\psi_1(z) = (\log \Gamma(z))''$ is the trigamma function. It is known that as $z\to 0$, $\psi_1(z) = \gamma+z^{-2} +o(1)$, and as $z\to \infty$, $\psi_1(z) = z^{-1}+ 0.5z^{-2} +o(z^{-2})$ which implies that for both $z$ large and small,
$$
\psi_1(z) -1/z = O(z^{-2}),
$$
which implies that we can bound the  Kullback-Leibler distance as
\begin{eqnarray*}
\KL(g_{z,\epsilon},  g_{\hat{z},\epsilon}) = 0.5 (\hat{z}-z)^2 [-z_c^{-1} + \psi_1(z_c)]\leq C (\hat{z}/z-1)^2 \leq C \delta_2^2
\end{eqnarray*}
for an appropriate constant $C$. Therefore, the first term is bounded by
\begin{eqnarray*}
\sum_{j=1}^{J} \hat\pi_j ||g_{z,\hat\epsilon_j} -  g_{\hat{z},\hat\epsilon_j}||_1 &\leq& \sum_{j=1}^{J} \hat\pi_j \sqrt{2 \KL(g_{z,\hat\epsilon_j},  g_{\hat{z},\hat\epsilon_j})} \leq  \sqrt{2C}\delta_2 = \varepsilon
\end{eqnarray*}
by the definition of $\delta_2$.


Now we study cardinality of set $\hat{A}$. For each  $z \in [\underline{z}, \bar{z}]$ ,
\begin{eqnarray*}
 K_z &\leq& 1 +   \log(1+b/a)/\log(1+\varepsilon/\sqrt{2z})
 \lesssim \frac{\log(b/a)}{\log(1+\varepsilon/\sqrt{2\bar{z}})} \lesssim \frac{\sqrt{\bar{z}} \log(b/a)}{ \varepsilon}
  \end{eqnarray*}
for large $b/a$ to due to $\log(1+x) \geq x (1-0.5/\sqrt{2})$ for $x\leq 1/\sqrt{2}$ and assuming that $\varepsilon \leq  \sqrt{\bar{z}}$.

Cardinality of $\hat{S}$ is $\lesssim \varepsilon^{-J}$  \citep[proof of Proposition 2]{ghosal:shen:tokdar}.

Then, for $\varepsilon \leq  \sqrt{\bar{z}}$, the cardinality of $\hat{Q}$ is bounded by
\begin{eqnarray*}
|\hat{Q}| &\leq& |\hat{S}| \sum_{\ell=1}^L |K_z|^J
\lesssim  \varepsilon^{-J} L \left[ \frac{\sqrt{\bar{z}} \log(b/a)}{ \varepsilon}\right]^{J}
\lesssim  \left[\frac{\log(b/a)\sqrt{\bar{z}}}{\varepsilon^2} \right]^{J} \frac{\log(\bar{z}/\underline{z})}{ \varepsilon}
\end{eqnarray*}
  due to $\delta_2 = C \varepsilon$ and by the definition of $L$.

Therefore, combining all the inequalities together, we obtain that
\begin{eqnarray*}
||\sum_{j=1}^{J} \hat\pi_j g_{\hat{z},\hat\epsilon_j} - \sum_{j=1}^{\infty} \pi_j g_{z,\epsilon_j}||_1 &\leq& 5 \varepsilon,
\end{eqnarray*}
and hence $\hat{Q}$ is a $5\varepsilon$-net of $Q$, with
\begin{eqnarray*}
\log N(5\varepsilon, Q, ||\cdot||_1) &\leq& C+ J \left[ \log \log(b/a)   - 2 \log \varepsilon + 0.5 \log (\bar{z})\right]
  +\log \log(\bar{z}/\underline{z})-\log \varepsilon\\
   &\leq& C+ J \left[ \log \log(b/a)   - 3 \log \varepsilon + 0.5 \log (\bar{z})\right]
  +\log \log(\bar{z}/\underline{z}).
\end{eqnarray*}

The second inequality is proved following the same route as in the proof of Proposition 2 in \citet{ghosal:shen:tokdar}.

For the Dirichlet process prior,
\begin{eqnarray*}
\Pi(Q^c) &\leq& \Pi_z([\underline{z}, \bar{z}]^c) + J G( [a, a+b]^c ) +  \Pi(\sum_{j>J} \pi_j >\varepsilon)\\
&\leq& J (1-G( [a, a+b] )) + 1- \Pi_z([\underline{z}, \bar{z}])+ \left( \frac{e m}{J} \log(1/\varepsilon) \right)^J.
\end{eqnarray*}
For the mixture prior,
\begin{eqnarray*}
\Pi(Q^c) &\leq&  \Pi_z([\underline{z}, \bar{z}]^c) + \sum_{k=1}^{J} \pi_K(k)  [G( [a, a+b]^c )]^k +\sum_{k=1}^{\infty} \pi_K(k)\Pi(\sum_{j=J+1}^k \pi_j >\varepsilon)]\\
&\leq&  1- \Pi_z([\underline{z}, \bar{z}])+ J G( [a, a+b]^c ) + \sum_{k=J+1}^{\infty} C e^{-c k (\log k)^{\rho_2}}\\
&\leq&  1- \Pi_z([\underline{z}, \bar{z}])+ J [1-G( [a, a+b] )] \\
&& +  (\log J)^{-\rho_2} e^{-c J (\log J)^{\rho_2}} \frac{c^{-1} C e^{-c (\log J)^{\rho_2}}}{1-e^{-c (\log J)^{\rho_2}}}\\
&\lesssim&  1- \Pi_z([\underline{z}, \bar{z}])+ J [1-G( [a, a+b] )] +  (\log J)^{-\rho_2} e^{-c J (\log J)^{\rho_2}}
\end{eqnarray*}
 since $\rho_2\geq 0$.


\appendix

\section{Proof of Proposition~\ref{lem:approx:1} and related lemmas} 

\subsection{Proof of Proposition~\ref{lem:approx:1}}\label{sec:proof:approx1}

The proof of the proposition is based on the ideas of \citet{rousseau:09}, \citet{kruijer:rousseau:vdv:10} and \citet{ghosal:shen:tokdar}.
First we prove \eqref{pointwise:approx} for $f \in \mathcal P ( \beta, L(\cdot), \gamma, C_0, C_1, e, \Delta) $, and then adapt it for the case $f \in \mathcal P_\alpha ( \beta, L(\cdot), \gamma, C_0, C_1, e, \Delta)$. The proof of \eqref{Hell:contdens} is then presented directly for $f \in  \mathcal P_\alpha ( \beta, L(\cdot), \gamma, C_0, C_1, e, \Delta)$.

\noindent
\underline{1.  Proof of \eqref{pointwise:approx} : Case $\alpha = 1$ }

This case corresponds to $f \in \mathcal P ( \beta, L(\cdot), \gamma, C_0, C_1, e, \Delta) $. We can thus write
$$f(\epsilon)  =  \sum_{j=0}^r \frac{f^{(j)}(x)}{ j !}  ( \epsilon - x)^j + R_1(\epsilon, x)$$
where $|R_1(\epsilon, x)| \leq L(x) |\epsilon - x|^\beta $.
Then, using \eqref{eq:Kzf},
\begin{equation*}
\begin{split}
K_z f(x) &= f(x) I_0(z)+  \sum_{j =1}^r  \frac{  x^jf^{(j)}(x) }{ j ! z^{j/2} }  \mu_j(z) + \frac{R_z(x)}{z^{\beta /2} } ,\\ \quad |R_z(x)| \lesssim   L(x)  x^{\beta }  \left( 1 + \frac{ x^{\gamma} }{ z^{\gamma/2}} \right),
\end{split}
\end{equation*}
where $I_0(z)$ is defined by \eqref{I0} and $\mu_k(z)$ are defined by \eqref{Ik}.
We then have
\begin{equation*}
\Delta_z f(x)  := K_zf(x) - f(x) = \frac{ f(x) }{ z-1 } +   \sum_{j =1}^r  \frac{  x^jf^{(j)}(x) }{ j ! z^{j/2} }  \mu_j(z) + \frac{R_z(x)}{z^{\beta /2} },
\end{equation*}
so that if  $\beta \leq 2$, and since  $\mu_1(z) = O(z^{-H})$ for any $H>0$, we obtain
\begin{equation*}
\Delta_z f(x)  = \frac{ f(x) }{ z-1} + \frac{R_z(x)}{z^{\beta /2} } + O(z^{-H} xf^{(1)}(x)),
\end{equation*}
where the last term only appears if $\beta >1$ and Proposition \ref{lem:approx:1} is verified for $f \in \mathcal P ( \beta, L(\cdot), \gamma, C_0, C_1, e, \Delta) $.  If $\beta \in (2, 4]$, define
 \begin{equation*}
 f_1(x)  = f(x)  - \frac{ f(x) }{ z-1 } - \frac{ x^2 f^{(2)}(x)\mu_2(z)  }{ 2 z },
 \end{equation*}
 then
\begin{eqnarray*}
K_z f_1(x) - f(x) &= -\frac{ \Delta_z f(x) }{ z-1 } - \frac{ \mu_2(z)  \Delta_z(x^2 f^{(2)}(x))  }{ 2 z } + \frac{R_z(x)}{z^{\beta /2} } + O(z^{-H} xf^{(1)}(x)).
\end{eqnarray*}
Note that if $f \in   \mathcal P ( \beta, L(\cdot), \gamma, C_0, C_1, e, \Delta) $, with $\beta >2$, the function  $x \rightarrow x^2 f^{(2)}(x) $ is $r-2$ times continuously differentiable and its derivatives are given by
 \begin{equation}
 (x^2 f^{(2)}(x) )^{(l)} = x^2 f^{(2+l)} (x) + \sum_{j=1}^{2\wedge l } C_l^j 2 \cdot \ldots \cdot (2-j+1) x^{2-j} f^{(2+l-j)}(x)
 \end{equation}
 so that
  $$
  \int_0^\infty\frac{  ( x^l( x^2 f^{(2)}(x) )^{(l)})^2 }{ f(x) } dx \lesssim \sum_{j=2}^r \int_0^\infty\frac{  ( x^j f^{(j)}(x))^2 }{ f(x) } dx < +\infty.
  $$
Hence,
\begin{equation*}
\begin{split}
 & \left| (x^2 f^{(2)})^{(r-2)}(x+y) - (x^2 f^{(2)})^{(r-2)}(x) \right| \\
  &\lesssim \left| (x+y)^2 f^{( r )} (x+y)- x^2 f^{( r )} (x) \right| \\
  & +
 \sum_{l=1}^{2 \wedge (r-2)} \left| (x+y)^{2-l} f^{( r-l )} (x+y)- x^{2-l} f^{( r-l )} (x) \right| \\
 & \lesssim |y| ( |y| +  x ) |f^{ ( r) }(x) | +  ( y^2 + x^2 +1) |y|^{\beta - r+2} L(x) ( 1 + |y|^\gamma)\\
 &+ (|y| + x)|y| |f^{(3)}(x) |
\end{split}
\end{equation*}
where the last term only appears if $r = 3$. Combined with \eqref{I0} and \eqref{Ik}, this leads to a remaining term in the control of $\Delta_z (x^2 f^{(2)}(x))$ bounded by
\begin{eqnarray*}
R(x) &=& Cz^{-(r-1)/2} \left[ x^{r}|f^{ ( r) }(x) | + (x^2+1) x^\beta L(x) ( 1 +  x^\gamma) + x^3 |f^{(3)}(x) |\right]\\
 &:=& z^{-(r-1)/2} \tilde R(x)
\end{eqnarray*}
with $\tilde R$ satisfying
$$\int_0^\infty \frac{\tilde R (x)^2 }{ f(x) } dx < +\infty .$$
$\tilde R$ thus behaves like $R_z(x) $ and we can write
\begin{equation*}
\begin{split}
K_z f_1(x) - f(x) &= -\frac{f(x) }{ (z-1)^2 } - \frac{ \mu_2(z)  x^2 f^{(2)}(x)  }{ z(z-1) } + z^{-\beta/2}R_{2,z}(x)
\end{split}
\end{equation*}
with $$\int_0^\infty \frac{ R_{2,z}(x)^2 }{ f(x) }dx < +\infty$$
uniformly in $z$.
 We can reiterate if $\beta > 4$. At the $k-1$ - th iteration
 $$f_{k-1}(x) = \sum_{j=0}^{2k-2} d_{ k-1,j }(z) \frac{ x^j f^{(j)}(x)  }{ z^{j/2} } $$
 with $d_{k-1,0} = (z/(z-1))^{k-1} $ and for each $j$
  $$\epsilon^j f^{(j)}(\epsilon)  = \sum_{l=0}^{r-j} \frac{ (\epsilon - x)^l }{ l ! } \left\{ \sum_{t=0}^{l \wedge j } x^{j-t} f^{(j+l-t)}(x) a_{j,t} \right\} + \tilde R(\epsilon, x) $$
  so that we can write
  \begin{equation*}
 \begin{split}
 K_z f_{k-1}(x) - f(x) &= \frac{d_{k-1,0}  f(x) }{ z-1}\\
  &+ \sum_{l=1}^{2k-2} \frac{ x^l f^{(l)}(x)  }{ z^{l/2} }
 \left(\frac{ d_{k-1,l} }{ z-1}  + \sum_{j=0}^{l-1}  d_{k-1,j} \sum_{l'=1}^{l-j} \sum_{t =0}^{l'\wedge j} \frac{ a_{j,t} \mu_{l'}(z)}{ z^{t/2}}   \right) \\
  &
   + \frac{ x^{2k} f^{(2k)}(x)  }{ z^{k/2} } \sum_{j=0}^{2k-1}  d_{k-1,j} \sum_{l'=1}^{2k-j} \sum_{t =0}^{l'\wedge j} \frac{ a_{j,t} \mu_{l'}(z)}{ z^{t/2}}\\
 &+ z^{-\beta/2} \tilde R_{k,z}(x)
 \end{split}
 \end{equation*}
 with
 $$\int_0^\infty \frac{ \tilde R_{k,z}(x)^2 }{ f(x) } dx < +\infty $$
 and we define
  \begin{eqnarray*}
  f_k(x) &=& \sum_{l=0}^{2k-2} \frac{ x^l f^{(l)}(x)  }{ z^{l/2} } \left( d_{k-1,l}\frac{z}{ z-1}  -   \sum_{j=0}^{l-1}  d_{k-1,j} \sum_{l'=1}^{l-j} \sum_{t =0}^{l'\wedge j} \frac{ a_{j,t} \mu_{l'}(z)}{ z^{t/2}}   \right) \\
  && -  \frac{ x^{2k} f^{(2k)}(x)  }{ z^{k/2} } \sum_{j=0}^{2k-1}  d_{k-1,j} \sum_{l'=1}^{2k-j} \sum_{t =0}^{l'\wedge j} \frac{ a_{j,t} \mu_{l'}(z)}{ z^{t/2}}
  \end{eqnarray*}
 which corresponds to $f_{k-1} - \Delta_z f_{k-1}$ without the terms $\tilde R_{k,z}(x)$. The recursive relation is
  $$ d_{k,l} = d_{k-1,l}\frac{z}{ z-1}  -   \sum_{j=0}^{l-1}  d_{k-1,j} \sum_{l'=1}^{l-j} \sum_{t =0}^{l'\wedge j} \frac{ a_{j,t} \mu_{l'}(z)}{ z^{t/2}} $$
for $l=0, \cdots , 2k$ with the convention that $d_{k-1, 2k}= 0$.
By construction when $\beta \in (2k-2, 2k]$
 $$\int_0^\infty \frac{ \left(K_z f_{k}(x) - f(x)\right)^2 }{ f(x) } dx  \lesssim z^{-\beta},  $$
that is, we iterate until $k=r_0$.
 Since $\int_0^1 f(x) dx = 1$ and since
 $$\| K_z f_k - f\|_1  \leq \sqrt{\|f\|_1 \int_0^\infty \frac{ \left(K_z f_{k}(x) - f(x)\right)^2 }{ f(x) } dx} \lesssim z^{-\beta/2},$$
 we have that
  $$\int K_z f_k (x) dx = 1 + O(z^{-\beta/2}) \quad \Rightarrow \int_0^\infty f_k(x) dx = 1 + O(z^{-\beta/2}),$$
which proves \eqref{pointwise:approx} for $\tilde f_{\beta, \alpha } = f_{\beta, \alpha} = f_k$.

2.  \underline{Proof of \eqref{pointwise:approx} : Case $\alpha \leq 1$ }

Now let $f\in \mathcal P_{\alpha}( \beta, L(\cdot), \gamma, C_0, C_1, e, \Delta)$ and denote $h(x) = x^{1-\alpha} f(x)$.
Recall that  $C_z = (z -\alpha+1)/z$ and  $\tilde{f}(x) =C_zf(C_zx)$, so $\tilde{f}$ is still a density.
 Note that $C_z = (z -\alpha+1)/z \to 1$ as $z\to \infty$. By  Lemma~\ref{lem:ApproxAlpha},
\begin{eqnarray*}
K_z \tilde{f}(x) &=& x^{\alpha-1} C_{z}^{\alpha-1}  \frac{z^{\alpha}\Gamma(z-\alpha)}{\Gamma(z)} K_{z+1-\alpha} h(x)
\end{eqnarray*}
and $\frac{z^{\alpha}\Gamma(z-\alpha)}{\Gamma(z)} = 1+ O(1/z)$ so we can write
$C_{z}^{\alpha-1}  \frac{z^{\alpha}\Gamma(z-\alpha)}{\Gamma(z)} = 1+ r(z)$,
 where $|r(z)|\leq c/z$ for sufficiently large $z$.

Applying case $\alpha =1$,  for $\mathcal P ( \beta, L(\cdot), \gamma, C_0, C_1, e, \Delta) $ to $h(x)$ with $k$ such that   $\beta \in (2k-2, 2k]$, i.e. $k=r_0$, we have that
 there exists $d_j \in \R$, $j=1, \cdots, k $ such that the function
  $$g_{\beta}(x) =   h(x) - \sum_{j=1}^{2k}\frac{ d_j(z) }{ z^{j/2} } x^j   h^{(j)}(  x), \quad d_j(z) = d_j + O(1/z)  $$
  satisfies
 \begin{gather*}
 \left| K_{z } g_{\beta} (x) -  h(x) \right| \leq z^{-\beta/2} R_{0}(x).
 \end{gather*}
Thus, we can define
 \begin{eqnarray*}
\tilde f_{\beta , \alpha } (x) &=& (1+r(z))^{-1} C_z^{\alpha} x^{\alpha-1}g_{\beta} (xC_z) \\
&=& (1+r(z))^{-1}C_z^{\alpha} x^{\alpha-1}\left[ h(xC_z) - \sum_{j=1}^{r}\frac{ d_{j}(z) }{ z^{j/2} } x^j C_z^j h^{(j)}(xC_z)\right],
\end{eqnarray*}
 which satisfies
 \begin{eqnarray}\label{eq:PointWise1}
 \left| K_{z} \tilde f_{\beta , \alpha } (x) - f(x) \right|  &\leq&  z^{-\beta/2} x^{\alpha-1}R_{0}(x) (1+O(1/z)),
 \end{eqnarray}
 since
 \begin{gather*}
 \left| K_{z} \tilde f_{\beta , \alpha } (x) - f(x) \right|  =   \left|(1+r(z))^{-1} (1+r(z)) x^{\alpha-1} (K_{z+1-\alpha} g_{\beta}(x) - x^{\alpha-1} h(x)) \right| \\
 \leq x^{\alpha-1}  \left| K_{z+1-\alpha} g_{\beta}(x) -  h(x) \right| \leq  z^{-\beta/2} x^{\alpha-1}R_{0}(x) (1+O(1/z)),
 \end{gather*}
 and the first part of \eqref{pointwise:approx} holds with $R(x) \asymp x^{\alpha-1}R_{0}(x)$.
 From the proof of case $\alpha=1$, it follows that $R_0(x)$ has terms proportional to $(x^{2\ell}+1)x^{\beta}(1+x^{\gamma})L_h(x)$ with $1\leq \ell \leq r_0$ and $x^{j} |h^{(j)}(x)|$ for $1\leq j \leq r$. Therefore, the second part of \eqref{pointwise:approx}
 $$
\int_0^{\infty} \frac{R^2(x)}{f(x)} dx \asymp \int_0^{\infty} \frac{R_{0}^2(x)}{h^2(x)} f(x) dx <\infty
$$ is satisfied since
 \begin{eqnarray*}
\int_0^{\infty} \frac{[x^{2\ell}x^{\beta}(1+x^{\gamma})L(x)]^2}{h^2(x)} f(x) dx <\infty,\quad \int_0^{\infty} \frac{[x^{j} |h^{(j)}|]^2}{h^2(x)} f(x)dx <\infty
 \end{eqnarray*}
hold due to $f \in {\mathcal P}_{\alpha}( \beta, L(\cdot), \gamma, C_0, C_1, e, \Delta)$ and inequality $\int g^2(x) d\mu \leq [\int g^p(x) d\mu]^{2/p}$ for $p\geq 2$ for a probability measure $\mu$.

We now prove \eqref{Hell:contdens}.

 3. \underline{Proof of \eqref{Hell:contdens}: general case}
We  follow the same route as in \citet{kruijer:rousseau:vdv:10}. We  bound
\begin{gather}\label{decomp:Hell}
D_H(K_z \bar f_\beta, f)^2 \leq 2[D_H^2(K_z \bar f_\beta, c_\beta f)+D_H^2(f, c_\beta f)]  \leq   2(1-\sqrt{c_\beta})^2 \\
 +  2c_\beta\int  \left(\sqrt{  K_z \left( \tilde f_{\beta,\alpha} \1_{  \tilde f_{\beta,\alpha} \geq \tilde{f}/2} + \frac{ \tilde{f}}{2 }  \1_{ \tilde f_{\beta,\alpha}< \tilde{f}/2} \right)  }-\sqrt{ f(x)}\right)^2 dx. \notag
\end{gather}
We first prove that
  \begin{equation}\label{eq:Cbeta}
   c_\beta^{-1} = \int_0^\infty \left( \tilde f_{\beta,\alpha}(x) \1_{\tilde f_{\beta,\alpha}\geq \tilde f /2} (x) + \tilde f(x)/2 \1_{\tilde f_{\beta,\alpha}< \tilde f /2}(x) \right) dx =1 + O(z^{-\beta/2}).
  \end{equation}
 Define $\tilde{h}(x) = h(C_z x)$ and
    \begin{equation}\label{def:A1}
   \mathcal A_1(a) = \left\{ x:\, \frac{ x^j | \tilde{h}^{(j)}(x)| }{ \tilde{h}(x) } \leq \delta \frac{z^{j/2}}{ (\log z)^{a}}, j=1, \ldots, 2k, \,\, \frac{x^{\beta} L_{\tilde{h}}(x)}{\tilde{h}(x)} \leq  \delta \frac{z^{\beta/2}}{ (\log z)^{a}} \right\},
   \end{equation}
   we have that for $z$ large enough $\{ \tilde f_{\beta , \alpha } < \tilde{f}/2\} \subset \mathcal A_1(a)^c $ (that is, if $\delta \sum_{j=1}^{2k} |d_{k,j}| > (\log z)^{-a}/2$ with $k=r_0$), so that
  \begin{eqnarray*}
    c_\beta^{-1} 
    &\geq& \int  \tilde f_{\beta,\alpha}(x)  dx =   1 + O(z^{-\beta/2}),\\
  c_\beta^{-1}  &=&  1 + O(z^{-\beta/2}) + \int\1_{ \tilde f_{\beta,\alpha}\leq \tilde{f}/2} \left( \frac{\tilde{f} }{ 2} - \tilde f_{\beta,\alpha} \right) dx \\
  &\leq&  (1 + O(z^{-\beta/2}))  + \int\1_{ \tilde f_{\beta,\alpha}\leq \tilde{f}/2} \left( \tilde{f}(x)  + x^{\alpha-1}\sum_{j=1}^{2k} \frac{|d_{k,j}|}{ z^{j/2}} x^j |\tilde{h}^{(j)}|(x) \right) dx.
  \end{eqnarray*}
  Since
  \begin{gather}\label{FEzc2}
    \tilde{F}(\{ \tilde f_{\beta,\alpha}< \tilde{f}/2\}) \leq \tilde{F}(\mathcal A_1(a)^c ) \notag \\
  \leq \sum_{j=1}^{2k} z^{-(2\beta+e)/2} (\log z)^{a(2\beta+e)/j} \int \frac{ (x^{\alpha-1}x^j|\tilde{h}^{(j)}(x)|)^{(2\beta+e)/j} }{ [\tilde{f}(x)]^{(2\beta+e)/j}}
   \tilde{f}(x) dx \notag\\
   = O(z^{-\beta - e/4})
    \end{gather}
  and since for all $j=1, \cdots, 2k$ 
  \begin{eqnarray}\label{FEzcbis2}
  \int_{\mathcal A_1(a)^c} x^{\alpha-1} x^j|\tilde{h}^{(j)}(x)| dx  &\leq& \tilde{F}(\mathcal A_1(a)^c)^{(\beta - j)/\beta} \left[\int \frac{ [x^{\alpha-1} x^j|\tilde{h}^{(j)}(x)|]^{\beta/j} }{ [\tilde{f}(x)]^{\beta/j}}\tilde{f}(x) dx\right]^{j/\beta}\notag\\
 &=& O( \tilde{F}(\mathcal A_1(a)^c)^{(\beta - j)/\beta}) = O( z^{-(\beta - j)(1 + e/(4\beta))} ),
   \end{eqnarray}
which implies \eqref{eq:Cbeta}. We now bound the second term of the right hand side of \eqref{decomp:Hell}:
  \begin{equation*}
  \begin{split}
& \int  \left(\sqrt{ K_z  (\tilde f_{\beta,\alpha} \1_{ \tilde f_{\beta,\alpha} \geq \tilde{f}/2}) + 0.5 K_z (\tilde{f} \1_{ \tilde f_{\beta,\alpha}< \tilde{f}/2})}-\sqrt{ f(x)}\right)^2 dx \\
& \leq  \int \left(\sqrt{  K_z  \tilde f_{\beta,\alpha}(x)}-\sqrt{ f(x)}\right)^2 dx   +
 0.5\int  K_z (\tilde{f} \1_{\mathcal A_1(a)^c})(x)  dx
 \end{split}
  \end{equation*}
since $\{  \tilde f_{\beta,\alpha} < \tilde{f}/2\} \subseteq \mathcal A_1(a)^c$. Using \eqref{pointwise:approx}, we have
\begin{eqnarray*}
 \int \left(\sqrt{  K_z \tilde f_{\beta,\alpha} (x)}-\sqrt{ f(x)}\right)^2 dx &=& \int \frac{ ( K_z \tilde f_{\beta,\alpha} (x)- f(x))^2}{\left(\sqrt{  K_z \tilde f_{\beta,\alpha} (x)}+\sqrt{ f(x)}\right)^2} dx\\
 \leq z^{-\beta}\int_{  \tilde f_{\beta,\alpha} \geq \tilde{f}/2}  \frac{ (x^{\alpha-1}R_0(x))^2}{   f(x)} dx
&=& O(z^{-\beta}).
\end{eqnarray*}


Now we consider the integral $ \int K_z (\tilde{f}\1_{\mathcal A_1(a)^c})(x) dx$.
Note that
\begin{equation*}
\begin{split}
\int K_z (\tilde{f}\1_{\mathcal A_1(a)^c})(x) dx &= \int \int \1_{\mathcal A_1^c(a)}(\epsilon) g_{z,\epsilon}(x) \tilde{f}(\epsilon) d\epsilon dx \\
&\leq  \int \int \1_{\mathcal A_1^c(a)}(\epsilon) \1_{\mathcal A_1(a/2)} (x) g_{z,\epsilon}(x) \tilde{f}(\epsilon) d\epsilon dx + \tilde{F}(\mathcal A_1(a)^c ) \\
&\leq \int \int \1_{\mathcal A_1^c(a/2)}(x) \1_{\mathcal A_1(a)} (\epsilon) g_{z,\epsilon}(x) \tilde{f}(\epsilon) d\epsilon dx+ O(z^{-\beta-e/2})
\end{split}
\end{equation*}
Using \eqref{gaussian:approx}, we have that
\begin{equation*}
\begin{split}
& \int \int \1_{\mathcal A_1^c(a/2)}(x) \1_{\mathcal A_1(a)} (\epsilon) g_{z,\epsilon}(x) \tilde{f}(\epsilon) d\epsilon dx \\
& =\int \int \1_{| \epsilon/ x -1| \leq M \sqrt{\log z}/\sqrt{z}}\1_{\mathcal A_1^c(a/2)}(x) \1_{\mathcal A_1(a)} (\epsilon) g_{z,\epsilon}(x) \tilde{f}(\epsilon) d\epsilon dx + O(z^{-H}),
\end{split}
 \end{equation*}
for any $H>0$ by choosing $M$ large enough since
\begin{equation*}
\begin{split}
& \int \int \1_{| \epsilon/ x -1| > M \sqrt{\log z}/\sqrt{z}}\1_{\mathcal A_1^c(a/2)}(x) \1_{\mathcal A_1(a)} (\epsilon) g_{z,\epsilon}(x) \tilde{f}(\epsilon) d\epsilon dx\\
& \leq \int  \1_{|v| > M \sqrt{\log z}/\sqrt{z}} (v+1)^{-1} \phi_{1/\sqrt{z}}(v)(1+O(1/z)) \int (v+1)\tilde{f}((v+1) x) dx dv \\
& \leq  (1+M \sqrt{\log z}/\sqrt{z})^{-1}\int \1_{|v| > M \sqrt{\log z}/\sqrt{z}} \phi_{1/\sqrt{z}}(v)(1+O(1/z))dv\\
&= 2(1+M \sqrt{\log z}/\sqrt{z})^{-1} (1+O(1/z))[1-\Phi(M \sqrt{\log z}/\sqrt{z})] = O(z^{-H}),
\end{split}
 \end{equation*}
for the appropriate choice of $M$.  We need only to study what happens if $ x\in \mathcal A_1^c(a/2)$, $\epsilon \in \mathcal A_1(a)$ and $|\epsilon/x-1| \leq M \sqrt{\log z /z}$. We assume that $z$ is large enough so that $M \sqrt{\log z /z} \leq 1/2$ and hence
$$
x \leq \frac{\epsilon}{1 - M \sqrt{\log z /z}} \leq 2\epsilon.
$$
For  $\epsilon \in \mathcal A_1(a)$ and $|\epsilon/x-1| \leq M \sqrt{\log z}/\sqrt{z}$, we have
\begin{eqnarray*}
|x^j \tilde{h}^{j}(x)| &=& x^j |  \tilde{h}^{(j)}(\epsilon) + \sum_{l=1}^{r-j} \epsilon^{\ell}  \tilde{h}^{(j+\ell)}(\epsilon) \frac{(x/\epsilon-1)^{\ell} }{ \ell! }+ O((x/\epsilon-1)^{\beta-j}  L_{\tilde{h}^{j}}(\epsilon)  )|\\
&\leq & 2^j \left[ | \epsilon^j \tilde{h}^{(j)}(\epsilon)| + \sum_{l=1}^{r-j} |\epsilon^{\ell+j}  \tilde{h}^{(j+\ell)}(\epsilon)| \frac{M^\ell (\log z)^{\ell/2}}{ \ell!  z^{\ell/2} }\right]  \\
 && \quad +   O([z^{-1}\log z]^{(\beta-j)/2}) 2^j | L_{\tilde{h}^{j}}(\epsilon)|\\
&\leq & 2^j \tilde{h}(\epsilon) \delta z^{j/2} (\log z)^{-a} \left[1 +  \sum_{l=1}^{r-j} \frac{M^\ell (\log z)^{\ell/2} }{ \ell! }+ O([ \log z]^{(\beta-j)/2})\right]\\
&\leq & C_j \tilde{h}(\epsilon) \delta z^{j/2}[ (\log z)^{-a} + (\log z)^{-a + (\beta-j)/2}].
\end{eqnarray*}
We bound $\tilde{h}(x)$ from below using $\epsilon \in \mathcal A_1(a)$ and and $|\epsilon/x-1| \leq M \sqrt{\log z}/\sqrt{z}$:
\begin{equation*}
\begin{split}
\tilde{h}(x) &\geq \tilde{h}(\epsilon) - \sum_{j=1}^r \frac{(M\sqrt{\log z/z})^j }{ j! } x^j\tilde{h}^{(j)}(\epsilon) - 2 (M\sqrt{\log z/z})^\beta x^\beta L_{\tilde{h}}(\epsilon) \\
&\geq  \tilde{h}(\epsilon) \left( 1 - \sum_{j=1}^r \frac{(M\sqrt{\log z/z})^j }{ j! } \frac{ (2\epsilon)^j \tilde{h}^{(j)}(\epsilon)}{ \tilde{h}(\epsilon) } - 2 (M\sqrt{\log z/z})^\beta\frac{ (2\epsilon)^\beta L_{\tilde{h}}(\epsilon) }{ \tilde{h}(\epsilon )}\right) \\
&\geq  \tilde{h}(\epsilon) \left( 1 - \sum_{j=1}^r \frac{(2M)^j  }{ j! } \delta (\log z)^{-a+j/2} - 2 (2M)^\beta  \delta (\log z)^{-a+\beta/2}\right) \\
&\geq \tilde{h}(\epsilon) ( 1 - O((\log z)^{-a+ \beta/2}))
\end{split}
\end{equation*}
which implies that
\begin{eqnarray*}
\tilde{f}(x) &=& C_z^{\alpha} x^{\alpha-1}\tilde{h}(x) \geq C_z^{\alpha} (2\epsilon)^{\alpha-1}\tilde{h}(\epsilon) ( 1 - O((\log z)^{-a+ \beta/2}))\\
 &=& 2^{\alpha-1}\tilde{f}(\epsilon)( 1 - O((\log z)^{-a+ \beta/2}))
\end{eqnarray*}
and
$$ \frac{ | x^j \tilde{h}^{(j)}(x)| }{ \tilde{h}(x) } \leq \delta z^{j/2} (\log z)^{-a/2}[ (\log z)^{-a/2+ \beta/2} + 1].$$
 In particular, it implies that if $a \geq \beta$ then $x \in \mathcal A_1(a/2)$; so for $x \in \mathcal A_1^c(a/2)$, we must have $a<\beta$.
Therefore, using \eqref{FEzc2} and taking $a \in (\beta/2, \beta)$, we have
\begin{eqnarray*}
&& \int \int \1_{| \epsilon/x-1| \leq M \sqrt{\log z}/\sqrt{z}}\1_{\mathcal A_1^c(a/2)}(x) \1_{\mathcal A_1(a)} (\epsilon) g_{z,\epsilon}(x) \tilde{f}(\epsilon) d\epsilon dx \\
&\leq& 2^{\alpha-1}( 1 - O((\log z)^{-a+ \beta/2}))^{-1}(1+O(1/z))^{-1}
 \int \int  \1_{\mathcal A_1^c(a/2)}(x) \phi_{1/\sqrt{z}}(1 - u) \tilde{f}(x)  du dx\\
&\leq & 2^{\alpha-1}( 1 + O((\log z)^{-a+ \beta/2}))(1+O(1/z))  \tilde{F}(\mathcal A_1^c(a/2))\\
 &\lesssim&  z^{-\beta-e/4} ( 1 + O((\log z)^{-a+ \beta/2}))(1+O(1/z)),
\end{eqnarray*}
and \eqref{Hell:contdens} is proved.

\subsection{Adjustments for an unbounded density}\label{sec:ProofsAdjAlpha}

\begin{lemma}\label{lem:ApproxAlpha}
 For any $f\in \mathcal P_{\alpha}( \beta, L(\cdot), \gamma, C_0, C_1,e,\Delta)$, $x>0$, $A\in(0,\infty]$,
\begin{eqnarray*}
 \int_0^A g_{z,\epsilon}(x) f(\epsilon)d\epsilon =x^{\alpha-1}  \frac{z^{\alpha}\Gamma(z-\alpha)}{\Gamma(z)}\int_0^A  g_{z-\alpha+1,\epsilon}(x/ C_z)  h(\epsilon) d\epsilon,\\
  \int_0^A g_{z,\epsilon}(x) C_z f(C_z \epsilon)d\epsilon =x^{\alpha-1}  \frac{C_z^{\alpha-1}z^{\alpha}\Gamma(z-\alpha)}{\Gamma(z)}\int_0^{C_z A}  g_{z-\alpha+1,\epsilon}(  x)  h(\epsilon) d\epsilon,\\
\end{eqnarray*}
 and
 $$
 \frac{z^{\alpha}\Gamma(z-\alpha)}{\Gamma(z)}=1+O(1/z) \quad  \&  \,\, C_z^{\alpha-1}=1+O(1/z) \quad  \text{  as }  z\to \infty,
 $$
 where  $h(x) = x^{1-\alpha}f(x)$ and $C_z = \frac{z-\alpha+1}{z}$.
\end{lemma}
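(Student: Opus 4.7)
The plan is to establish both identities by direct algebraic manipulation of the Gamma density formula, followed by a change of variables for the second identity, and to derive the asymptotic relation via Stirling's formula.

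For the first identity, I write $f(\epsilon) = \epsilon^{\alpha-1} h(\epsilon)$ and absorb $\epsilon^{\alpha-1}$ into the Gamma density to get
\begin{equation*}
g_{z,\epsilon}(x)\, \epsilon^{\alpha-1} \;=\; \frac{z^z\, x^{z-1}}{\Gamma(z)\, \epsilon^{z-\alpha+1}}\, e^{-zx/\epsilon}.
\end{equation*}
I now match this expression against $x^{\alpha-1}\, g_{\zeta,\epsilon}(y)$ for a suitable pair $(\zeta,y)$. Requiring the exponent of $\epsilon$ to match forces $\zeta = z-\alpha+1$; requiring the exponential rate $\zeta y/\epsilon$ to equal $zx/\epsilon$ forces $y = x/C_z$ with $C_z = \zeta/z$; and the $x^{\alpha-1}$ prefactor absorbs the remaining power of $x$ coming from $(x/C_z)^{\zeta-1} = x^{z-\alpha} C_z^{-(z-\alpha)}$. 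The leftover numerical factor, obtained by comparing $z^z/\Gamma(z)$ with $\zeta^\zeta/\Gamma(\zeta)$ and collecting the $C_z^{z-\alpha}$ term, reduces through the identity $\Gamma(\zeta) = (z-\alpha)\Gamma(z-\alpha)$ to the constant $\frac{z^\alpha \Gamma(z-\alpha)}{\Gamma(z)}$ (up to a factor $1 + O(1/z)$). Since this factor is independent of $\epsilon$, it pulls out of the integral, yielding the claimed identity.

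The second identity follows by applying the first to $\tilde f(\epsilon) := C_z f(C_z \epsilon) = C_z^{\alpha}\, \epsilon^{\alpha-1}\, h(C_z \epsilon)$, whose associated ``$h$''-function is $\tilde h(\epsilon) = C_z^{\alpha} h(C_z \epsilon)$. The first identity then produces the integral of $g_{z-\alpha+1,\epsilon}(x/C_z)\, h(C_z \epsilon)$ on $[0,A]$ times a prefactor. I next change variables $u = C_z\epsilon$, which moves the upper limit to $C_z A$ and contributes a Jacobian $1/C_z$. Finally, I invoke the scaling identity $g_{z,\epsilon/c}(x/c) = c\, g_{z,\epsilon}(x)$ (obtained by substituting into the density formula and canceling the $c^{\pm z}$ terms) with $c = 1/C_z$ to replace $g_{z-\alpha+1,\,u/C_z}(x/C_z)$ by $C_z\, g_{z-\alpha+1,u}(x)$. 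Combining the various powers of $C_z$ produces the stated constant.

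The asymptotic estimate $\frac{z^\alpha \Gamma(z-\alpha)}{\Gamma(z)} = 1 + O(1/z)$ is an immediate consequence of Stirling's formula, which gives $\log\Gamma(z) - \log\Gamma(z-\alpha) = \alpha \log z + O(1/z)$ as $z\to\infty$. Similarly, $C_z^{\alpha-1} = \bigl(1+(1-\alpha)/z\bigr)^{\alpha-1} = 1 + O(1/z)$ by Taylor expansion. The proof is therefore essentially computational; the main obstacle is the careful bookkeeping of the various Gamma functions and of the powers of $z$, $C_z$, and $x$, ensuring that the shape-parameter shift $z \mapsto z-\alpha+1$, the rescaling $x \mapsto x/C_z$, the boundary-correction prefactor $x^{\alpha-1}$, and the multiplicative constants are correctly aligned throughout.
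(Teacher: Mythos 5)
Your proof is correct and follows essentially the same route as the paper: for the first identity you match $g_{z,\epsilon}(x)\,\epsilon^{\alpha-1}$ against $x^{\alpha-1}g_{z-\alpha+1,\epsilon}(x/C_z)$ by comparing the $\epsilon$-power, the exponential rate, and the normalizing constants, exactly as the paper does. For the second identity the paper computes directly via the substitution $v=C_z\epsilon$, whereas you obtain it by applying the first identity to the rescaled density $\tilde f(\epsilon)=C_z f(C_z\epsilon)$ and then invoking the Gamma-density scaling relation $g_{\zeta,\epsilon/c}(x/c)=c\,g_{\zeta,\epsilon}(x)$; this is a small but genuine streamlining that avoids redoing the prefactor bookkeeping. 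You also correctly flag that the displayed constant $z^\alpha\Gamma(z-\alpha)/\Gamma(z)$ agrees with the exact prefactor $z^\alpha\Gamma(z-\alpha+1)/\bigl((z-\alpha+1)\Gamma(z)\bigr)$ only up to a $1+O(1/z)$ factor, which is the same observation implicit in the paper's own derivation and is all the lemma's later use requires.
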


\begin{proof}[Proof of Lemma~\ref{lem:ApproxAlpha}]

Let $f\in \mathcal P_{\alpha}( \beta, L(\cdot), \gamma, C_0, C_1, e, \Delta)$ and denote $h(x) = x^{1-\alpha} f(x)$.

1. For  large enough $z$ and for any $A\in(0,\infty]$,  denoting $x_\alpha = C_z x$, we have
\begin{eqnarray*}
&&\int_0^A g_{z,\epsilon}(x) f(\epsilon)d\epsilon =  \frac{ 1 }{ \Gamma(z) }\int_0^A  x^{z-1}e^{ -zx/\epsilon} \left( \frac{ z}{ \epsilon }\right)^{ z } \epsilon^{\alpha-1} h(\epsilon) d\epsilon \\
&=&  x^{\alpha} \frac{ \Gamma(z -\alpha+1) }{ \Gamma(z) } \left(x_{\alpha}  \right)^{ z -\alpha+1}  \int_0^A  \frac{e^{ -( z -\alpha+1)x_{\alpha}/\epsilon}}{\Gamma(z -\alpha+1)} \left(  \frac{ z -\alpha+1}{ \epsilon }\right)^{ z -\alpha+1}   h(\epsilon) d\epsilon\\
&=&  x^{\alpha-1}  \frac{ z^{\alpha}\Gamma(z -\alpha+1) }{(z-\alpha+1) \Gamma(z) }  \int_0^A  g_{z-\alpha+1,\epsilon}(x_\alpha)  h(\epsilon) d\epsilon\\
&=&  x^{\alpha-1}  (1+O(1/z))\int_0^A  g_{z-\alpha+1,\epsilon}(x_\alpha)  h(\epsilon) d\epsilon
\end{eqnarray*}
since the Stirling formula implies
$$
\frac{ \Gamma(z -\alpha) }{\Gamma(z) }
 =  z^{-\alpha}(1+O(1/z)).
$$

2. For  large enough $z$ and for any $A\in(0,\infty]$,
\begin{eqnarray*}
&&\int_0^A g_{z,\epsilon}(x) C_z f(C_z \epsilon)d\epsilon =  \frac{ x^{z-1} }{ \Gamma(z) }\int_0^A  e^{ -zx/\epsilon} \left( \frac{ z}{ \epsilon }\right)^{ z } (C_z \epsilon)^{\alpha-1} h(C_z \epsilon) C_z d\epsilon\\
 &=&[v=C_z \epsilon]
=  \frac{ x^{z-1} C_z^z}{ \Gamma(z) }\int_0^{C_z A}  e^{ -zx C_z/v} \left( \frac{ z}{ v }\right)^{ z } v^{\alpha-1} h(v)  dv \\
&=&  x^{\alpha-1}\frac{  z^{\alpha-1} C_z^{\alpha-1} \Gamma(z+1-\alpha)}{ \Gamma(z) }  \frac{x^{z-\alpha}}{\Gamma(z+1-\alpha)} \\
&& \quad \times \int_0^{C_z A}  e^{ -  (z+1-\alpha)x /  v} \left( \frac{ z+1-\alpha}{ v }\right)^{ z +1-\alpha}   h(v)  dv \\
&=&  x^{\alpha-1}\frac{  z^{\alpha-1} C_z^{\alpha-1} \Gamma(z+1-\alpha)}{ \Gamma(z) }    \int_0^{C_z A}  g_{ z +1-\alpha,\epsilon}(x)   h(\epsilon)  d\epsilon.
\end{eqnarray*}
Since  $C_z= 1-\frac{ 1-\alpha }{z -\alpha+1}=1+O(z^{-1})$, we also have
 $C_z^{\alpha-1} = (1+(1-\alpha)/z)^{\alpha-1} = 1+O(1/z)$ for large $z$.

Therefore, the lemma is proved.
\end{proof}



\section{Approximation of densities by finite mixtures} \label{app:finite:approx}

\subsection{Construction of the discrete approximation}

The construction of a discrete finite mixture and the lower bound on the prior mass of Kullback-Leibler neighbourhoods of a smooth density $f$ are  similar to \citet{ghosal:vdv:mixture:07} and \citet{rousseau:09}. We first present the construction of the discrete distribution in Lemma \ref{lem:discrete1}, then we control the Hellinger distance between $f$ and the discrete approximation in Lemma \ref{lem:KL:PN}.
\begin{lemma} \label{lem:discrete1}
Let $e_z = z^{-a}$,  $E_z = z^b$ and $H$ be a probability distribution on $[e_z, E_z]$. Then for all $\kappa>0$,  there exists $N_0>0$ and a probability distribution $P$ with at most $\bar N = N_0 \sqrt{z} (\log z)^{3/2}$ supporting points such that : for all $x\in [\tau_0 e_z, \tau_1E_z]$ with $0<\tau_0<1 < \tau_1 <+\infty$
 \begin{equation}\label{discrete:1}
\left|  K_z*(H-P) (x) \right| \leq z^{-\kappa}, \quad \mbox{ when } z \mbox{ is large enough}
 \end{equation}
\end{lemma}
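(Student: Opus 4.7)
The strategy is the classical discrete approximation of a continuous mixing distribution by moment matching on a fine partition, as in \citet{ghosal:vdv:mixture:07} and \citet{rousseau:09}. The key observation is that in log-scale $u=\log\epsilon$, the map $u\mapsto g_{z,e^u}(x)$ is approximately Gaussian of width $1/\sqrt z$ centred at $u=\log x$: indeed $\log g_{z,e^u}(x)=\mathrm{const}(z,x)-zu-zxe^{-u}$ is strictly concave, attains its unique maximum at $u=\log x$, and has curvature $z$ there. This suggests partitioning $[\log e_z,\log E_z]=[-a\log z,b\log z]$ into $K$ subintervals $J_k$ of equal log-length $\delta$ to be chosen, with $I_k=\exp(J_k)$ and $c_k$ the centre of $J_k$.

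On each $I_k$ with $H(I_k)>0$ I would apply Tchakaloff's theorem to obtain a probability $P_k$ supported on at most $N$ atoms of $I_k$, of total mass $H(I_k)$, whose shifted moments $\int(\epsilon-c_k)^j\,d(\cdot)$ for $j=0,\dots,N-1$ agree with those of $H|_{I_k}$. Setting $P=\sum_k P_k$ produces a probability with at most $KN$ atoms. Taylor-expanding $\epsilon\mapsto g_{z,\epsilon}(x)$ at $c_k$ to order $N-1$, the moment matching annihilates the polynomial part and leaves
\begin{equation*}
|K_z\!\ast\!(H-P)(x)| \;\le\; \sum_k \frac{|I_k|^N}{N!}\,\sup_{\epsilon\in I_k}\bigl|\partial_\epsilon^N g_{z,\epsilon}(x)\bigr|\cdot 2H(I_k).
\end{equation*}

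The decisive ingredient is a Gaussian-type derivative bound
\[|\partial_u^N g_{z,e^u}(x)|\;\lesssim\; C^N z^{N/2}\sqrt{N!}\;\sup_{|u'-u|\le N/\sqrt z}g_{z,e^{u'}}(x),\]
which I would establish either by a Hermite-polynomial expansion after the rescaling $v=\sqrt z(u-\log x)$, or by Cauchy estimates on a complex strip of half-width $\asymp 1/\sqrt z$ about the real $u$-axis. Translating from $u$ back to $\epsilon$ via $|I_k|\asymp e^{c_k}\delta$, the per-interval Taylor remainder is controlled by $(C_1\delta\sqrt z)^N/\sqrt{N!}\cdot H(I_k)\sup_{I_k} g_{z,\epsilon}(x)$.

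The calibration is then straightforward: take $\delta=c_0\sqrt{\log z}/\sqrt z$ (so that $K\asymp\sqrt{z\log z}$) and $N=\lceil c_1\log z\rceil$. Summation gives $\sum_k H(I_k)\sup_{I_k}g_{z,\epsilon}(x)\le C\|K_z\!\ast\!H\|_\infty$, which is polynomial in $z$ for $x\in[\tau_0 e_z,\tau_1 E_z]$, and Stirling applied to $(C_1c_0\sqrt{\log z})^N/\sqrt{N!}$ produces a bound of order $z^{c_1\log(C_2c_0)}$. Choosing $c_0$ small enough (given fixed $c_1$) makes this $\le z^{-\kappa-A}$ for any prescribed $A$, which absorbs the polynomial blow-up and yields the claimed error $z^{-\kappa}$. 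The total number of atoms is $KN\lesssim(c_1/c_0)\sqrt z(\log z)^{3/2}\le N_0\sqrt z(\log z)^{3/2}$ as required. The main obstacle is the uniform Hermite-type $N$-th derivative estimate; a secondary technical point is a separate, coarser treatment of the two boundary intervals of $[e_z,E_z]$, where the local Gaussian expansion degrades because $|\epsilon/x-1|$ need not be small.
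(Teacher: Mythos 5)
Your overall scheme has the same skeleton as the paper's: a geometric partition of $[e_z,E_z]$ with log-mesh $\delta\asymp\sqrt{\log z/z}$ (hence $\asymp\sqrt{z\log z}$ cells), a local quadrature measure on each cell with $\asymp\log z$ atoms that matches moments of that degree (the paper invokes Lemma~A1 of \citet{ghosal:vdv:01}, which is exactly the one-dimensional Tchakaloff-type existence you cite), and control of the residual once a polynomial of degree $\lesssim\log z$ in $\epsilon$ has been annihilated; the atom count $\sqrt z(\log z)^{3/2}$ falls out the same way. However, the estimate you flag as the ``main obstacle'' is a genuine gap, and one of the two routes you propose for it cannot work. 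You need
\begin{equation*}
\bigl|\partial_u^N g_{z,e^u}(x)\bigr|\lesssim C^N z^{N/2}\sqrt{N!}\,\sup_{|u'-u|\le N/\sqrt z}g_{z,e^{u'}}(x),
\end{equation*}
with $\sqrt{N!}$ rather than $N!$. Cauchy on a disc of radius $r$ gives only $N!\,r^{-N}\sup_{|w-u|\le r}|g_{z,e^{w}}(x)|$, and because $|e^{-z\eta(u+i\theta)}|\approx e^{-z\eta(u)}e^{z\theta^2 e^{-u}/2}$ with $\eta(v)=e^{-v}-1+v$, the complex supremum is only comparable to the real one for $r\lesssim 1/\sqrt z$; so Cauchy yields $N!\,z^{N/2}$. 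Inserting $N!\,z^{N/2}$ into the remainder $(\delta^N/N!)\sup|\partial^N|$ with $\delta\sqrt z=c_0\sqrt{\log z}$ produces $(c_0\sqrt{\log z})^N$, which is $\geq1$ for large $z$: the calibration collapses. Shrinking to $\delta\asymp1/\sqrt z$ restores geometric decay but inflates the atom count to $\sqrt z(\log z)^2$, a factor $\sqrt{\log z}$ above the statement (and above what is fed into Lemmas~\ref{lem:KL:PN} and~\ref{lem:DiscrApproxBoundary} to obtain the log-power $q$). So the Hermite-type $\sqrt{N!}$ improvement is essential to your route and must be established for the genuinely non-Gaussian profile $e^{-z(e^{-v}-1+v)}$, which is the part you have not done.

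The paper's proof sidesteps the derivative estimate entirely. Writing $g_{z,\epsilon}(x)\propto x^{-1}e^{-zh(u)}$ with $u=\epsilon/x$ and $h(u)=\log u+1/u-1$, it truncates the exponential series $e^{-zh}=\sum_{j\le N}(-zh)^j/j!+R_N$ and exploits the elementary bound $0\le h(u)\le(u-1)^2$: on the window $|u-1|\le M\sqrt{\log z/z}$ one has $zh(u)\le M^2\log z$, so $|R_N|\le(M^2\log z)^{N+1}/(N+1)!\le e^{-\tau(N+1)}$ as soon as $N\gtrsim\log z$, by Stirling. A finite Taylor expansion of $h$ about $u=1$ then turns the truncated sum into a polynomial of degree $\le rN$ in $\epsilon$, and the matched moments kill it. The only quantitative input is the factorial of the exponential series; no high-order derivative bound is needed. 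One further remark: the delicate case is not the two boundary cells of $[e_z,E_z]$, but, for each fixed $x$, all cells with $|\epsilon/x-1|$ not small, which is most of them; the paper treats these directly by the two tail regimes $2\delta_z<|\epsilon/x-1|<\delta$ and $|\epsilon/x-1|>\delta$ via~\eqref{lb:gz}--\eqref{ub:gz}, rather than by the Taylor/Hermite machinery.
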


\begin{proof}[Proof of Lemma \ref{lem:discrete1}]

The proof of the lemma is based on the ideas of \citet{ghosal:vdv:01} combined with some ideas of  \citet{rousseau:09}. We use Gaussian approximation \eqref{gaussian:approx}. Set $u =\epsilon/x$ and consider $|u-1 | \leq M\sqrt{\log z/z}:= \delta_z$ with $M$ some arbitrarily large constant.
Then writing $h(u)  =   \log u +1/u -1$
\begin{equation*}
\begin{split}
\frac{ z^z e^{- z/u}}{ \Gamma(z)u^{z}} &= \frac{\sqrt{z} \exp\left( - z [ \log u +1/u -1] \right) }{ \sqrt{2\pi}} \left( 1 + R(z) \right)^{-1} \\
&= \frac{ \sqrt{z}}{ \sqrt{2\pi} ( 1 + R(z) )} \left( 1 +\sum_{j=1}^N \frac{ z^j h(u)^j }{ j! } + R_N(u) \right)
\end{split}
\end{equation*}
where $| R_N(u)|  \leq \frac{ z^{N+1} h(u)^{N+1} }{ (N+1)! }  $.
Note that $0\leq h(u) \leq ( u-1)^2 $ when $|u-1 | \leq \delta_z$ and $z$ is large enough. Hence for all, $u$ such that $|u-1 | \leq \delta_z$,
\begin{equation*}
\begin{split}
| R_N(u)| & \leq \frac{ ( M \log z)^{(N+1)} }{ (N+1)! }  \leq \frac{ e^{- ( N+1)  \log \left( \frac{N+1}{ M e \log z } \right)}}{\sqrt{N+1} } \leq e^{ - \tau ( N+1) } ,
\end{split}
\end{equation*}
as soon as $N+1 > \tilde{N}_0 \log z $ with $\tilde{N}_0$ large enough, for some $\tau>0$. Choose $r\geq 2$, then a Taylor expansion of $h(u) $ around $1$ leads to
\begin{equation*}
h(u) =  \frac{( u-1)^2}{2} +  \sum_{j=3}^{r} h_j ( u-1)^j + R_h(u), \quad |R_h(u) | \leq \frac{ C_h |u-1|^{r+1}}{ (r+1)! }
\end{equation*}
with $h_j=h^{(j)}(1)$.
This implies that
\begin{equation*}
g_{z,\epsilon}(x) =\frac{ Q_{N,z}(\epsilon, x) }{ x } +\Delta_N(x) ,
\end{equation*}
where $Q_{N,z}(\epsilon, x ) $ is a polynomial function of $\epsilon$   with degrees less than or equal to $r N$ and
$$\Delta_N(\epsilon, x) =  \frac{ R_N( \epsilon/x )}{x } + O\left(  \frac{ N\sqrt{z} [| zR_h(\epsilon/x)|  + |z^NR_h(\epsilon/x)|^N /N! ]}{x}\right).$$
For all $|\epsilon /x -1|\leq \delta_z$,
$$\Delta_N(\epsilon, x) \lesssim \frac{e^{ - \tau ( N+1) }}{x}   + \frac{z^{-r/2+1}(\log z)^{(r+1)/2} }{ x }. $$
 If $|\epsilon/x -1 | \in ( \delta_z , \delta)$ with $\delta >0$ arbitrarily small but fixed, $h(\epsilon/x)\geq (\epsilon/x-1)^2 /4$ and
 \begin{equation*}
 g_{z,\epsilon}(x) \leq  \frac{2\sqrt{z} \exp\left( - \frac{z (\epsilon-x)^2}{ 4 x^2 }  \right) }{ x\sqrt{2\pi}} .
 \end{equation*}
  Split $[e_z, E_z]$ into intervals in the form
 $ I_j = [e_z(1+ \delta_z/2)^j , e_z (1+ \delta_z/2)^{j+1}]$, with $j \leq J_z$ and
 $$J_z  = \lceil  \log ( E_z /e_z) / \log (1+ \delta_z/2)\rceil \lesssim \frac{(b+a)}{ \delta_z } \log z \lesssim \sqrt{z}( \log z)^{1/2} $$
 Following Lemma A1  of  \citet{ghosal:vdv:01}, since the functions $\epsilon \rightarrow \epsilon^{\ell}$, $\ell \leq rN$ are continuous over $I_j$, there exists a probability $P_{j,N}$ with  support included in $I_j$ with at most $rN+1$ points in the support such that for all $\ell\leq rN$
\begin{equation}\label{eq:MatchPH}
\int_{I_j} \epsilon^{\ell} d H_j(\epsilon) = \int_{I_j} \epsilon^{\ell} dP_{j,N}(\epsilon),
 \end{equation}
 where $H_j  = H\1_{I_j} / H[I_j]$. Construct
  $ P_N = \sum_j H(I_j) P_{j,N}$, then $P_N$ has support $[e_z, E_z]$ and for all $x$,
  \begin{equation*}
|  K_z*H(x)  - K_z*P_N(x) | \leq  \sum_{j=1}^{J_z} H(I_j) \sup_{\epsilon\in I_j} |\Delta_{N}(\epsilon, x)| .
  \end{equation*}
Let $\epsilon \in I_j$ and $x\in [e_z(1+ \delta_z/2)^{j-1}, e_z(1+ \delta_z/2)^{j+2}]$, then $x/\epsilon \leq (1+ \delta_z/2)^2 \lesssim 1+ 2 \delta_z$ when $z$ is large enough and
$$|\Delta_{N}(\epsilon, x)| \leq  \frac{e^{ - \tau ( N+1) }}{x}   + \frac{z^{-r/2+1}(\log z)^{(r+1)/2} }{ x } \leq z^{-\kappa-1}$$
as soon as $r/2 > a + \kappa +1$. If $|\epsilon/x -1|\in ( 2\delta_z, \delta) $
\begin{equation*}
\begin{split}
|  K_z*{H_j}(x)  - K_z*P_{j,N}(x) | &\leq   K_z*{H_j}(x)  + K_z*P_{j,N}(x) \\
&\leq  \frac{4\sqrt{z} \exp\left( - \frac{M^2 \log z }{ 4  }  \right) }{ x\sqrt{2\pi}} \lesssim z^{-(M^2-2)/4+a}  \leq z^{-\kappa}
\end{split}
\end{equation*}
as soon as $M^2 > 4(\kappa+a)+2$. Finally if $|\epsilon/x-1| > \delta$, using Lemma \ref{lem:tech1}
\begin{equation*}
|  K_z*{H_j}(x)  - K_z*P_{j,N}(x) | \leq   K_z*{H_j}(x)  + K_z*P_{j,N}(x) \lesssim e^{-cz }
\end{equation*}
for some $c>0$.  This implies that for all $x \in \R$,
  \begin{equation*}
|  K_z*H(x)  - K_z*P_N(x) | \leq  z^{-\kappa}.
  \end{equation*}
where $P_N$ has at most $N_0\sqrt{z}(\log z)^{3/2} $ supporting points in $[e_z,E_z]$, with $N_0$ depending on $\kappa, a, b$.
\end{proof}

The following Lemma allows us to control the Kullback-Leibler divergence between $f$ and $K_z*P$.
\begin{lemma} \label{lem:KL:PN}
Assume that $f\in \mathcal P_{\alpha}(\beta, L, \gamma, C_0, C_1, e , \Delta)$,  and that there exist $C_2>0$ and  $\rho_1>0$  such that
 $$
 \int_{x}^\infty y^2 f(y) dy  \leq C_2 (1+x)^{-\rho_1}.$$
 Let $e_z = z^{-a}$ and $E_z = z^b$ with $a >  (\alpha \wedge 1)^{-1} (\beta \vee (2 \beta - 1))$ and $b > (\beta \vee (2\beta -1))/(\rho_1+2) $. Then there exists
 $$P_N  = \sum_{i=1}^N p_i \delta_{(u_i)}, \quad u_i \in [e_z, E_z],\quad  N \leq N_0 \sqrt{z} (\log z)^{3/2} $$
 such that
 $$  D_H^2(K_z*P_N, K_z*\bar f_\beta) \lesssim z^{-\beta} , \quad D_H^2(K_z*P_N, f)\lesssim z^{-\beta}.$$
 Moreover,  there exists $A>0$ such that we can choose  $u_1\leq \cdots \leq u_N$, $u_i - u_{i-1} > z^{-A} $ and $p_i > 3 z^{-A}$ for all $i \leq N$ as long as $z$ is large enough.
\end{lemma}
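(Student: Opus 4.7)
The plan is to combine Proposition~\ref{lem:approx:1} (continuous mixture $K_z\bar f_\beta$ is within Hellinger distance $z^{-\beta/2}$ of $f$) with Lemma~\ref{lem:discrete1} (any probability on $[e_z,E_z]$ admits a discrete surrogate with $O(\sqrt z(\log z)^{3/2})$ atoms and pointwise error $z^{-\kappa}$). Since $D_H^2(g_1,g_2)\le \|g_1-g_2\|_1$, it will suffice to produce a discrete $P_N$ such that $\|K_z P_N - K_z \bar f_\beta\|_1\lesssim z^{-\beta}$, and then $D_H^2(K_z P_N,f)\lesssim z^{-\beta}$ follows from the triangle inequality and Proposition~\ref{lem:approx:1}.

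First I would truncate: set $H = \bar f_\beta \1_{[e_z,E_z]}/\bar f_\beta([e_z,E_z])$, which is a probability on $[e_z,E_z]$, and apply Lemma~\ref{lem:discrete1} to $H$ with a parameter $\kappa$ chosen so that $\kappa > \beta+b$. This yields a discrete $P$ with at most $N_0\sqrt{z}(\log z)^{3/2}$ atoms in $[e_z,E_z]$ such that $|K_z(H-P)(x)|\le z^{-\kappa}$ on $[\tau_0 e_z,\tau_1 E_z]$. The $L_1$ error decomposes as
$$
\|K_z P - K_z\bar f_\beta\|_1 \le \|K_z P - K_z H\|_1 + \|K_z H - K_z \bar f_\beta\|_1 \le \int_{[\tau_0 e_z,\tau_1 E_z]}z^{-\kappa}\,dx + \mathcal T_1 + \mathcal T_2,
$$
where $\mathcal T_1$ bounds the tails of $K_z P,K_z H$ outside $[\tau_0 e_z,\tau_1 E_z]$ and $\mathcal T_2 \lesssim \bar f_\beta([e_z,E_z]^c)$. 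The first term is $\lesssim z^{-\kappa+b}\le z^{-\beta}$. The term $\mathcal T_1$ is bounded using Lemma~\ref{lem:tech1} (gamma tail bounds at the mean): for $\epsilon\in[e_z,E_z]$ and $x\notin[\tau_0 e_z,\tau_1 E_z]$ the probability $\PP_{g_{z,\epsilon}}(\text{tail})\lesssim e^{-cz}$, negligible. For $\mathcal T_2$, the local H\"older/integrability conditions on $f\in\mathcal P_\alpha$ imply $\bar f_\beta(x)\lesssim x^{\alpha-1}$ near $0$, so $\bar f_\beta([0,e_z])\lesssim e_z^{\alpha\wedge 1}= z^{-a(\alpha\wedge 1)}$, while the tail assumption \eqref{tailcond} yields $\bar f_\beta([E_z,\infty))\lesssim E_z^{-\rho_1-2}= z^{-b(\rho_1+2)}$; the chosen lower bounds on $a$ and $b$ make both $\lesssim z^{-\beta}$. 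Combining all three pieces gives the $z^{-\beta}$ bound on $\|K_zP-K_z\bar f_\beta\|_1$, and hence both Hellinger inequalities.

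For the \emph{moreover} statement I would post-process $P$: iteratively merge any pair of consecutive atoms with $u_{i+1}-u_i\le z^{-A}$ into a single atom at their mass-weighted mean with combined weight, and remove atoms of weight $\le 3z^{-A}$, redistributing their mass to nearest surviving atoms. Using the $L_1$-Lipschitz estimate $\|g_{z,u}-g_{z,u'}\|_1\lesssim\sqrt z|u/u'-1|$ from the proof of Lemma~\ref{prop:Entropy}, each merge alters $K_z P$ in $L_1$ by $O(\sqrt z\,z^{-A}/e_z)$ and each deletion by $O(z^{-A})$; since there are at most $N\lesssim\sqrt z(\log z)^{3/2}$ operations, total distortion is $\lesssim N\sqrt z\,z^{a-A}$, which is $o(z^{-\beta})$ for $A$ chosen sufficiently large relative to $a,b,\beta$.

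The main technical obstacle is the tightness required in the $L_1$ bound: we need distance of order $z^{-\beta}$, not the naive $z^{-\beta/2}$ one gets from $D_H\leq\sqrt{\|\cdot\|_1}$ arguments. This forces $\kappa$ to exceed $\beta+b$ in Lemma~\ref{lem:discrete1}, which in turn drives the polynomial degree (hence the number of atoms) up to $O(\sqrt z(\log z)^{3/2})$. Pinning down the precise thresholds for $a$ and $b$ (in particular the exponent $\beta\vee(2\beta-1)$ instead of $\beta$) requires a careful bookkeeping of the factors $x^{\alpha-1}$ and the $(1+x)^{-\rho_1}$ tail when passing from $\bar f_\beta$ to $H$ and back, together with ensuring that the tail contribution to the Hellinger integrand $(K_zP-K_z\bar f_\beta)^2/(\sqrt{K_zP}+\sqrt{K_z\bar f_\beta})^2$ remains controlled despite the $1/\sqrt{K_z\bar f_\beta}$ blow-up near $0$.
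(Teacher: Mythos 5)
Your proposal follows essentially the same route as the paper's proof: truncate $\bar f_\beta$ to $[e_z,E_z]$, invoke Lemma~\ref{lem:discrete1} on the normalized restriction, control the gamma-tail contribution outside $[e_z/2,2E_z]$ via the exponential decay of $g_{z,\epsilon}$ away from its mean (Lemma~\ref{lem:tech1}), and then post-process the discrete measure for atom separation. The structure is sound and the use of $D_H^2\le\|\cdot\|_1$ is fine.

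Two places where the paper is more careful than the proposal. First, the tail-mass step: you assert $\bar f_\beta([0,e_z])\lesssim e_z^{\alpha\wedge1}$, but when $\beta>1$ the function $\bar f_\beta$ contains derivative-correction terms $z^{-j/2}x^{j+\alpha-1}\tilde h^{(j)}(x)$ whose contribution to the $[0,e_z]$ mass is \emph{not} $O(e_z^\alpha)$; the paper controls them by Cauchy--Schwarz against the integrability condition \eqref{integralHolderalpha}, obtaining $e_z^{\alpha/2}/\sqrt z$, and it is precisely this term that forces $a>\alpha^{-1}(2\beta-1)$ rather than merely $a>\alpha^{-1}\beta$. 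You flag this as bookkeeping, but it is the concrete reason the threshold is $\beta\vee(2\beta-1)$ and not $\beta$, so a complete argument cannot skip it. Second, the paper handles the truncation error via the Hellinger-specific inequality $D_H^2(K_z\bar f_\beta,K_z f_\beta)\le 2-2\sqrt{\bar F_\beta[e_z,E_z]}$ rather than through $L_1$; your $L_1$ route is a valid alternative (and conceptually simpler) since $K_z$ is a Markov kernel, so total variation cannot increase.

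For the \emph{moreover} claim, your merge-and-delete procedure is a genuinely different mechanism from the paper's, which rounds every atom to the nearest point on the multiplicative grid $u_\ell=e_z(1+z^{-A})^\ell$ and then collapses coincidences; the rounding error per atom is $\|g_{z,u}-g_{z,\epsilon}\|_1\le\sqrt{2z}\,z^{-A}$ because the grid controls the \emph{ratio} $|u/\epsilon-1|$, not the difference, which is why the paper gets $z^{1/2-A}$ without the extra $z^a$ factor that appears in your additive-spacing accounting. Notably, the paper's proof never actually verifies $p_i>3z^{-A}$ (this is deferred to a remark), whereas your explicit deletion-and-redistribution step does secure it; in that respect your treatment is more self-contained, at the cost of needing $A$ larger by roughly $a$ to absorb the worse per-merge bound. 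Both work.
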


Note that it appears from the proof of Lemma \ref{lem:KL:PN}, that $P_N$ can be chosen so that for all $1 \leq \ell \leq J_z$ where $J_z$ is such that
$e_z( 1 + M z^{-1/2}\sqrt{\log z})^{J_z+1} \geq  E_z $ and $e_z( 1 + M z^{-1/2}\sqrt{\log z})^{J_z} \leq  E_z $, $P_N(U_\ell)\geq  3z^{-A}$.

\begin{proof}[Proof of Lemma \ref{lem:KL:PN}]
Consider $\bar f_\beta$  as defined in Proposition~\ref{lem:approx:1}. First we approximate this function by a  function supported on $[e_z, E_z]$ so that both upper and lower approximation errors are bounded by $O(z^{-\beta})$. Recall that  $\tilde h(x)  = C_z (C_z x)^{1 - \alpha} f_\beta (C_z x) $.
 Since $\int_0^{e_z} f(\epsilon) d\epsilon \lesssim e_z^{\alpha}$ for small $e_z$, then, by definition of $f_\beta$, we have
\begin{equation*}
\begin{split}
\int_0^{e_z} \bar f_\beta(\epsilon  ) d\epsilon & \lesssim e_z^\alpha + \sum_{j=1}^{r} \frac{\int_0^{e_z}\epsilon^{j+\alpha-1} |\tilde{h}^{(j)}(\epsilon)|d\epsilon }{ z^{j/2} }\\
&\lesssim e_z^\alpha +  \sum_{j=1}^{r}  \sqrt{e_z^\alpha} z^{-j/2} \left(  \int_0^{+\infty } \frac{(\epsilon\wedge 1 )^{2j+\alpha-1}|\tilde{h}^{(j)}(\epsilon)|^2 }{ \tilde{h}(\epsilon)} d\epsilon\right)^{1/2}  \\
&\lesssim e_z^\alpha + \frac{ e_z^{\alpha/2}}{ \sqrt{z}} I(r>0).
\end{split}
\end{equation*}
We also have that
\begin{equation*}
\begin{split}
\int_{E_z}^{+\infty} \bar f_\beta(\epsilon) d\epsilon &\lesssim 1 - F(E_z) + \sum_{j=1}^r  \frac{\int_{E_z}^{+\infty}\epsilon^{j+\alpha-1} |h^{(j)}(\epsilon)|d\epsilon }{ z^{j/2} }\\
&\lesssim 1 -F(E_z) + \frac{ \sqrt{1 - F(E_z)} }{ \sqrt{z}}
\end{split}
\end{equation*}

Therefore, for  $e_z = z^{-a}$,  $\bar F_\beta([0, e_z])\leq C z^{-\beta}$ since  $a > \alpha^{-1} ( \beta\vee (2\beta - 1)) $.
For  $E_z = z^b$, inequality $\bar F_\beta([E_z,\infty))\leq C z^{-\beta}$ is satisfied with $b > ( \beta\vee (2\beta - 1))//(\rho_1+2) $.
We thus have that $\bar F_\beta[e_z,E_z] \geq 1 - O(z^{-\beta})$.
Define
$$ f_\beta = \frac{\bar f_\beta \1_{[e_z,E_z]}}{\bar F_\beta[e_z,E_z]}$$
then
\begin{eqnarray*}
K_z \bar f_\beta(x) &=& \bar F_\beta([e_z, E_z])K_z f_\beta(x) + \int_0^{e_z} \bar{  f}_\beta( \epsilon) g_{z,\epsilon}(x)d\epsilon \\
&  & + \int_{E_z}^{+\infty} \bar f_\beta( \epsilon) g_{z,\epsilon}(x)d\epsilon \geq \bar F_\beta([e_z, E_z])K_z f_\beta(x).
\end{eqnarray*}
 It implies that
\begin{eqnarray*}
D_H^2(K_z {\bar f}_\beta, K_z  f_\beta) \leq 2 - 2 \int_0^\infty K_z f_\beta(x)dx \sqrt{\bar F_\beta([e_z, E_z])} \\
\leq 2-2\sqrt{1-O(z^{-\beta})} \lesssim z^{-\beta},
\end{eqnarray*}
so that
\begin{equation}\label{f:tildef2}
D_H^2(f, K_z f_\beta ) \leq [D_H(f, K_z  \bar f_\beta) + D_H(K_z \bar f_\beta, K_z  f_\beta)]^2 \lesssim z^{-\beta}.
\end{equation}
For an arbitrary $\kappa>0$, which we will choose later, consider the discrete distribution $P_N$ constructed in Lemma \ref{lem:discrete1}, which we write as
 $\tilde P_N  = \sum_{j=1}^{J_z} \sum_{i=1}^{N_j} p_{j,i} \delta_{\epsilon_{j,i}}$, $\epsilon_{j,i} \in [e_z(1+\delta_z/2)^j,e_z(1+\delta_z/2)^{j+1}]$, with $N= \sum_{j=1}^{J_z}N_j \leq N_0 \sqrt{z}(\log z)^{3/2}$ and $\delta_z = M z^{-1/2}\sqrt{\log z}$, where  $N_0 = N_0(\kappa)$ such that
 \begin{equation}\label{approx2}
|K_z*\tilde P_N (x) - K_z  f_\beta (x) | \lesssim z^{-\kappa} \quad \forall x \in [e_z/2, 2 E_z].
 \end{equation}
 Note that for $e_z= z^{-a}$ and $E_z = z^b$,  $J_z \leq (b+a)M^{-1} \sqrt{z \log z}$  and $N_j \lesssim \log z$.  This implies that 
 \begin{equation*}
 \begin{split}
  D_H^2(K_z*\tilde P_N, K_z f_\beta) & \leq \frac{ 1}{2 }\|K_z*\tilde P_N - K_z f_\beta\|_1\\
  &  \lesssim z^{-\kappa} (2E_z-e_z/2) + \int_{[e_z/2,2E_z]^c} (K_z*\tilde P_N+K_z f_\beta)(x)dx.
\end{split}
\end{equation*}
For any distribution $P$ with support $[e_z, E_z]$, by Lemma~\ref{lem:tech1} with  $u=\epsilon/x >1+\delta$, $\delta=1$ and $c_{1}= c(\delta)$:
\begin{eqnarray*}
\int_{0}^{ e_z/2} (K_z*P)(x)dx &=& \int_{0}^{ e_z/2} dx\int_{[e_z, E_z]} g_{z,\epsilon}(x) dP(\epsilon)\\
&\leq& \int_{0}^{e_z/2} x^{c(\delta)z-1} dx\int_{[e_z, E_z]} \epsilon^{-c(\delta)z} dP(\epsilon) \\
&\lesssim&    z^{-1} (e_z/2)^{c(\delta)z} e_z^{-c(\delta)z} \int_{[e_z, E_z]}  dP(\epsilon) \lesssim    2^{-c_1 z} z^{-1}.
\end{eqnarray*}
Similarly, applying Lemma~\ref{lem:tech1} with $u=\epsilon/x < 1-\delta$, $\delta=1/2$  and $c_{0.5}= c(\delta)$,
\begin{eqnarray*}
\int_{2E_z}^{\infty} (K_z*P)(x)dx &=& \int_{2E_z}^{\infty} dx\int_{[e_z, E_z]} g_{z,\epsilon}(x) dP(\epsilon)\\
&\leq& \int_{2E_z}^{\infty} x^{-1} e^{-c(\delta)z x/E_z} dx\int_{[e_z, E_z]}  dP(\epsilon) \\
&\lesssim&    E_z^{-1} \frac{e^{-2c(\delta)z }}{z /E_z}  \int_{[e_z, E_z]}  dP(\epsilon) \lesssim   z^{-1} 2^{-2c_{0.5} z}.
\end{eqnarray*}

Hence choosing $\kappa \geq 2 \beta + b$ implies that
 $$  D_H^2(K_z*\tilde P_N, K_z {\bar f}_\beta) \leq z^{-\beta} , \quad D_H^2(K_z*\tilde P_N, f)\lesssim z^{-\beta}.$$

Let $A>0$ and construct the grid $(u_\ell)_\ell$:
 $$ u_\ell = e_z ( 1 + z^{-A})^\ell, \ell = 0, \cdots L , \quad L = \left\lceil \frac{ \log E_z - \log e_z }{ \log ( 1 + z^{-A}) } \right\rceil \lesssim z^A \log z$$
 Let $ P_N=\sum_{j=1}^{J_z} \sum_{i=1}^{N_j} p_{j,i} \delta_{u_{j,i}}$ be the probability on  $\R^+ $ with supporting points $u_{j,i}$ where $u_{j,i}$ is the closest point to $\epsilon_{j,i}$ on the grid $(u_\ell, \ell\leq L)$.
 If there are multiple $u_{i,j}$ then we collapse the probabilities and without loss of generality we can assume that the $u_{i,j}$ are all distinct. Define
\begin{equation}\label{eq:DefU}
U(u_{\ell})  = [(u_{\ell}+u_{\ell-1})/2, (u_{\ell}+u_{\ell+1})/2],
\end{equation}
covering the interval $[e_z, E_z]$, with a suitable adjustment on the boundaries, and hence the corresponding sets $U_{j,i}=U(u_{j,i})$.
By construction $|u_{j,i}/\epsilon_{j,i}-1|\leq z^{-A}$ and  we have
first that
 for $x \in [e_z/2, 2E_z]$,
\begin{eqnarray*}
| K_z*P_N(x) - K_z*\tilde P_N(x)| &\leq&  \sum_{j=1}^{J_z} \sum_{i=1}^{N_j} p_{j,i}   |g_{z,u_{j,i}}(x) - g_{z,\epsilon_{j,i}}(x)| \\
&\leq& \sum_{j=1}^{J_z} \sum_{i=1}^{N_j} p_{j,i} g_{z,\epsilon_{j,i}}(x) \exp(z^{-A+1}(1+x/\epsilon_{j,i}))  \\
&\leq& \sum_{j=1}^{J_z} \sum_{i=1}^{N_j} p_{j,i} g_{z,\epsilon_{j,i}}(x) \exp(z^{-A+1}(1+2E_z/e_z))\\
&\leq& K_z*\tilde P_N(x) [1+ C z^{-A+1+b+a}]
\end{eqnarray*}
for large enough $z$, which implies
 \begin{equation}\label{upboundKP}
K_z* P_N (x) \leq K_z* \tilde P_N(x)( 2 + C z^{a+b+1-A}) , \quad \forall x \in [e_z/2, 2E_z].
\end{equation}
Finally,
\begin{eqnarray*}
D_H^2( K_z*P_N, K_z*\tilde P_N) &\leq& \frac 1 2 || K_z*P_N- K_z*\tilde P_N||_1 \\
& \leq & \frac 1 2 \sum_{j=1}^{J_z} \sum_{i=1}^{N_j} p_{j,i} \int |g_{z,u_{j,i}}(x) - g_{z,\epsilon_{j,i}}(x)| dx\\
&\leq&  \sqrt{1/2} \sum_{j=1}^{J_z} \sum_{i=1}^{N_j} p_{j,i}  z^{-A+1/2}  = z^{-A+1/2}/\sqrt{2},
\end{eqnarray*}
where the last inequality comes from Lemma \ref{L1:geps}. By choosing $A > 1/2 + \beta$, Lemma \ref{lem:KL:PN} is proved by re-indexing
$p_{i,j}$ as $p_l$  and $u_{i,j} $ as $u_l$, $l\leq N$.

\end{proof}

\subsection{ Kullback-Leibler neighbourhoods}  \label{sec:KLNeighbourhoods}
In the following Lemma we describe   Kullback-Leibler neighbourhoods of $f$ of size $\epsilon_n^2$.

\begin{lemma} \label{lem:KL:discrete2}
Assume that $f\in \mathcal P_{\alpha}(\beta, L, \gamma, C_0, C_1, e , \Delta)$,  and that there exists $C-2$ and $\rho_1>0$  such that
 $$
 \int_{x}^\infty y^2 f(y) dy  \leq C_2 (1+x)^{-\rho_1}.$$
Define $P_N = \sum_{i=1}^N p_i \delta_{u_i}$  and $A>0$ as in Lemma \ref{lem:KL:PN} and set
$$
\mathcal P_z = \{ P: \, P(U_{i})/p_i \in( 1 -  2z^{-2A}, 1 - z^{-2A})\, \forall i=1,\ldots,N \}.
$$
Then, if $A$ is large  enough, for all $z$ large enough and  all $P \in \mathcal P_z$,
 $$\KL( f , K_z*P) \leq z^{-\beta} (\log z) ; V( f, K_z *P)\leq z^{-\beta} (\log z)^2.$$
\end{lemma}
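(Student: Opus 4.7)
The plan is to reduce both statements to the Hellinger bound $D_H^2(f, K_z * P) \lesssim z^{-\beta}$ and then apply a standard Kullback--Leibler / Hellinger comparison, treating the tails by direct estimation.

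\emph{Step 1: Hellinger bound.} Lemma \ref{lem:KL:PN} gives $D_H^2(f, K_z * P_N) \lesssim z^{-\beta}$, so by the triangle inequality it suffices to show $D_H(K_z * P, K_z * P_N) \lesssim z^{-\beta/2}$ for every $P \in \mathcal P_z$. I would decompose
\begin{equation*}
 K_z * P(x) - K_z * P_N(x) = \sum_{i=1}^{N} \int_{U_i} [g_{z,\epsilon}(x) - g_{z,u_i}(x)] dP(\epsilon) + \sum_{i=1}^{N} (P(U_i) - p_i) g_{z,u_i}(x) + \int_{U_0} g_{z,\epsilon}(x) dP(\epsilon),
\end{equation*}
and bound each term in $L^1$: the first via Lemma \ref{L1:geps} applied on each $U_i$ (recall $|\epsilon - u_i| \lesssim z^{-A}$ on $U_i$); the second directly from $|P(U_i) - p_i| \leq 2 p_i z^{-2A}$; the third from $P(U_0) \leq 2 z^{-2A}$. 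Choosing $A$ large enough (depending on $\beta$ and the constants defining the functional class) yields $\|K_z * P - K_z * P_N\|_1 \lesssim z^{-\beta}$, hence $D_H^2(f, K_z*P) \lesssim z^{-\beta}$.

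\emph{Step 2: Polynomial lower bound of $K_z*P$ on a bulk region.} Set $\mathcal B_z = [\tau_0 e_z, \tau_1 E_z]$ for suitable $0 < \tau_0 < 1 < \tau_1$. For each $x \in \mathcal B_z$, the $(M \sqrt{\log z/z})$-spaced grid built in Lemma \ref{lem:discrete1} contains some $u_i$ with $|u_i/x - 1| \lesssim z^{-1/2} (\log z)^{1/2}$; the Gaussian approximation of $g_{z,\cdot}(x)$ then gives $g_{z,u_i}(x) \gtrsim z^{1/2}/x$, and the lower bound $P(U_i) \geq p_i(1 - 2z^{-2A}) \gtrsim z^{-A'}$ (from Lemma \ref{lem:KL:PN}) yields $K_z * P(x) \gtrsim z^{-C_1} x^{-1}$ on $\mathcal B_z$. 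Combined with $f(x) \leq C_0 x^{\alpha-1} \lesssim z^{C_2}$ on $\mathcal B_z$ from the functional class, this gives $\sup_{x \in \mathcal B_z} \log(f/K_z*P)(x) \lesssim \log z$.

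\emph{Step 3: Tail control and combination.} The choices $e_z = z^{-a}$ and $E_z = z^b$ with the exponents specified in Lemma \ref{lem:KL:PN} were made precisely so that $\int_{\mathcal B_z^c} f(x) dx \lesssim z^{-\beta}$, via the tail hypothesis \eqref{tailcond} at infinity and the bound $f(x) \lesssim x^{\alpha-1}$ near $0$. Splitting $\int f (\log(f/K_z*P))^k dx$ ($k=1,2$) into $\mathcal B_z$ and $\mathcal B_z^c$, I would bound the bulk piece by the Wong--Shen / \citet{ghosal:vdv:mixture:07} comparison
\begin{equation*}
 \int_{\mathcal B_z} f |\log(f/K_z*P)|^k dx \leq C D_H^2(f, K_z*P) \left( 1 + \log \sup_{\mathcal B_z}(f/K_z*P) \right)^k \lesssim z^{-\beta} (\log z)^k,
\end{equation*}
and the tail piece by Cauchy--Schwarz against the integrability conditions \eqref{integralHolderalpha} in the definition of $\mathcal P_\alpha$, together with a crude polynomial lower bound on $K_z * P$ on $\mathcal B_z^c$ to control $|\log(f/K_z * P)|$ by a power of $\log z$. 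This yields $\KL(f, K_z*P) \lesssim z^{-\beta} \log z$ and $V(f, K_z*P) \lesssim z^{-\beta} (\log z)^2$.

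The main obstacle is the tail, especially near $x=0$ where $f$ may be unbounded like $x^{\alpha-1}$ while the gamma kernel $g_{z,\epsilon}$ with $\epsilon \geq e_z$ is exponentially small there. Matching the exponents $a$, $b$, $A$ and the moment indices in \eqref{integralHolderalpha} so that every tail contribution is dominated by $z^{-\beta}(\log z)^2$ requires careful bookkeeping, but the constraints on $a$ and $b$ imposed in Lemma \ref{lem:KL:PN} are precisely what makes this feasible.
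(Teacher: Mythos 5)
Your plan—Hellinger bound, polynomial lower bound on $K_z*P$ on a bulk region, then a KL/Hellinger comparison—is the same strategy as the paper, and Steps 1 and 2 are essentially the paper's argument (the $L^1$ decomposition $K_z*P - K_z*P_N$ into the three pieces and the Gaussian-approximation lower bound on $K_z*P$ both appear, the latter as the bullet list near the end of the paper's proof). But Step 3 rests on an inequality that is not correct as stated. You write
\[
\int_{\mathcal B_z} f\,|\log(f/K_z*P)|^k\,dx \;\leq\; C\,D_H^2(f,K_z*P)\bigl(1+\log\sup_{\mathcal B_z}(f/K_z*P)\bigr)^k,
\]
attributing it to Wong--Shen / Ghosal--van der Vaart. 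Those comparisons bound the full $\KL$ (or $V$), without absolute value, in terms of $D_H^2$ plus a tail term; they do not give a restricted integral of $f\,|\log(f/q)|^k$ over a subset. And for $k=1$ the absolute value is fatal: the contribution from $\{f<K_z*P\}\cap\mathcal B_z$ is only controlled by
\[
\int_{f<q} f\log(q/f)\;\leq\;\int (q-f)_+ \;=\;\tfrac12\|f-q\|_1\;\lesssim\; D_H(f,q)\;\asymp\;z^{-\beta/2},
\]
which is orders of magnitude larger than the target $z^{-\beta}\log z$; no Hellinger-type argument upgrades this to $D_H^2$. The absolute value is harmless for $k=2$ but still leaves you without a lemma that applies to a restricted domain.

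The tool the paper actually uses, Lemma B2 of Shen--Tokdar--Ghosal (2013), sidesteps all of this: it gives
\[
\KL(f,q)\;\lesssim\;D_H^2(f,q)\,(1+\log(1/\lambda))\;+\;\int_{\{f>q/\lambda\}} f\log(f/q)\,dx,
\]
and the analogous bound for $V$, with the sign issue already handled and the remainder concentrated on the high-likelihood-ratio set $\{f>q/\lambda\}$, not on a complement of a geometric bulk. With $\lambda=z^{-A_1}$, $A_1$ a large enough constant so that $z^{A_1}$ exceeds your polynomial bound on $\sup_{\mathcal B_z}(f/K_z*P)$, the set $\{f>z^{A_1}K_z*P\}$ misses $\mathcal B_z$ entirely and you are left with tail integrals over $[0,e_z)\cup(E_z,\infty)$, which your Step 3 tail estimates are aimed at. (The paper follows a slightly more delicate route, keeping $A_1$ moderate and instead showing that on $\mathcal A_1(0)\cap[e_z/2,2E_z]$ the inequality $f>z^{A_1}K_z*P$ forces $f\lesssim z^{-\kappa}$ for any prescribed $\kappa$; your ``large $A_1$'' variant trades that argument for a larger $\log$ constant.) So the gap is not in the overall architecture but in the specific KL/Hellinger lemma you invoke: replace the restricted-domain, absolute-value comparison by Lemma B2 with $\lambda=z^{-A_1}$, and your Step 3 plan becomes sound.
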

\begin{proof}[Proof of Lemma \ref{lem:KL:discrete2}]
Let $P_N$ be defined as in Lemma \ref{lem:KL:PN}. 
Using Lemma B2 of \citet{ghosal:shen:tokdar} with $\lambda=z^{-A_1}$ and $A_1 >0$ to be defined later, we have that if $P \in \mathcal P_z$,
\begin{equation*}
\begin{split}
\KL( f, K_z*P) &\lesssim D_H^2(f, K_z*P) ( 1 + A_1 \log z)\\
& \quad  + \int_{f> z^{A_1}K_z*P} f(x) \log  \left( \frac{f(x) }{K_z*P(x)}\right)dx \\
&\lesssim D_H^2(f, K_z*P) ( 1 + A_1 \log z) \\
& \quad -\int_{[e_z,E_z]\cap \{f> z^{A_1}K_z*P\} } \log(K_z*P(x)) f(x) dx  \\
& \quad + \int_{\{f> z^{A_1}K_z*P\} \cap [e_z, E_z]}f(x) ( \log f(x) )_+ dx \\
 & \quad + \int_{[e_z, E_z]^c\cap \{f> z^{A_1}K_z*P\}} f(x)  ( (\log f(x))_+ -\log(K_z*P(x))) dx,
 \end{split}
 \end{equation*}
 and similarly for $V( f, K_z*P)$. The above computations imply that for all $P \in \mathcal P_z$,   if $A\geq \beta$
$$D_H^2(f, K_z*P) \lesssim z^{-\beta}.$$

First, we show that for any $\kappa>0$, $\exists A, A_1$ such that if $ f(x)> z^{A_1}K_z*P(x)$ for  $x\in \mathcal A_1(0) \cap [e_z/2, 2E_z]$, then $f(x)\leq z^{-\kappa}$, where $\mathcal A_1(0)$ is defined in \eqref{def:A1}.
Using Lemma \ref{L1:geps},
 \begin{gather*}
| K_z*P(x) - K_z* P_N(x)| = |\int g_{z,u }(x) d P(u) - \sum_{i=1}^N p_{i} g_{z,u_{i} }(x)| \\
\leq \sum_{i=1}^N |\int_{U_{i}} g_{z,u_{i} }(x) d P(u) -  p_{i} g_{z,u_{i} }(x)| +\sum_{i=1}^N \int_{U_{i}} |g_{z,u }(x)-g_{z,u_{i} }(x)| d P(u)  \\
\leq \sum_{i=1}^N  g_{z,u_{i} }(x)  | P(U_{i}) -  p_{i}| +\sum_{i=1}^N  g_{z,u_{i} }(x) |e^{z^{1-2A}(x/u_{i}+1)}-1|  P(U_{i})  \\
\leq \sum_{i=1}^N  g_{z,u_{i} }(x) p_i  z^{-2A} +\sum_{i=1}^N  p_i g_{z,u_{i} }(x) |e^{z^{1-2A}(x/u_{i}+1)}-1|  (1 - z^{-2A})  \\
\leq  K_z* P_N(x) z^{-2A}   +\sum_{i=1}^N p_i g_{z,u_{i} }(x) |e^{z^{1-2A}(x/u_{i}+1)}-1|.
 \end{gather*}
 Then,
 \begin{equation}\label{eq:eq1}
 \begin{split}
 K_z*P(x) &\geq  K_z* P_N \left(1 -  z^{-2A}   \right)   -  \sum_{i=1}^N p_{i}g_{z,u_{i} }(x) |e^{  z^{-2A+1} (1+x  /u_{i})} - 1|.
 \end{split}
 \end{equation}


By construction $f_\beta \geq c_\beta \tilde{f}/2 \geq \tilde{f}/2 ( 1 + o(1) )$, also on $\mathcal A_1(0)$,
\begin{equation*}
K_z h (x ) \geq h(x) - \sum_{j=1}^r\frac{  | \mu_j h^{(j)}(x) x^j |}{ z^{j/2} } - z^{-\beta/2} |R_z(x) | \geq h(x)/2
\end{equation*}
which implies that
\begin{equation*}
K_z \tilde{f} (x ) = x^{\alpha-1}(1+O(1/z)) K_{z+1-\alpha} h (x)  \geq  (1+O(1/z)) f( x)/2. 
\end{equation*}
Using Lemma \ref{lem:discrete1}, on  $\mathcal A_1(0) \cap [e_z/2, 2E_z]$, for arbitrarily chosen $\kappa >0$,
$$f(x) \leq 2 K_z\tilde{f}(x) \leq 4 K_z f_\beta(x) \lesssim K_z*P_N + z^{-\kappa}. $$

Moreover, for $x\in  [e_z/2, 2E_z]$, using \eqref{eq:eq1},
 \begin{eqnarray*}
 \sum_{i=1}^N p_{i}g_{z,u_{i} }(x) |e^{  z^{-2A+1} (1+x  /u_{i})} - 1|\leq  \sum_{i=1}^N p_{i}g_{z,u_{i} }(x)  z^{-2A+1} (1+4 z^{a+b})
 \end{eqnarray*}
 which gives
\begin{equation}\label{eq:eq2}
  K_z*P(x) \geq  K_z* P_N \left(1 -  z^{-2A}    - 5 z^{-2A+a+b+1} \right)
\end{equation}
and hence  $K_z*P \geq C K_z*P_N$ as soon as $A > (a+b+1)/2$.
Hence on $\mathcal A_1(0) \cap [e_z/2, 2E_z]$, $f(x) > z^{A_1} K_z* P_N(x) $ with $A_1>0$ only if
$
f(x)\lesssim z^{-A_1}f(x)+ z^{-\kappa},
$
i.e. if   $f(x) \lesssim z^{-\kappa}$.

 Now consider $x$ such that  $f(x) \leq z^{A_1} K_z* P_N(x)$ and  $f(x) \geq 2 z^{A_1} K_z*P (x)$, i.e. such that $K_z* P_N(x) \geq 2  K_z*P (x)$. Then, using \eqref{eq:eq1},
\begin{equation*}
  2(1-2 z^{-A})^{-1} \sum_{j} p_{i,j}g_{z,u_{j} }(x) |e^{  z^{-A+1} (x /u_{j}+1)} - 1| \geq   K_z* P_N(x).
\end{equation*}
If $x/u_{j}\leq 2$ then $|e^{  z^{-A+1} (x /u_{j}+1)}-1| \lesssim z^{-A+1}$ while if $x> 2 u_{j}$, \,
$$g_{z,u_{j} }(x) e^{  z^{-A+1} (x /u_{j}+1)}  \leq e^{- c z (x/u_{j}+1)}/x$$ for some $c>0$.
Therefore,
\begin{eqnarray*}
 \sum_{j} p_{j}g_{z,u_{j} }(x) |e^{  z^{-A+1} (x /u_{j}+1)} - 1| &\leq& K_z* P_N z^{-A+1} +   \sum_{j} p_{j}\frac{ e^{- c z (x/u_{j}+1)}}{x} \1_{x > 2 u_{j}}\\
 &\leq& K_z* P_N z^{-A+1} +  e_z^{-1}  e^{- 3c z  }.
\end{eqnarray*}
Using \eqref{eq:eq1}, we have
\begin{eqnarray*}
K_z* P_N &\leq& \sum_{i=1}^N p_{i}g_{z,u_{i} }(x) |e^{  z^{-2A+1} (1+x  /u_{i})} - 1|  (0.5 -  z^{-2A})^{-1}\\
&\leq&  4 K_z* P_N z^{-A+1} +  4 e_z^{-1}  e^{- 3c z  }
\end{eqnarray*}
which in turn implies that
 $$   z^{a}  e^{- 3c z  }   \gtrsim  K_z* P_N \geq z^{-A_1} f(x) $$
 so that  $f(x) \lesssim  z^{a +A_1} e^{- 3c z  }$. In all cases, for all $H \geq \kappa$, by choosing $A$ and $A_1$ such that $A+1-a -A_1 > H  $ and so that \eqref{approx2} holds,  we obtain that on  $\mathcal A_1(0) \cap [e_z/2, 2E_z]$ if $2z^{A_1}K_z*P(x) \leq f(x)$ then $f(x) \leq z^{-H}$. For $x$ such that  $\mathcal A_1(0) \cap [e_z/2, 2E_z]$  and $z^{A_1}K_z*P(x) \leq f(x) \leq 2z^{A_1}K_z*P(x)$, $f(x) \leq z^{-\kappa}$.


Now we bound from below $K_z*P(x)$.
\begin{itemize}
\item  Take $x \in [e_z, E_z]$, and let $\ell$ be such that $x \in [e_z(1 + \delta_z)^\ell, e_z(1+\delta_z)^{\ell+1}] $
 with $e_z(1+\delta_z)^{\ell+1} \leq E_z$ and    $\delta_z=M\sqrt{ z^{-1 } \log z }$,  then
 \begin{equation*}
 \begin{split}
 K_z*P(x) &\geq P([e_z(1+\delta_z)^\ell, e_z(1+\delta_z)^{\ell+1}])  \frac{ \sqrt{z} e^{- z \delta_z^2/2} }{\sqrt{2\pi} }( 1 + o(1) )\\
 &\gtrsim z^{-A+1/2-M^2/2},
 \end{split}
 \end{equation*}
 since $P(U_j) \geq p_j ( 1 - z^{-A})$ for all $j$ and  $  P([e_z(1+\delta_z)^\ell, e_z(1+\delta_z)^{\ell+1}])\geq 3z^{-A}$.

\item If $x < e_z$,
 \begin{eqnarray*}
  K_z*P(x) &\geq& \frac{ z^{-A-2}e^{-zx/e_z(1+ \delta_z)} (zx/e_z(1+ \delta_z))^{z-1} }{ e_z\Gamma(z)} \\
  &\geq& \exp \left( 2 z\log (x/e_z) - c \log z\right),
 \end{eqnarray*}
 when $z$ is large enough, for some $c>0$.
\item  If $x>E_z$,
 \begin{eqnarray*}
  K_z*P(x) &\gtrsim& \frac{ e^{-zx/e_z} (zx/e_z)^{z-1} }{x\Gamma(z)} \geq \exp \left( - z\frac{x}{e_z} + (z-1)[\log (x)- \log (e_z)]\right)\\
   &\gtrsim& e^{-2zx/e_z}.
 \end{eqnarray*}
\end{itemize}

Then, using Lemma B2 of \citet{ghosal:shen:tokdar} with $\lambda=z^{-A_1}$, we have, using $\log f(x) \lesssim \log x$,
\begin{equation*}
\begin{split}
&\KL( f, K_z*P)  \lesssim D_H^2(f, K_z*P) ( 1 + A_1 \log z) \\
& \quad + \int_{f> z^{A_1}K_z*P} f(x) \log  ( f(x) /K_z*P(x))dx \\
&\lesssim D_H^2(f, K_z*P) ( 1 + A_1 \log z) + \log z \int_{[e_z,E_z]\cap \{f> z^{A_1}K_z*P\} } f(x) dx  \\
& \quad + \int_{\{f> z^{A_1}K_z*P\} \cap [e_z, E_z]}f(x) ( \log f( x))_+  dx\\
 & + z\int_{0}^{e_z} f(x)  ( |\log x| + \log z) dx + z^{a+1}\int_{E_z}^\infty f(x) x dx \\
 &\lesssim z^{-\beta} \log z  + z\log z F(0,e_z) +  z^{-b(2+\rho_1)/2} +z^{a+1-b( 1+\rho_1)} \\
& + \log z\int_{\mathcal A_1(0)\cap  [e_z,E_z]\cap \{f> z^{A_1}K_z*P\}}  f(x) dx\\
& \quad  +\log z\int_{\mathcal A_1(0)^c\cap  [e_z,E_z]\cap \{f> z^{A_1}K_z*P\}}  f(x) dx\\
 &\lesssim z^{-\beta} \log z  + z^{1-a\alpha}\log z   +  z^{-b(2+\rho_1)/2} +z^{a+1-b( 1+\rho_1)}   + z^{b-\kappa} \log z
+  z^{-\beta-e/4} \log z
\end{split}
\end{equation*}
using $F(\mathcal A_1(0)^c)\lesssim z^{-\beta-e/4}$ and
\begin{eqnarray*}
 \int_{E_z}^\infty f(x) (\log f(x))_+dx  &\leq& \sqrt{F[E_z,+\infty )}\sqrt{\int_{1}^\infty f(x) \log^2(x)dx } \lesssim E_z^{-1-\rho_1/2},    \\
 \int_{E_z}^\infty x^k f(x)  dx  &\leq&  E_{z}^{k-2}\int_{E_z}^\infty x^2 f(x) dx \lesssim   E_z^{k-2-\rho_1}
\end{eqnarray*}
  for $E_z>1$ and $k \in [0,2)$.

Choosing $a$, $b$ and $\kappa $ such that
$$
a  \geq (\beta+1)/\alpha, \quad b\geq 2\beta/(2+\rho_1),\quad  b( 1+\rho_1)-a-1\geq \beta \quad \kappa \geq b+ \beta,
$$
we  have that
$$ \int f  \log\left( \frac{ f }{ K_z*P} \right)  \1_{f > z^{A_1}  K_z*P }  \lesssim z^{-\beta}(\log z).$$

Similarly,
$$ \int f \left[\log\left( \frac{ f }{ K_z*P} \right)\right]^2\1_{f > z^{A_1}  K_z*P}  \lesssim z^{-\beta}(\log z)^2 ,$$
under the same constraints.

\end{proof}


\section{Some technical lemmas}
\begin{lemma}\label{L1:geps}
For all $\delta >0$, there exists $C>0$ such that for all $\epsilon_1, \epsilon_2$ satisfying $|\epsilon_1/\epsilon_2 -1 | < \delta$
\begin{equation*}
\begin{split}
\|g_{z,\epsilon_1}- g_{z, \epsilon_2}\|_1 \leq \sqrt{2\KL(g_{z,\epsilon_1}, g_{z,\epsilon_2})} \leq \sqrt{2z}\delta, \quad g_{z,\epsilon_2}(x)\leq g_{z,\epsilon_1}(x)e^{z\delta( 1 + x/\epsilon_1)}.
\end{split}
\end{equation*}

\end{lemma}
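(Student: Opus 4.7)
The plan is to establish the three claims separately, each of them essentially by direct computation, and tie them together at the end.

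First, the leftmost inequality $\|g_{z,\epsilon_1} - g_{z,\epsilon_2}\|_1 \leq \sqrt{2\KL(g_{z,\epsilon_1}, g_{z,\epsilon_2})}$ is just Pinsker's inequality, which holds for any pair of probability densities, so no work is required there. For the middle inequality, I would evaluate the KL divergence in closed form. Since both densities have the same shape parameter $z$, the log-ratio $\log(g_{z,\epsilon_1}(x)/g_{z,\epsilon_2}(x)) = -zx/\epsilon_1 + zx/\epsilon_2 + z\log(\epsilon_2/\epsilon_1)$ is affine in $x$, and using $\mathbb{E}_{g_{z,\epsilon_1}}[X] = \epsilon_1$ one obtains
\[
\KL(g_{z,\epsilon_1}, g_{z,\epsilon_2}) \;=\; z\left[\frac{\epsilon_1}{\epsilon_2} - 1 - \log\frac{\epsilon_1}{\epsilon_2}\right] \;=\; z\,\varphi(u), \qquad u := \epsilon_1/\epsilon_2.
\]
With $|u-1|<\delta$, a Taylor expansion yields $\varphi(u) = (u-1)^2/2 + O((u-1)^3)$; controlling the remainder (via the elementary bound $-w - \log(1-w) \leq w^2/(2(1-w))$ on the negative side and $v - \log(1+v)\leq v^2/2$ on the positive side) gives $\varphi(u) \leq \delta^2$ for $\delta$ bounded away from $1$, hence $\sqrt{2\KL}\leq \sqrt{2z}\,\delta$. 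This is where the constant $C$ in the statement presumably enters if $\delta$ is not small.

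For the pointwise bound, I would compute the ratio explicitly:
\[
\frac{g_{z,\epsilon_2}(x)}{g_{z,\epsilon_1}(x)} \;=\; \left(\frac{\epsilon_1}{\epsilon_2}\right)^{\!z}\exp\!\left(\frac{zx}{\epsilon_1} - \frac{zx}{\epsilon_2}\right) \;=\; \exp\!\left(-z(r-1)\frac{x}{\epsilon_1} + z\log r\right),
\]
with $r = \epsilon_1/\epsilon_2$. Using $|r - 1| < \delta$ we have $-(r-1)(x/\epsilon_1) \leq \delta(x/\epsilon_1)$, and the universal inequality $\log r \leq r - 1 \leq \delta$ bounds the remaining term by $\delta$. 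Adding them gives the exponent $z\delta(1 + x/\epsilon_1)$, as required.

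The only non-routine step is the estimate $\varphi(u)\leq \delta^2$, which is completely elementary but does constrain the admissible range of $\delta$ (or forces the constant $C$ in the lemma). In the places where the lemma is invoked in the paper, $\delta$ tends to zero with $n$ (e.g. $\delta = \varepsilon/\sqrt{2z}$ or $\delta = z^{-A+1}$), so this restriction is harmless.
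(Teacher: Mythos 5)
Your proof is correct and follows essentially the same route as the paper's: Pinsker's inequality for the first bound, a closed-form evaluation of $\KL(g_{z,\epsilon_1},g_{z,\epsilon_2})=z\,\varphi(\epsilon_1/\epsilon_2)$ with $\varphi(u)=u-1-\log u$ for the middle one, and a direct computation of the log-density ratio together with $\log r\le r-1$ for the pointwise bound. The one place you add value is in flagging that $\varphi(u)\le\delta^2$ genuinely requires $\delta$ bounded away from $1$; the paper's own proof invokes the bound $x-1-\log x\le |x-1|\,r/(1+r)\le r^2$ for $|x-1|\le r$, which fails for $x<1$ with $r$ near $1$ (e.g.\ $x=r=1/2$), so the "for all $\delta>0$" in the statement should really be "for all $\delta$ small enough." As you observe, this is harmless because every invocation of the lemma takes $\delta\to 0$, and the stray constant $C$ in the lemma statement (which never appears in the displayed inequalities) is most plausibly a leftover meant to absorb exactly this restriction.
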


\begin{proof}[Proof of Lemma \ref{L1:geps}]
Inequality
 $$
 ||g_{z,\epsilon_1}-g_{z,\epsilon_2}||_1 \leq \sqrt{2\KL(g_{z,\epsilon_1}, g_{z,\epsilon_2})}
 $$
 holds due the inequality  for the total variation distance to be upper bounded by $\sqrt{2}$ times the square root of the Kullback-Leibler distance between the corresponding probability distributions.

The Kullback-Leibler distance between two densities $g_{z, \epsilon_1}$ and $g_{z,\epsilon_2}$ is
\begin{eqnarray*}
\KL(g_{z, \epsilon_1}, g_{z,\epsilon_2}) &=& \int_0^{\infty} g_{z,\epsilon_1}(x) \log\left( \frac{g_{z, \epsilon_1}(x)}{g_{z,\epsilon_2}(x)}\right) dx\\
&=& \int_0^{\infty} g_{z, \epsilon_1}(x)  \left(zx [\epsilon_2^{-1} - \epsilon_1^{-1}] + z \log( \epsilon_2/\epsilon_1) \right) dx\\
&=&  z\left( \epsilon_1/\epsilon_2 - 1 -  \log(\epsilon_1/\epsilon_2) \right) \\
&\leq& z \delta^2
\end{eqnarray*}
due to condition $|  \epsilon_1/\epsilon_2-1|<\delta$ and inequality $x-1-\log x \leq |x-1| r/(1+r)\leq r^2$ if $|x-1|\leq r$.

Moreover, for any $x>0$,
\begin{equation*}
\begin{split}
\left| \log g_{z,\epsilon_2} (x) - \log g_{z,\epsilon_1}(x) \right|&=
 \left|zx [\epsilon_1^{-1} - \epsilon_2^{-1}] + z \log( \epsilon_1/\epsilon_2) \right| \\
 & \leq  \left|\frac{ zx}{ \epsilon_1}\delta  + z \log(  1+ \delta) \right|
 \end{split}
\end{equation*}
which implies that
\begin{equation*}
\begin{split}
  g_{z,\epsilon_2} (x)/ g_{z,\epsilon_1}(x)  &\leq  \exp \left\{\frac{ zx}{ \epsilon_1}\delta  + z \delta  \right\}
 \end{split}
\end{equation*}
which completes the proof.
\end{proof}


 \subsection{Properties of gamma densities}\label{sec:tech}

 In this section we present some technical computations which are used throughout the paper.
We first present some identities on mixtures of Gamma densities, together with tail inequalities
 \begin{lemma}\label{lem:tech1}
 Let $z>0$ and $x>0$, then
  \begin{equation} \label{I0}
 I_0(z,x)  :=  \int_0^\infty g_{z,\epsilon}(x)d\epsilon  = 1 + \frac{ 1 }{ z-1}
 \end{equation}
and for all $k \geq 0$
 \begin{equation}\label{Ik}
 I_k(z,x)  :=  \int_0^\infty (\epsilon - x)^k g_{z,\epsilon}(x)d\epsilon = \frac{ x^{k} z^z }{ \Gamma(z) } \int_0^\infty \frac{( u -1)^ke^{- z/u} }{ u^{z} }du
 \end{equation}
Moreover for all $\delta \in (0,1)$ there exists $c(\delta)>0$ such that for all $z $ large enough and  $u < 1-\delta $,
\begin{equation}\label{lb:gz}
\frac{ z^z e^{- z/u}}{ \Gamma(z)u^{z}} \leq  e^{ - c(\delta) z /u }
\end{equation}
and for all  $u >1+ \delta$
\begin{equation}\label{ub:gz}
\frac{ z^z e^{- z/u}}{ \Gamma(z)u^{z}} \leq  u^{-c(\delta)z }.
\end{equation}
 \end{lemma}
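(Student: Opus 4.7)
My plan is to handle the four statements in the order they appear, since the middle substitution sets up the bounds.

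For the closed form $I_0(z,x)=1+1/(z-1)$, I will plug in the definition of $g_{z,\epsilon}(x)$ and substitute $t=zx/\epsilon$ in the $\epsilon$-integral. This turns $\int_0^\infty \epsilon^{-z} e^{-zx/\epsilon}\,d\epsilon$ into $(zx)^{1-z}\Gamma(z-1)$; multiplying by the prefactor $x^{z-1}z^z/\Gamma(z)$ and using $\Gamma(z)=(z-1)\Gamma(z-1)$ yields $z/(z-1)$. This step is routine but requires $z>1$ for convergence. For the formula for $I_k$, I will make the substitution $u=\epsilon/x$ directly. Computing $g_{z,ux}(x)=z^z e^{-z/u}/(xu^z\Gamma(z))$ and $(\epsilon-x)^k=x^k(u-1)^k$, the Jacobian $d\epsilon = x\,du$ cancels one factor of $x$, producing exactly the claimed identity. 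This representation is convenient because the integrand is precisely the quantity whose pointwise bound is the next assertion of the lemma.

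For the tail bounds, the main tool is Stirling's formula $\Gamma(z)=\sqrt{2\pi/z}\,z^z e^{-z}(1+O(1/z))$, which gives
\[
\frac{z^z e^{-z/u}}{\Gamma(z)u^z}=\frac{\sqrt{z}}{\sqrt{2\pi}}\,e^{-z h(u)}\,(1+O(1/z)), \qquad h(u):=\frac{1}{u}+\log u-1.
\]
Note $h(1)=0$ and $h'(u)=(u-1)/u^2$, so $h>0$ away from $u=1$. For $u\le 1-\delta$ I will study $\varphi(u):=u\,h(u)=1+u\log u-u$, whose derivative is $\log u<0$ on $(0,1)$; hence $\varphi$ is strictly decreasing and $\varphi(u)\ge\varphi(1-\delta)=\delta+(1-\delta)\log(1-\delta)>0$. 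Therefore $h(u)\ge\varphi(1-\delta)/u$, and since $z/u\ge z$, the $\sqrt{z}$ prefactor can be absorbed into a fraction of the exponent for $z$ large, yielding the bound with $c(\delta)=\varphi(1-\delta)/2$. For $u\ge 1+\delta$ I will look instead at $\psi(u):=h(u)/\log u$: it is continuous and strictly positive on $[1+\delta,\infty)$ with $\lim_{u\to\infty}\psi(u)=1$, so its infimum $c_0(\delta)>0$ is attained. Then $h(u)\ge c_0(\delta)\log u$ and $e^{-zh(u)}\le u^{-c_0(\delta)z}$; since $\log u\ge\log(1+\delta)>0$, the $\sqrt{z}$ prefactor is absorbed as before, giving the bound with $c(\delta)=c_0(\delta)/2$.

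The main obstacle I anticipate is not any single calculation but the bookkeeping: I must verify positivity of $\varphi(1-\delta)$ and $c_0(\delta)$ (elementary one-variable calculus), and I must ensure the $\sqrt{z}$ factor plus the Stirling remainder can be uniformly absorbed into the exponent by taking $z$ larger than a threshold depending only on $\delta$. With those verifications in place the rest of the proof is straightforward gamma-function manipulation and the convexity of $h$.
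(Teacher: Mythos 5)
Your proof is correct and follows the paper's own approach: the same substitutions to evaluate $I_0$ and $I_k$, and the same Stirling expansion $\Gamma(z)^{-1}z^z e^{-z/u}u^{-z} = \sqrt{z/(2\pi)}\,e^{-zh(u)}(1+O(1/z))$ with $h(u)=1/u+\log u -1$ for the two tail bounds. You additionally spell out the elementary calculus (monotonicity of $\varphi(u)=uh(u)$ on $(0,1)$ and positivity of the infimum of $\psi(u)=h(u)/\log u$ on $[1+\delta,\infty)$) that the paper leaves implicit when it states the Stirling form ``leads to'' \eqref{lb:gz} and \eqref{ub:gz}.
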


\begin{proof}[Proof of Lemma \ref{lem:tech1}]
We have
  \begin{equation*}
 \begin{split}
 I_0(z,x) & :=  \int_0^\infty g_{z,\epsilon}(x)d\epsilon  = \frac{ x^{z-1} z^z }{ \Gamma(z) } \int_0^\infty e^{ -zx/\epsilon}\frac{ 1}{ \epsilon^{(z-1)+1 }}d\epsilon \\
 &= \frac{ x^{z-1} z^z }{ \Gamma(z) } (zx)^{- ( z-1)}\Gamma(z-1)  = \frac{ z }{ z-1 }  = 1 + \frac{ 1 }{ z-1},
 \end{split}
 \end{equation*}
 which proves \eqref{I0}. For all $k \geq 0$,
 \begin{equation*}
 \begin{split}
 I_k(z,x) & :=  \int_0^\infty (\epsilon - x)^k g_{z,\epsilon}(x)d\epsilon
           = \frac{ x^{k-1} z^z }{ \Gamma(z) } \int_0^\infty \left(\frac{\epsilon}{x} -1 \right)^ke^{ -zx/\epsilon}\frac{ x^{(z-1)+1} }{ \epsilon^{(z-1)+1 }}d\epsilon \\
 &= \frac{ x^{k} z^z }{ \Gamma(z) } \int_0^\infty \frac{( u -1)^ke^{- z/u} }{ u^{z} }du
 \end{split}
 \end{equation*}
and \eqref{Ik} is verified.
Now, note that when $z$ is large
\begin{equation} \label{gaussian:approx}
\begin{split}
\frac{ z^z e^{- z/u}}{ \Gamma(z)u^{z}} &= \frac{\sqrt{z} \exp\left( - z [ \log u +1/u -1] \right) }{ \sqrt{2\pi}} \left( 1 + R(z) \right)^{-1} \\
&= \frac{\sqrt{z} \exp\left( - \frac{z}{2} (1- u)^2( 1 + o(1))  \right) }{ \sqrt{2\pi}}\left( 1 + R(z) \right)^{-1}
\end{split}
\end{equation}
where $R(z) =O(1/z)$ is the remainder term of the Stirling formula. When $u< 1-\delta$ the first inequality leads to \eqref{lb:gz}  while when $u > 1+\delta$ it leads to \eqref{ub:gz}.
\end{proof}

From Lemma \ref{lem:tech1}, we can deduce the following approximations:
\begin{lemma}   \label{lem:tech2}
 For all $k\geq 0$ and $x>0$,
\begin{equation}\label{Ik}
I_k(z,x) =\frac{  x^k }{ z^{k/2} }  \left( 1 + R(z) \right)^{-1}  \left(  \mu_k  + O(z^{-H })\right) := \frac{  x^k }{ z^{k/2} } \mu_{k}(z) , \quad \forall H >0,
\end{equation}
where $\mu_k = \int_{\R} x^k \varphi(x) dx$ with $\varphi$ the density of a standard  Gaussian random variable. We also have
\begin{equation}\label{eq:Kzf}
K_z f(x)  =  \sum_{j=0}^r \frac{f^{(j)}(x)x^j }{ j ! z^{j/2}}  \mu_{j}(z)  + z^{-\beta/2} R_z(x)
\end{equation}
where
$$
| R_z(x)| \leq C_{\beta,z} L(x) x^{\beta}   \left[ 1 + \frac{  x^{\gamma} }{ z^{ \gamma/2 }} \right].
$$
For all  $g(x)  \leq C_1  + C_2 x^a $ for some $a>0$, then
\begin{equation}\label{ineq:KR}
K_zg(x)    \leq 2 C_1 + 2C_2 x^a,
\end{equation}
for $z$ large enough and $a$ fixed.
\end{lemma}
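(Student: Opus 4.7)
The plan is to build all three conclusions out of the integral representation \eqref{Ik} from Lemma \ref{lem:tech1} together with the Gaussian-type identity \eqref{gaussian:approx}. In each case the main tool is the change of variables $v = \sqrt{z}(u-1)$, which compresses the exponential decay of $e^{-z h(u)}$ (with $h(u)=\log u+1/u-1$) into a near-Gaussian kernel $\tfrac{1}{\sqrt{2\pi}} e^{-z h(1+v/\sqrt{z})}\, dv$.

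For \eqref{Ik}, I would start from $I_k(z,x) = \frac{x^k z^z}{\Gamma(z)}\int_0^\infty (u-1)^k e^{-z/u} u^{-z} du$ and rewrite the integrand via \eqref{gaussian:approx} as $\tfrac{\sqrt{z}}{\sqrt{2\pi}} (1+R(z))^{-1} e^{-z h(u)}$ with $R(z)=O(1/z)$. After the substitution, this becomes $\tfrac{x^k}{z^{k/2}}(1+R(z))^{-1}\int_{-\sqrt{z}}^{\infty} v^k\, \tfrac{e^{-z h(1+v/\sqrt{z})}}{\sqrt{2\pi}}\, dv$. I would split the integral at $|v|= M\sqrt{\log z}$: on the tail, the bounds \eqref{lb:gz}--\eqref{ub:gz} force the contribution to decay like $e^{-cM^2\log z}$, which is $O(z^{-H})$ for any $H>0$ once $M$ is large enough; on the central region, the expansion $z h(1+v/\sqrt{z})=v^2/2 - v^3/(3\sqrt z)+\cdots$ gives an expansion of the integrand around $v^k e^{-v^2/2}/\sqrt{2\pi}$, whose leading coefficient is exactly $\mu_k$. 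Absorbing the (polynomially small) correction terms into $\mu_k(z)$ then yields the stated form.

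For \eqref{eq:Kzf}, I would Taylor expand $f(\epsilon)=\sum_{j=0}^{r} f^{(j)}(x)(\epsilon-x)^j/j! + \tilde R(\epsilon,x)$ using the H\"older condition \eqref{cond:taylor}, so that $|\tilde R(\epsilon,x)|\leq L(x)|\epsilon-x|^{\beta}(1+|\epsilon-x|^{\gamma})$ on $|\epsilon-x|\leq \Delta$. Integrating term by term against $g_{z,\epsilon}(x)\, d\epsilon$ and applying \eqref{Ik} to each monomial produces the main sum $\sum_{j=0}^r \frac{f^{(j)}(x)x^j}{j! z^{j/2}}\mu_j(z)$. The residual $R_z(x)$ is then controlled by reusing the same Gaussian change-of-variables argument, now for non-integer exponents: $\int|\epsilon-x|^\beta g_{z,\epsilon}(x)\, d\epsilon\lesssim x^\beta z^{-\beta/2}$ and similarly $\int|\epsilon-x|^{\beta+\gamma}g_{z,\epsilon}(x)\,d\epsilon\lesssim x^{\beta+\gamma}z^{-(\beta+\gamma)/2}$, which combine to give $|R_z(x)|\lesssim L(x)x^\beta(1+x^\gamma/z^{\gamma/2})$. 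The region $|\epsilon-x|>\Delta$ is absorbed in the same tail bound of size $O(z^{-H})$ that was used in the first part.

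For \eqref{ineq:KR}, I would simply split $K_z g(x)\leq C_1 I_0(z,x)+C_2\int_0^\infty \epsilon^a g_{z,\epsilon}(x)\,d\epsilon$. The first term equals $C_1 z/(z-1)\leq 2C_1$ by \eqref{I0} for $z$ large. The second is a direct computation: substituting $t=zx/\epsilon$ yields $\int_0^\infty \epsilon^a g_{z,\epsilon}(x)\,d\epsilon = x^a z^{a+1}\Gamma(z-a-1)/\Gamma(z)$, and Stirling gives $z^{a+1}\Gamma(z-a-1)/\Gamma(z)=1+O(1/z)$, so this is at most $2x^a$ for $z$ large, yielding the bound. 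The main obstacle is in \eqref{eq:Kzf}: I expect the book-keeping of the remainder $R_z(x)$ to be the delicate step, as it requires a uniform-in-$x$ version of the Gaussian estimate with fractional exponent $\beta$, taking some care in the region $\epsilon\ll x$ where $u=\epsilon/x$ is small and the Gaussian approximation in \eqref{gaussian:approx} must be replaced by the exponential bound \eqref{lb:gz}.
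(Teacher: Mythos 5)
Your proposal follows essentially the same route as the paper's proof: \eqref{eq:Kzf} comes from Taylor-expanding $f$ around $x$ against the moment integrals $I_j(z,x)$ and bounding the remainder through $I_\beta$ and $I_{\beta+\gamma}$, and \eqref{ineq:KR} is the same direct Gamma-integral computation that the paper uses. For \eqref{Ik} the paper's proof says only that Lemma~\ref{lem:tech1} implies the asymptotics and omits the Laplace-method argument; your change of variables $v=\sqrt z(u-1)$ with the split at $|v|=M\sqrt{\log z}$ supplies exactly that missing step.

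One genuine gap, though: your argument does not (and cannot) establish the ``$\forall H>0$'' in the displayed estimate for $I_k$. The tail beyond $|v|=M\sqrt{\log z}$ is indeed $O(z^{-H})$ for any fixed $H$ once $M$ is large, but the central region produces corrections of genuine order $z^{-1/2}$, since the next term in $zh(1+v/\sqrt z)=v^2/2-\tfrac23 v^3/\sqrt z+\cdots$ has coefficient $h'''(1)/3!=-2/3$ (not $-1/3$ as you wrote; $h'''(1)=-4$). Thus what you actually prove is $\mu_k(z)=(1+R(z))^{-1}(\mu_k+O(z^{-1/2}))$, and this is the true rate: the same substitution $t=zx/\epsilon$ used in the computation of $I_0$ gives $\int_0^\infty \epsilon\, g_{z,\epsilon}(x)\,d\epsilon = xz^2/((z-1)(z-2))$, hence $I_1(z,x)=2xz/((z-1)(z-2))$ and $\mu_1(z)=2z^{3/2}/((z-1)(z-2))\asymp z^{-1/2}$, which is not $O(z^{-H})$ for any $H>1/2$. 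So the phrase ``absorbing the (polynomially small) correction terms into $\mu_k(z)$'' should not be read as yielding the arbitrary-polynomial decay claimed in the lemma (and used again as ``$\mu_1(z)=O(z^{-H})$'' in the proof of Proposition~\ref{lem:approx:1}); the downstream argument only needs $\mu_1(z)/\sqrt z\lesssim z^{-1}\lesssim z^{-\beta/2}$ for $\beta\le 2$, and your correct $O(z^{-1/2})$ estimate suffices for that, but the stated $\forall H>0$ is an overclaim that your proof rightly does not reproduce.
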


\begin{proof}[Proof of Lemma \ref{lem:tech2}]
Lemma \ref{lem:tech1} implies that
\begin{equation}\label{eq:Kzf}
K_z f(x)  =  \sum_{j=0}^r \frac{f^{(j)}(x)}{ j !}  I_j(z,x)  + R_z(x) =  \sum_{j=0}^r \frac{f^{(j)}(x)x^j }{ j ! z^{j/2}}  \mu_{j}(z)  + z^{-\beta/2} R_z(x)
\end{equation}
where
$$
| R_z(x)|\leq C_{\beta} L(x) z^{\beta/2}[ I_{\beta}(z,x) + I_{\beta+\gamma}(z,x)] \leq C_{\beta,z} L(x) x^{\beta}   \left[ 1 + \frac{  x^{\gamma} }{ z^{ \gamma/2 }} \right].
$$

Also if $g(x) $ be a function bounded by $C_1  + C_2 x^a $ for some $a>0$, then
\begin{equation*}
\begin{split}
K_z g(x) &\leq C_1 I_0(z) + C_2  \frac{ x^{z-1} z^z }{ \Gamma(z) } \int_0^\infty e^{ -zx/\epsilon}\frac{ 1}{ \epsilon^{(z-a-1)+1 }}d\epsilon \\
&\leq 2 C_1 + C_2 \frac{ x^{a} \Gamma(z-a-1)z^z }{ \Gamma(z) z^{z-a-1}}   \leq 2 C_1 + 2C_2 x^a,
\end{split}
\end{equation*}
for $z$ large enough and $a$ fixed.
\end{proof}

 \subsection{Examples of functions in $\mathcal P_\alpha ( \beta, L(\cdot), \gamma, C_0, C_1, e, \Delta )$}\label{sec:VerifCond}

In this section we verify conditions in Remark~\ref{rk:functionalclass}. 

In Remark~\ref{rk:functionalclass} we state that moment condition \eqref{moment:func} is satisfied for Weibull distribution $f(x) = C x^{\alpha-1} e^{-c x^{b}}$ with $\alpha, b,c > 0$; for folded Student t distribution $f(x) = c_\nu (1+x^2)^{-(\nu+1)/2}$, $x>0$; for the Frechet-type distributions $f(x)= b x^{-b-1} e^{-x^{-b}}$, $b>0$.


 \subsubsection{Weibull distribution}

Consider Weibull distribution with density $f(x) = C_{a, b} x^{a-1} e^{- x^{b}}$ with $a, b>0$. Assume first that $a=\alpha \in (0,1]$ and  $b\geq 1$, then  $\ell(x) = \log h(x) = - x^{b} +\log C_{\alpha, b}$ which is infinitely differentiable. Take some integer $r\geq 0$ such that $b-r \in (0,1]$ then
$\ell^{(r)}(x)= -(b)_r x^{b-r}$ where $(x)_r= x(x-1)\ldots (x-r+1)$ and $\beta>r$. We need to check that for $j=1,\ldots,r$,
\begin{gather*}
 \int_0^\infty  \left[x^{j} |\ell^{(j)}(x)| \right]^{(2\beta+e)/j} f(x) dx
 \lesssim   \int_0^\infty [ x^{b} + x^{j}]^{(2\beta+e)/j} x^{a-1} e^{- x^{b}} dx  \\
\lesssim  \int_0^\infty x^{b(2\beta+e)/j+a-1} e^{- x^{b}} dx  + \int_0^\infty x^{(2\beta+e)+a-1} e^{- x^{b}} dx= [z=x^b] \\
\lesssim   \int_0^\infty z^{(2\beta+e)/j+a/b-1} e^{- z} dz  + \int_0^\infty z^{(2\beta+e +a)/b-1} e^{- z} dz
\end{gather*}
which is finite since $b(2\beta+e)/j+a >0$ for $j=1,2,\ldots,r$.


Since  $b-r \in (0,1]$,
\begin{gather*}
|\ell^{(r)}(x+y) - \ell^{(r)}(x)| = (b)_r |(x+y)^{b-r} - x^{b-r}| 
 \leq (b)_r |y|^{b-r}
\end{gather*}
due to inequality $|z^A-w^A|\leq |z-w|^A$ for $A \in (0,1]$.  Here  $L_{\log}(x) = (b)_r $, $\beta=b$ and $\gamma=0$.

It is sufficient to check that
\begin{gather*}
 \int_0^\infty  \left[(x^{\beta}+x^{2\beta})\right]^{2} f(x) dx
 \lesssim    \int_0^\infty  (x^{2b+a-1}+x^{4b+a-1}) e^{- x^{b}} dx \\
\lesssim  \int_0^\infty [  z^{a/b+1}+ z^{a/b+3}] e^{- z} dz<\infty
 \end{gather*}
which holds.

For $a>1$, then we can take $\alpha=1$, and the corresponding  Weibull  density belongs to $\mathcal P( \beta, L(\cdot), \gamma, C_0, C_1, e, \Delta)$ due to the first part of Remark~\ref{rk:functionalclass}.

\subsubsection{Folded Student t distribution}

Now we take folded Student t distribution $f(x) = c_\nu (1+x^2)^{-(\nu+1)/2}$, $x>0$. Then $\alpha=1$ and $\ell(x) = -0.5(\nu+1) \log(1+x^2) $, and the
derivatives for large $x$ are 
\begin{gather*}
 \ell'(x) = - (\nu+1) x (1+x^2)^{-1}, \quad  \ell''(x) =  - (1+x^2)^{-1} + 2x^2(1+x^2)^{-2},\\
 \ell^{(2j)}(x) =  \sum_{s=0}^{j} c_{2j,s} \frac{x^{2s}}{ (1+x^2)^{j+s}},\quad
 \ell^{(2j+1)}(x) =  \sum_{s=0}^{j} c_{2j+1,s} \frac{x^{1+2s}} {(1+x^2)^{1+j+s}},
\end{gather*}
which is easy to prove by induction.
 Note that for any positive integer $k$, $| \ell^{(k)}(x)| \lesssim (1+x^2)^{-k/2}$.

Hence, for even derivatives,
\begin{gather*}
\int_0^\infty [x^{2j} |\ell^{(2j)}(x)|]^{(2\beta+e)/(2j)} f(x) dx  \lesssim
\sum_{s=0}^{j}
\int_0^\infty \left( \frac{x^2 }{(1+x^2)}\right)^{(1+s/j)(\beta+e/2) } f(x) dx
\end{gather*}
which is finite. Similarly, for odd derivatives,
\begin{gather*}
\int_0^\infty [x^{2j+1} |\ell^{(2j+1)}(x)|]^{(2\beta+e)/(2j+1)} f(x) dx  \\
\lesssim
\sum_{s=0}^{j}
\int_0^\infty \left( \frac{x^2 }{(1+x^2)}\right)^{(2\beta+e)(1+s+j)/(1+2j)} f(x) dx
<\infty.
\end{gather*}
Case $r=0$:
\begin{eqnarray*}
&|\ell(x+y) - \ell(x)| =  0.5(\nu+1) |\log\left(\frac{1+(x+y)^2}{1+x^2}\right)|\\
&\leq 0.5(\nu+1)A^{-1}\left[\frac{y (2x+y)}{1+x^2}\right]^A I(y>0) + 0.5(\nu+1)A^{-1}\left[\frac{|y| (2x-|y|)}{1+(x-|y|)^2}\right]^A I(y<0)\\
&\leq 0.5(\nu+1)A^{-1}\left[ \frac{2|y|(1+|y|)}{(1+x^2)^{1/2}} \right]^A I(y>0) + 0.5(\nu+1)A^{-1}\left[ 2  |y|(1+|y|)  \right]^A I(y<0)\\
&\lesssim |y|^A  (1+|y|^A)
\end{eqnarray*}
using inequality $\log(1+x)\leq x^A/A$ for any $x\geq 0$ and any $A>0$. Then, $\beta = A$ for $A\in (0,1]$, $\gamma=\beta=A$ and $L_{\ell}(x)= C$. 
Condition $\int_0^\infty x^{2\beta }(1+x^{2\gamma})L_{\ell}^2(x) f(x) dx<\infty$ holds if
\begin{gather*}
 \int_0^\infty x^{2A }(1+x^{2A }) (1+x^2)^{-(\nu+1)/2} dx
 \leq C+\int_1^\infty x^{4A} (1+x^2)^{-(\nu+1)/2} dx<\infty
\end{gather*}
i.e. if $\beta=A<\nu/4$ (here $r_0=0$).  

Now fix a positive integer $r$. Since
\begin{eqnarray*}
|\ell^{(r)}(x+y) - \ell^{(r)}(x)| &\leq& |y| \sup_{z \in \langle x, x+y\rangle} |\ell^{(r+1)}(z)| \lesssim |y| \sup_{z \in \langle x, x+y\rangle} (1+z^2)^{-(r+1)/2} \\
&\lesssim& |y| (1+x^2)^{-(r+1)/2} I(y>0) + |y| I(y<0) \lesssim |y|.
\end{eqnarray*}
Therefore, for any integer $r\geq 1$, the first condition is satisfied with $\beta=r+1$, $L_{\ell}(x)=C $ and $\gamma=0$.
 Condition $\int_0^\infty x^{2\beta }(1+x^{4r_0+2\gamma})L_{\ell}^2(x) f(x) dx<\infty$ holds if $\beta=r+1 < \nu/2$ and since $\beta=r+1 \geq 2$, we also need $2\beta+4r_0 <\nu$. Since $r_0=\lceil \beta/2\rceil-1<\beta/2$ and $\beta$ is an integer, we can write this condition as $\beta=r+1$ where $a_r<\nu$ where for even $r=2k$ $a_{r}  = 4r+2$ and for odd $r=2k+1$ $a_{r} = 4r$. For instance, $a_1=4$, $a_2 = 10$, $a_3=12$, $a_4 = 18$ etc.


Therefore, the conditions on $\beta$ and $L_\ell(x)$ given $\nu$ can be summarised as follows: $L_\ell(x)=C$  and
\begin{itemize}
\item  $\nu \in [1,4]$: $\beta<\nu/4$,  $\gamma=\beta$.


\item $\nu \in (a_{r},a_{r+1}]$: $\beta=r+1$,  $\gamma=0$.

\end{itemize}

\subsubsection{Frechet distribution}

Consider a Frechet-type distribution with density $f(x) = c_b x^{-b-1} e^{-x^{-b}}$, $x>0$, for some $b>0$. This density does not belong to a logarithmic  H\"older class. For simplicity we consider a bound of the type $|f(x)-f(x+y)|\leq L(x) |y|^\beta (1+|y|^\gamma)$ with $r=0$, i.e. with $\beta \leq 1$.
Hence, for $|y|\leq \Delta$, $x>0$ and $x>-y$,
\begin{gather}
| x^{-b-1} e^{-x^{-b}}-(x+y)^{-b-1} e^{-(x+y)^{-b}}| \leq |y| \sup_{z \in \langle x, x+y\rangle} [(b+1)z^{-b-2} + bz^{-2b-2}] e^{-z^{-b}}\notag\\
  \leq  (b+1) |y| \left[ \sup_{z \in \langle x, x+y\rangle}  z^{-b-2} e^{-z^{-b}} +  \sup_{z \in \langle x, x+y\rangle}  z^{-2b-2} e^{-z^{-b}}\right]. \label{eq:Frechet2Terms}
\end{gather}
For $a=b+2$ and for $a=2$, consider
 $\sup_{z \in \langle x, x+y\rangle}  z^{-b-a} e^{-z^{-b}}$.
Function $z^{-b-a} e^{-z^{-b}}$ achieves the maximum on the whole semiline at $x^\star_a = (1+a/b)^{-1/b}$. Hence, if $x^\star_a \in \langle x, x+y\rangle$ then the supremum is achieved at this point. If $\min(x,x+y) >  x^\star_a$ then the supremum is achieved at $\min(x,x+y)$, and if $\max(x,x+y)<x^\star_a$ then the supremum is achieved at $\max(x,x+y)$.

\begin{enumerate}
\item $\max(x,x+y) <  x^\star_a$. If $y\leq 0$ then the condition is $x <  x^\star_a$ and the supremum is $x^{-b-a} e^{-x^{-b}}$.
 If $y>0$, then the supremum is
\begin{gather*}
\sup_{z \in \langle x, x+y\rangle}  z^{-b-a} e^{-z^{-b}} = (x+y)^{-b-a} e^{-(x+y)^{-b}}\\ \leq x^{-b-a} e^{-x^{-b}} (1+  |y| ) \max(1, 2 b x^{-b-1})
\end{gather*}
using inequality
\begin{equation}\label{eq:IneqFrechet}
e^{x^{-b}-(x+y)^{-b} }\leq 1+ 2b |y| x^{-b-1}.
\end{equation}

We can unite the upper bound as $x^{-b-a} e^{-x^{-b}} (1+  |y| ) \max(1, 2 b x^{-b-1})$.

\item $\min(x,x+y) >  x^\star_a$. If $y\geq 0$ then the condition is $x >  x^\star_a$ and the supremum is $x^{-b-a} e^{-x^{-b}}$.
 If $y<0$ then
\begin{eqnarray*}
\sup_{z \in \langle x, x+y\rangle}  z^{-b-a} e^{-z^{-b}} = (x+y)^{-b-a} e^{-(x+y)^{-b}} \leq (x-\Delta)^{-b-a} e^{-x^{-b}}
\end{eqnarray*}
We can unite the upper bound as $(x-\Delta)^{-b-a} e^{-x^{-b}}$.

\item $\min(x,x+y) \leq  x^\star_a \leq \max(x,x+y)$, that is, $|x-x^\star_a|\leq |y|\leq \Delta$. Let's write the supremum as a function of $x$, $y$ and $\Delta$:
\begin{eqnarray*}
\sup_{z \in \langle x, x+y\rangle}  z^{-b-a} e^{-z^{-b}} &=& {x_a^\star}^{-b-a} e^{-{x_a^\star}^{-b}} \leq (x - \Delta)^{-b-a} e^{-{(x+|y|)}^{-b}}\\
&\leq& (x - \Delta)^{-b-a} e^{- x^{-b}}[1+   |y| ]\max(1, 2b x^{-b-1})
\end{eqnarray*}
using \eqref{eq:IneqFrechet}.

\end{enumerate}
To apply the bounds to the cases $a=2$ and $a=b+2$, note that $\max(x,x+y)<  x^\star_a$ holds if  $x <  x^\star_a-\Delta$; and it holds for both values of $a$ if $x <  x^\star_{b+2}-\Delta$ since $x^\star_a$ decreases in $a$.

Then, the upper bound in \eqref{eq:Frechet2Terms} can be written as
\begin{gather*}
| x^{-b-1} e^{-x^{-b}}-(x+y)^{-b-1} e^{-(x+y)^{-b}}| \leq  (b+1) |y|  e^{- x^{-b}}[1+   |y| ] \times \\
 \times \max(1, 2b x^{-b-1})\left[(x - \Delta)^{-b-a} I(x>x^\star_{b+2}-\Delta) + x^{-b-a} I(x < x^\star_{b+2}-\Delta) \right],
\end{gather*}
i.e. $\gamma=1$ and
\begin{eqnarray*}
L(x) &=& (b+1) e^{- x^{-b}} \max(1, 2b x^{-b-1})\\
&& \times \sum_{a \in \{2, b+2\}}\left[(x - \Delta)^{-b-a} I(x>x^\star_{b+2}-\Delta) + x^{-b-a} I(x < x^\star_{b+2}-\Delta) \right].
\end{eqnarray*}
Now we check the integrability condition:
\begin{eqnarray*}
&&\int_0^\infty \left(  x^{\beta}(1+x)  L(x) \right)^2 f(x) dx
\leq 2\int_0^\infty \left(  x^{\beta}(1+x)    \max(1, x^{-b-1}) \right)^2 \\
&& \times \sum_{a \in \{2, b+2\}}\left[(x - \Delta)^{-b-a} x^{b+1} I(x>x^\star_{b+2}-\Delta) + x^{-b-a} I(x < x^\star_{b+2}-\Delta) \right]^2 f(x) dx\\
&\leq& 8\sum_{a \in \{2, b+2\}}\int_{x^\star_{b+2}-\Delta}^\infty \left[  x^{\beta}    \max(x, x^{-b-1}) (x - \Delta)^{-b-a} x^{b+1}\right]^2 c_b x^{-b-1} e^{-x^{-b}} dx\\
& &+ 8\sum_{a \in \{2, b+2\}}\int_0^{x^\star_{b+2}-\Delta} \left[  x^{\beta}   \max(x, x^{-b-1})  x^{1-a}  \right]^2 c_b x^{-b-1} e^{-x^{-b}} dx\\
&\leq& 8\sum_{a \in \{2, b+2\}} \int_0^{x^\star_{b+2}-\Delta}    x^{2(\beta-b-a)-1}   c_b x^{-b-1} e^{-x^{-b}} dx\\
&&+8\sum_{a \in \{2, b+2\}}\int_{x^\star_{b+2}-\Delta}^{\infty} \left(  (x - \Delta)^{-b-a} x^{\beta+b+1} \right)^2 c_b x^{-b-1} e^{-x^{-b}} dx
\end{eqnarray*}
The first integral is finite. The second integral is finite if $ -2a+2\beta +4 <b$ for $a=2$ and for $a=b+2$, i.e. for   $ \beta   <b/2+1$ which holds for any $\beta \in(0,1]$.

\section*{Acknowledgements}
The authors are grateful to the Royal Society for financial support for mutual visits (International Exchange Grant IE140183), and to Oleg Lepski for fruitful discussions about the functional classes.


\end{document}